\documentclass[journal]{IEEEtran}
\usepackage[latin9]{inputenc}
\usepackage{float}
\usepackage{amsmath}
\usepackage{amssymb}
\usepackage{graphicx}
\usepackage{color}
\usepackage{url}
\usepackage[nospace,compress]{cite}

\makeatletter
\floatstyle{ruled}
\newfloat{algorithm}{tbp}{loa}
\providecommand{\algorithmname}{Algorithm}
\floatname{algorithm}{\protect\algorithmname}
%%%%%%%%%%%%%%%%%%%%%%%%%%%%%% User specified LaTeX commands.
\usepackage{amsbsy}\usepackage{epsfig}
\usepackage{subcaption}

\newcommand{\beq}{\begin{equation}}
\newcommand{\eeq}{\end{equation}}
\def\bv{\mbox{\boldmath $v$}}

\def\bu{\mbox{\boldmath $u$}}

\def\bh{\mbox{\boldmath $h$}}

\def\bv{\mbox{\boldmath $v$}}
\def\bw{\mbox{\boldmath $w$}}

\def\bx{\mbox{\boldmath $x$}}

\def\bs{\mbox{\boldmath $s$}}

\def\bL{\mbox{\boldmath $L$}}

\def\mLambda{\mbox{$\mathbf{\Lambda}$}}
\def\mSigma{\mbox{$\mathbf{\Sigma}$}}
\def\mTheta{\mbox{$\mathbf{\Theta}$}}
\def\ma{\mbox{$\mathbf{a}$}}
\def\mA{\mbox{$\mathbf{A}$}}
\def\mB{\mbox{$\mathbf{B}$}}

\def\mD{\mbox{$\mathbf{D}$}}

\def\mg{\mbox{$\mathbf{g}$}}
\def\mLambda{\mbox{$\mathbf{\Lambda}$}}
\def\mSigma{\mbox{$\mathbf{\Sigma}$}}
\def\mF{\mbox{$\mathbf{F}$}}

\def\mJ{\mbox{$\mathbf{J}$}}
\def\mI{\mbox{$\mathbf{I}$}}
\def\mL{\mbox{$\mathbf{L}$}}
\def\mY{\mbox{$\mathbf{Y}$}}
\def\mQ{\mbox{$\mathbf{Q}$}}
\def\mZ{\mbox{$\mathbf{Z}$}}
\def\mR{\mbox{$\mathbf{R}$}}
\def\mS{\mbox{$\mathbf{S}$}}
\def\mT{\mbox{$\mathbf{T}$}}
\def\mP{\mbox{$\mathbf{P}$}}
\def\mU{\mbox{$\mathbf{U}$}}
\def\mV{\mbox{$\mathbf{V}$}}

\def\my{\mbox{$\mathbf{y}$}}
\def\mX{\mbox{$\mathbf{X}$}}
\def\mx{\mbox{$\mathbf{x}$}}
\def\mW{\mbox{$\mathbf{W}$}}

\newcommand{\ds}{\displaystyle}

\def\tr{\, \mbox{trace} }

\newtheorem{theorem}{Theorem}
\newtheorem{proposition}{Proposition}
\newtheorem{definition}{Definition}

\newenvironment{proof}[1][Proof]{\noindent \textbf{#1.} }{\qedsymbol}
\newcommand{\qedsymbol}{\hspace{\fill}\rule{1.5ex}{1.5ex}}

\makeatother

\begin{document}

\title{On the Graph Fourier Transform\\ for Directed Graphs\vspace{.2cm}}

\author{Stefania Sardellitti,~\IEEEmembership{Member,~IEEE}, Sergio Barbarossa,~\IEEEmembership{Fellow,~IEEE}, and Paolo Di Lorenzo,~\IEEEmembership{Member,~IEEE} \\
\thanks{S. Sardellitti and S. Barbarossa are with Sapienza University of Rome, DIET Dept., Via Eudossiana 18, 00184 Rome, Italy (e-mail: stefania.sardellitti@uniroma1.it, sergio.barbarossa@uniroma1.it). P. Di Lorenzo is with the Dept. of Engineering, University of Perugia, Via G. Duranti 93, 06125 Perugia, Italy (e-mail: paolo.dilorenzo@unipg.it).
This work has been
supported by TROPIC Project, Nr. ICT-318784.
The work of P. Di Lorenzo was funded by the ``Fondazione Cassa di Risparmio di Perugia''.  Matlab code to implement the algorithms proposed in this paper is available at https://sites.google.com/site/stefaniasardellitti/code-supplement} \vspace{-.63cm}}

\maketitle

\begin{abstract}
The analysis of signals defined over a graph is relevant in many applications, such as social and economic networks, big data or biological networks, and so on. A key tool for analyzing these signals is the so called Graph Fourier Transform (GFT). Alternative definitions of GFT have been suggested in the literature, based on the eigen-decomposition of either the graph Laplacian or adjacency matrix. In this paper, we address the general case of directed graphs and we propose an alternative approach that builds the graph Fourier basis as the set of orthonormal vectors that minimize a continuous extension of the graph cut size, known as the Lov\'{a}sz extension. To cope with the non-convexity of the problem, we propose two alternative iterative optimization methods, properly devised for handling orthogonality constraints. Finally,  we extend the method to minimize  a continuous relaxation of the   balanced cut size. The formulated problem is again non-convex and we propose an efficient solution method based on an explicit-implicit gradient algorithm.
\end{abstract}

\begin{IEEEkeywords}
Graph signal processing, Graph Fourier Transform, total variation, clustering.
\end{IEEEkeywords}

\IEEEpeerreviewmaketitle

\section{Introduction}

Graph signal processing (GSP) has attracted a lot of interest in the last years because of its many potential
applications, from  social and economic networks to smart grids, gene regulatory  networks,  and so on.
GSP represents a promising tool for the representation,   processing and analysis of  complex networks, where discrete signals are defined on the vertices of a (possibly weighted) graph.
Many works in the recent literature attempt to extend the classical discrete signal processing (DSP) theory from
time signals or images to signals defined over the vertices of a graph by introducing  the basic concepts of graph-based filtering \cite{Moura2013, Narang2012, Narang2013}, graph-based transforms  \cite{Moura2014, Shuman2013, Hammond, NarangOrtega}, sampling and uncertainty principle \cite{Pesenson2008, Agaskar, Tsit_Barb_PDL, Tsit_Barb, Chen_Varma}.
A central role in GSP is played by the spectral analysis of graph signals, which is based on the introduction of the so
called  Graph Fourier Transform (GFT).
Alternative definitions of GFT have been introduced
see, e.g., \cite{Shuman2013},\cite{Moura2014},\cite{Pesenson2008},\cite{Zhu2012},\cite{Singh2016}, each of them coming from different motivations, like building a basis with minimal variation, filtering signals defined over graphs, etc.
Two basic approaches have been suggested.
The first one is rooted on spectral graph theory and it uses the graph-Laplacian as the central
unit, see e.g. \cite{Shuman2013} and the references therein. This approach applies to undirected
graphs and the Fourier basis is constituted by the eigenvectors of the graph Laplacian, which represent the
basis that minimizes the $l_2$-norm graph total variation.
This approach is well motivated on undirected graphs where the minimization of the $\ell_2$-norm total variation is equivalent to minimizing the quadratic form built on the Laplacian matrix. Hence an orthonormal basis minimizing the $\ell_2$-norm total variation leads to the eigenvectors of the Laplacian matrix. However, these properties do not hold anymore in the directed graph case.
An alternative approach, valid for the more general and challenging case of directed graphs, was proposed in \cite{Moura2013},\cite{Moura2014}. That method builds on the Jordan decomposition of the adjacency matrix, and defines the associated generalized eigenvectors as the GFT basis. This second method is rooted on the association of the graph adjacency matrix with the signal shift operator, which is at the basis of all shift-invariant linear filtering methods for graph signals \cite{Puschel_1},\cite{Puschel_2}. This approach  paved the way to the algebraic signal processing framework. However, the GFT definition proposed in \cite{Moura2014} raises some  important issues requiring  further investigation. First, the basis vectors are linearly  independent, but in general they are not orthogonal, so that the resulting transform is not unitary and then it does not preserve scalar products. Second, the total variation
introduced in \cite{Moura2014},  does not respect some desirable properties, for example, it does not guarantee that a constant graph signal has zero total variation \cite{Mallat}, \cite{Lozes}. Finally, the numerical computation of the Jordan decomposition often incurs  into well-known numerical instabilities, even for moderate size matrices  \cite{Golub76}, although  alternative decomposition methods have been recently suggested to tackle these instability issues \cite{girault}.
In some applications, one of the major motivations for using the GFT is the analysis of graph signals that exhibit clustering properties, i.e. signals that are smooth within subsets of highly interconnected nodes (clusters), while they can vary arbitrarily across different clusters.  In such cases, the GFT of these signals is typically sparse and its sparsity carries relevant information on the data under analysis. These signals are said to be band-limited, in analogy with what happens to smooth time signals. Within the machine learning context, GSP can play a key role in unsupervised and semi-supervised learning, as suggested in \cite{GaddeAnisOrtega}, \cite{AnisGAO15}. In these applications, the input is a point cloud and the goal is to detect clusters, either without or with limited supervision. Graph-based methods tackle these problems by associating a graph to the point cloud, where the vertices are the points themselves, whereas edges between pairs of points are established if two points are sufficiently close. The goal of clustering/classification is to associate a different label to each cluster. If we look at these labels as a signal defined over the points (vertices), this signal is band-limited by construction \cite{GaddeAnisOrtega}, \cite{AnisGAO15}.

In this paper, we propose a novel alternative approach to build the GFT basis for the general case of directed graphs. Rather than starting from the decomposition of one of the graph matrix descriptors, either adjacency or Laplacian, we start identifying an objective function to be minimized and then we build an orthogonal matrix that minimizes that objective function. More specifically, we choose as objective function the graph cut size, as its minimization leads to identifying clusters. We consider the general case of directed graphs, which subsumes the undirected graphs as a particular case. The cut function is a set function and its minimization is NP-hard, however exploiting the sub-modularity property of the cut size, it has been shown that there exists a lossless convex relaxation of the cut size, named its Lov\'{a}sz extension \cite{Lovasz1983}, \cite{Bach2013}, whose minimization preserves the optimality of the solution of the original non-convex problem. Interestingly,
the Lov\'{a}sz extension of the cut size gives rise to an alternative definition of total variation of a graph signal that captures the edges' directivity.  Furthermore, in the case of undirected graphs, the Lov\'{a}sz extension reduces to the  $l_1$ norm total variation of a graph signal, which  represents the discrete counterpart of the total variation of continuous-time signals, which plays  a fundamental role in the continuous time Fourier Transform, see, e.g., \cite{Mallat},\cite{Zhu2012}.
We define the GFT basis as the set of orthonormal vectors that minimize the Lov\'{a}sz extension of the cut size. Unfortunately, even though the objective function is convex, the resulting problem is non-convex, because of the orthogonality constraint imposed on the basis vectors. Thus, to find a (possibly local) solution of the problem in an efficient manner, we exploit two recently developed methods that are specifically tailored to handle non-convex orthogonality constraints, namely, the splitting orthogonality constraints (SOC) method \cite{Lai2014}, and the proximal alternating minimized augmented Lagrangian (PAMAL) method \cite{Chen}. SOC method is quite simple to implement and, even if no convergence proof has been provided yet, extensive numerical results validate the effectiveness and robustness of such a strategy. Conversely, PAMAL algorithm, which hybridizes the augmented Lagrangian method and the proximal minimization scheme, is known to guarantee convergence. Furthermore, any limit point of each sequence generated by PAMAL method satisfies the Karush-Kuhn Tucker conditions of the original non-convex problem \cite{Chen}.
Finally, to prevent the resulting basis vectors to be excessively sparse vectors, we consider the minimization of a continuous relaxation of  the  balanced cut size. To solve the corresponding non-convex fractional problem, we adopt an efficient and convergent algorithm based on the explicit-implicit gradient method \cite{Bresson2012}.

The paper is organized as follows. Sec. \ref{section:Min-cut size}  introduces the graph signal variations as the continuous
Lov\'{a}sz extension of the min-cut size. In Sec. \ref{section:Fourier_basis}, we define the GFT as the set of optimal orthonormal vectors minimizing the graph signal variation, and in Sec. \ref{section:optm_methods} we illustrate the optimization methods used for solving the resulting non-convex problem. Therefore, in  Sec. \ref{section:balanced} we conceive the GFT as the solution of a balanced min cut problem, while Sec. \ref{section:numerical_results} illustrates some numerical examples validating the effectiveness of the proposed approaches.
Finally, Sec. \ref{section:conclusions} draws some conclusions.
\vspace{-0.1cm}
\section{Min-cut size and its Lov\'{a}sz extension} \vspace{-0.1cm}
\label{section:Min-cut size}
In this section, we recall the definitions of cut size and Lov\'{a}sz extension, as they will form the basic tools
for our definition of GFT. We consider a graph $\mathcal{G}=\{\mathcal{V},\mathcal{E}\}$ consisting of  a set of $N$ vertices (or nodes) $\mathcal{V}=\{1,\ldots, N\}$ along with a set of edges $\mathcal{E}=\{a_{ij}\}_{i,j \in \mathcal{V}}$, such that $a_{ij}>0$ if there is a direct link from node $j$ to node $i$, or $a_{ij}=0$ otherwise. We denote with $|\mathcal{V}|$
the cardinality of $\mathcal{V}$, i.e. the number of elements of $\mathcal{V}$.
A signal $\bs$ on a graph $\mathcal{G}$ is defined as a mapping from the vertex set
to a real vector of size $N=|\mathcal{V}|$, i.e. $\bs: \mathcal{V}\rightarrow \mathbb{R}$.
 Let $\mA$ denote the $N\times N$ adjacency matrix with entries given by the edge weights $a_{ij}$ for $i,j=1,\ldots,N$.
The  graph Laplacian is defined as $\mL:= \mD-\mA$ where the in-degree matrix $\mD$ is a diagonal matrix
whose $i$th diagonal entry is $d_i= \sum_j a_{ij}$.

One of the basic operations over graphs is clustering, i.e. the partition of the graph onto disjoint subgraphs, such that the vertices within each subgraph (cluster) are highly interconnected, whereas there are only a few links between different clusters. Finding a good partition can be formulated as the minimization of the cut size \cite{Newman}, whose definition is reported here below. Let us consider a subset of vertices $\mathcal{S}\subset \mathcal{V}$, and its complement set in $\mathcal{V}$ denoted by $\bar{\mathcal{S}}$. The edge boundary of $\mathcal{S}$ is defined as the set of edges with one end in $\mathcal{S}$ and the other end in $\bar{\mathcal{S}}$. The cut size between $\mathcal{S}$ and  $\bar{\mathcal{S}}$ is defined as the sum of the weights over the boundary \cite{Newman}, i.e.
\beq
\label{cut_size}
\text{cut}(\mathcal{S},\bar{\mathcal{S}}):=\sum_{i \in \mathcal{S}, j \in \bar{\mathcal{S}}} a_{ji}.
\eeq
Finding the partition that minimizes the cut size in (\ref{cut_size}) is an NP-hard problem. To overcome this difficulty, we exploit the sub-modularity property of the cut size \cite{Bach2013}, which ensures that its Lov\'{a}sz extension is a convex function  \cite{Bach2013}. We briefly recall some of the main definitions and properties here below. Given the set $\mathcal{V}$ and its power set $2^{\mathcal{V}}$, i.e. the set of all its subsets, let us consider a real-valued set function $F: 2^{\mathcal{V}}\rightarrow \mathbb{R}$.  The cut size in (\ref{cut_size}) is an example of set function, with $F(\mathcal{S}):=\text{cut}(\mathcal{S},\bar{\mathcal{S}})$.
Every element of the power set $2^{\mathcal{V}}$ may be associated to a vertex of the hyper-cube $\{0,1\}^N$. Namely, a set $\mathcal{S} \subseteq {\mathcal{V}}$ can be uniquely identified to the indicator vector $\mathbf{1}_{\mathcal{S}}$, i.e. the vector which
is $1$ at entry $j$, if $j\in \mathcal{S}$, and $0$ otherwise. Then, a set-function $F$
can be defined on the vertices of the hyper-cube $\{0,1\}^N$. The Lov\'{a}sz extension of a graph function $F$ \cite{Lovasz1983}, \cite{Bach2013}, allows the extension of a set-function defined on the vertices of the hyper-cube $\{0,1\}^N$, to the full hypercube $[0,1]^N$ and hence to the entire space $\mathbb{R}^N$. We recall its definition hereafter.
\begin{definition}
Let $F: 2^{\mathcal{V}} \rightarrow \mathbb{R}$ be a set function with $F(\emptyset)=0$.
Let $\bx \in \mathbb{R}^{N}$ be ordered w.l.o.g. in increasing order such that $x_1\leq x_2 \leq \ldots \leq x_N$. Define
$C_0 \triangleq \mathcal{V}$  and $C_i \triangleq \{j \in \mathcal{V} : x_j>x_i\}$ for $i>0$.
Then, the Lov\'{a}sz extension $f: \mathbb{R}^{N} \rightarrow \mathbb{R}$
of $F$, evaluated at $\bx$, is given by:
\beq \label{Lov_def}
\begin{split}
f(\bx)\,=&\,\ds \sum_{i=1}^{N} x_i(F(C_{i-1})-F(C_{i}))\\
\,=& \,\ds \sum_{i=1}^{N-1} F(C_{i})(x_{i+1}-x_i) +x_1  F(\mathcal{V}).
\end{split}
\eeq
\end{definition}
Note that $f(\bx)$ is piecewise affine  w.r.t. $\bx$, and $F(\mathcal{S})=f(\mathbf{1}_{\mathcal{S}})$ for all $\mathcal{S}\subseteq \mathcal{V}$. An interesting class of set functions is given by the submodular set functions, whose definition follows next.

\begin{definition}
A set function  $F : 2^{\mathcal{V}}\rightarrow \mathbb{R}$  is  submodular if and only if, $\forall \mathcal{A},\mathcal{B}\subseteq \mathcal{V}$, it satisfies the following inequality:
\beq \nonumber
F(\mathcal{A})+F(\mathcal{B})\geq F(\mathcal{A} \cup \mathcal{B})+F(\mathcal{A} \cap \mathcal{B}).
\eeq
\end{definition}
A fundamental property of a submodular set function is that its Lov\'{a}sz extension is a convex function. This is formally stated in the following proposition \cite[p.23]{Bach2013}.

\begin{proposition} \label{min_f}
Let  $F : 2^{\mathcal{V}}\rightarrow \mathcal{R}$  be a {\it submodular} function and $f$ be its
Lov\'{a}sz extension. Then, it holds $$\min_{\mathcal{S}\subseteq \mathcal{V}} \; F(\mathcal{S})=\min_{\mx \in \{0,1\}^N} \; f(\mx)=\min_{\mx \in [0,1]^N} \; f(\mx).$$
Moreover, the set of minimizers of $f(\mx)$ on $[0,1]^N$ is the convex hull of the minimizers of $f(\mx)$ on $\{0,1\}^N$.
\end{proposition}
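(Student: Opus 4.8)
The plan is to route the entire argument through the \emph{level-set (threshold) representation} of the Lov\'{a}sz extension. First I would observe that, for $\bx \in [0,1]^N$ ordered as $0\le x_1\le\cdots\le x_N\le 1$, the second form of the definition (\ref{Lov_def}) is nothing but a ``layer-cake'' rewriting of the integral
\beq \nonumber
f(\bx)=\int_0^1 F(\{j\in\mathcal{V}: x_j\ge t\})\,dt,
\eeq
obtained by noting that $\{j:x_j\ge t\}=\mathcal{V}$ for $t\in(0,x_1]$, that it equals $C_i=\{j:x_j>x_i\}$ for $t\in(x_i,x_{i+1}]$, and that $F(\emptyset)=0$ discards the range $t>x_N$. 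This identity exhibits $f(\bx)$ as an average of the values $F(\mathcal{S})$ over the level sets $\mathcal{S}$ of $\bx$, and it is the single tool driving everything below.

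For the chain of equalities, the first one $\min_{\mathcal{S}}F(\mathcal{S})=\min_{\mx\in\{0,1\}^N}f(\mx)$ is immediate from the bijection $\mathcal{S}\leftrightarrow\mathbf{1}_{\mathcal{S}}$ together with $F(\mathcal{S})=f(\mathbf{1}_{\mathcal{S}})$, and $\min_{[0,1]^N}f\le\min_{\{0,1\}^N}f$ holds trivially since $\{0,1\}^N\subset[0,1]^N$. For the reverse inequality I would invoke the integral representation: for every $\bx\in[0,1]^N$ each integrand satisfies $F(\{j:x_j\ge t\})\ge\min_{\mathcal{S}}F(\mathcal{S})$, whence $f(\bx)\ge\min_{\mathcal{S}}F(\mathcal{S})$, closing the chain. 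Notably, this part uses only $F(\emptyset)=0$ and not yet submodularity.

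The convex-hull claim splits into two inclusions. For ``$\supseteq$'' I would use convexity of $f$, the classical consequence of submodularity (equivalently, $f(\bx)=\max_{\bs\in B(F)}\bs^{\top}\bx$ is the support function of the base polytope $B(F)$, hence convex). Writing $m$ for the common minimum value, any convex combination $\bx=\sum_l\lambda_l\mathbf{1}_{\mathcal{S}_l}$ of $\{0,1\}$-minimizers obeys $f(\bx)\le\sum_l\lambda_l f(\mathbf{1}_{\mathcal{S}_l})=m$, and since $m$ is the global minimum on $[0,1]^N$ we get $f(\bx)=m$, so $\bx$ is a minimizer. For ``$\subseteq$'' I would take a minimizer $\bx^\star\in[0,1]^N$ with $f(\bx^\star)=m$; the integral representation gives $\int_0^1[F(\{j:x^\star_j\ge t\})-m]\,dt=0$ with nonnegative integrand, forcing $F(\{j:x^\star_j\ge t\})=m$ for almost every $t\in(0,1)$. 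Thus every level set of $\bx^\star$ is an $F$-minimizer, and the layer-cake identity $\bx^\star=\int_0^1\mathbf{1}_{\{j:x^\star_j\ge t\}}\,dt$ realizes $\bx^\star$ as a finite convex combination of the (finitely many, distinct) level-set indicators, each a $\{0,1\}$-minimizer; hence $\bx^\star$ lies in their convex hull.

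The hard part will be the convexity of $f$ invoked in ``$\supseteq$'': this is the only step that genuinely requires submodularity, and it is precisely the classical Lov\'{a}sz result, which one either cites from \cite{Bach2013} or proves through the greedy/base-polytope support-function characterization. A secondary subtlety is the boundary bookkeeping in the layer-cake decomposition: when $\max_j x^\star_j<1$ the interval weights sum to less than one and the deficit must be carried by $\mathbf{1}_{\emptyset}=\mathbf{0}$; this is legitimate because a positive-measure set of thresholds then produces the empty level set, forcing $m=F(\emptyset)=0$, so that $\emptyset$ is itself a minimizer and the combination remains convex over $\{0,1\}$-minimizers.
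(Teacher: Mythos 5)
Your proposal is correct, but note that the paper does not actually prove this proposition: it is stated as a known result and dispatched with a citation to Bach's monograph, so there is no in-paper argument to compare against. Your layer-cake route is the standard one and it is sound. The identity $f(\bx)=\int_0^1 F(\{j:x_j\ge t\})\,dt$ for $\bx\in[0,1]^N$ does match the second line of the paper's definition (the residual interval $(x_N,1]$ contributes nothing because $F(\emptyset)=0$), and it cleanly yields both the equality of the three minima and the ``$\subseteq$'' half of the convex-hull claim; your observation that this direction needs only $F(\emptyset)=0$ and not submodularity is accurate and worth keeping. The only external ingredient is the convexity of $f$ for submodular $F$, used in ``$\supseteq$''; you correctly flag it as the one genuinely submodularity-dependent step and defer it to the greedy/base-polytope characterization, which is exactly the content of the cited reference, so the citation is legitimate rather than circular. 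One small point of hygiene: from $\int_0^1[F(\{j:x^\star_j\ge t\})-m]\,dt=0$ you may only conclude that the level sets occurring on a \emph{positive-measure} set of thresholds are $F$-minimizers (the map $t\mapsto\{j:x^\star_j\ge t\}$ is piecewise constant, so this means the level sets receiving positive weight in the decomposition $\bx^\star=x_1\mathbf{1}_{\mathcal{V}}+\sum_i(x_{i+1}-x_i)\mathbf{1}_{C_i}+(1-x_N)\mathbf{1}_{\emptyset}$); that is all you use, so the argument stands, but the phrase ``every level set of $\bx^\star$ is an $F$-minimizer'' is slightly stronger than what is established. Your handling of the $\mathbf{1}_{\emptyset}$ boundary term is correct.
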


The cut size function  in (\ref{cut_size}) is known for being submodular, see, e.g.,  \cite{Bach2013}, \cite{Hein2013}.
More specifically, as shown  in \cite[p.54]{Bach2013}, the cut function is equal to the positive linear combination of the function $G_{ij} \, : \, \mathcal{S} \mapsto
(\mathbf{1}_{\mathcal{S}})_i[1-(\mathbf{1}_{\mathcal{S}})_j]$, i.e. $$\text{cut}(\mathcal{S})=\ds \sum_{i,j \in \mathcal{V}} a_{ji} G_{ij}.$$
The function $G_{ij}$ is the extension to $\mathcal{V}$ of a function $\widetilde{G}_{ij}$  defined only on the power set of $\{i,j\}$, where $\widetilde{G}_{ij}(\{i\})=1$ and all other values are zero, so that, from (\ref{Lov_def}), its Lov\'{a}sz extension is $\widetilde{G}_{ij}(x_i,x_j)= [x_i-x_j]_{+}$ with $[y]_+:=\text{max}\{y,0\}$. Therefore the Lov\'{a}sz extension of the cut size function, in the general case of directed graphs, is given by:
\beq
\label{TV-directed}
f(\mx)=\ds \sum_{i,j=1}^{N} a_{ji}[x_i-x_j]_{+}:= \text{GDV}(\mx). \eeq
 We term this function the Graph Directed Variation (GDV), as it captures the edges' directivity.
For undirected graphs, imposing $a_{ij}=a_{ji}$, the Lov\'{a}sz extension of the cut size  boils down to
\beq
\label{TV-undirected}
f(\mx)=\ds \sum_{i,j=1, i>j}^{N} a_{ji}|x_i-x_j|:= \text{GAV}(\mx).
\eeq
Interestingly, this function, which we call Graph Absolute Variation (GAV),  represents the discrete counterpart of the $l_1$ norm total variation, which plays a key role in the classical Fourier Transform of continuous time signals \cite{Mallat}, \cite{Zhu2012}.\\
It is easy to show that the directed variation  $\text{GDV}$  satisfies the following properties:
\begin{enumerate}
  \item [i)] $\text{GDV}(\mx)\geq 0$, $\forall \, \mx \in \mathbb{R}^N$;
  \item [ii)] $\text{GDV}(\mx)=0$, $\forall \, \mx=c \mathbf{1}$ with $c\geq 0$;
  \item [iii)] $\text{GDV}(\alpha \,\mx)=\alpha \, \text{GDV}(\mx)$, $\forall \, \alpha\geq 0$, i.e. it is  positively homogeneous;
 \item [iv)] $\text{GDV}(\mx +\my) \leq \text{GDV}(\mx)+\text{GDV}(\my)$, $\forall \, \mx,\my \in \mathbb{R}^N$.
\end{enumerate}
$\text{GDV}$ is neither a proper norm nor a semi-norm, since, in this latter case, it should be absolutely homogeneous. However, it meets the desired property ii) ensuring  that a constant graph signal has zero total variation.

\section{Graph Fourier Basis and Directed\\ Total Variation}\vspace{-0.1cm}
\label{section:Fourier_basis}
Alternative definitions of GFT have been proposed in the literature, depending on the different perspectives used to emphasize specific signal features. In case of  undirected graphs, the GFT of a vector $\bs$ was defined as \cite{Shuman2013}
\begin{equation}
\label{Ux}
\hat{\bs}=\mU^T \bs,
\end{equation}
where the columns of matrix $\mU$ are the eigenvectors of the Laplacian matrix $\mL$, i.e. $\mL=\mU \mLambda \mU^T$. This definition is basically rooted on the clustering properties of these eigenvectors, see, e.g., \cite{Chung1997}. In fact, by definition of eigenvector, the Fourier basis used in (\ref{Ux}) can be thought as the solution of the following sequence of optimization problems:
\begin{equation}
\label{u_GFT_undir}
\begin{split}
{\bu}_k = \,&\, \underset{\bu_k \in \mathbb{R}^{N}}{{\rm arg}\min} \; \ds \bu_k^T \mL \bu_k \,:=\,
\underset{\bu_k \in \mathbb{R}^{N}}{{\rm arg}\min} \,\, \text{GQV}(\bu_k)\\
& \; \mbox{s.t.} \quad \quad \bu_k^T {\bu}_{\ell} = \delta_{k\ell}, \; \;\; \ell=1,\ldots, k,
\end{split}
\end{equation}
for $k=2,\ldots,N$, where $\delta_{k\ell}$ is the Kronecker delta, and we used the property that the quadratic form built on the Laplacian
is the $\ell_2$-norm, or graph quadratic variation (GQV), i.e. $$\text{GQV}(\mx):=\sum_{i,j=1, j>i}^{N} a_{ji}(x_i-x_j)^2.$$
Thus, the Fourier basis obtained from  (\ref{u_GFT_undir}) coincides with the set of orthonormal vectors that minimize the $\ell_2$-norm total variation. In all applications where the graph signals exhibit a  cluster behavior, meaning that the signal is relatively smooth within each cluster, whereas it can vary arbitrarily from cluster to cluster, the GFT defined as in (\ref{Ux}) helps emphasizing the presence of clusters \cite{Chung1997}.  However, the identification of the Laplacian eigenvectors as the orthonormal vectors that minimize the GQV is only valid for undirected graphs, for which  the quadratic form built on the Laplacian reduces to the GQV. For directed graphs, the quadratic form in (\ref{u_GFT_undir}) captures only properties associated to the symmetrized Laplacian (i.e., $\bL_s=(\bL+\bL^T)/2$), and hence it cannot capture the edges' directivity. The generalization to  {\it directed} graphs,  was proposed in \cite{Moura2014} as
\begin{equation}
\label{V-1x}
\hat{\bs}=\mV^{-1} \bs,
\end{equation}
where $\mV$ comes from the Jordan decomposition of the  nonsymmetric adjacency matrix $\mA$, i.e. $\mA=\mV \mJ \mV^{-1}$.
To estimate variations of the graph Fourier basis and to identify an order among frequencies, the total variation of a vector was defined in \cite{Moura2014} as
\beq
\label{TV_dir}
\text{TV}_A(\bs)=\ds \| \bs - \mA_{\text{norm}}\,\bs \|_1,
\eeq
where $\mA_{\text{norm}}\!:=\!\mA/|\lambda_{\text{max}}(\mA)|$.
The previous definition leads to the elegant theory of algebraic signal processing over graphs \cite{Puschel_1, Puschel_2, Moura2013, Moura2014}. However, there are some critical issues associated to that definition that need to be further explored. First, the definition of total variation as given in (\ref{TV_dir}) does not ensure that a constant graph signal has zero total variation, and this collides with the common meaning of total variation \cite{Mallat}, \cite{Zhu2012}, \cite{Lozes}.
Second, the columns of  $\mV$ are linearly independent complex generalized eigenvectors, but in general they are not orthogonal. This gives rise to a GFT that does not preserve inner products when passing from the observation to the transformed domain. Furthermore,  the computation of the Jordan decomposition incurs into serious and intractable numerical instabilities when the  graph size exceeds even moderate values \cite{Golub76} and  more stable matrix decomposition methods have to be adopted to tackle its instability issues \cite{girault}. To overcome some of these criticalities, very recently the authors of  \cite{Singh2016} proposed a shift operator based on the directed Laplacian of a graph. Using the Jordan decomposition, the graph Laplacian is decomposed as
\beq \label{GFTSingh}
\mL=\mV_L \mJ_L \mV_L^{-1}
\eeq
and the GFT is defined in \cite{Singh2016} as
\begin{equation}
\label{V-L}
\hat{\bs}=\mV^{-1}_{L} \bs.
\end{equation}
To quantify oscillations in the graph harmonics and to order the frequencies, the total variation was defined in  \cite{Singh2016} as
\beq
\label{GTV_dirL}
\text{TV}_L(\bs)=\ds \| \mL \,\bs \|_1.
\eeq
This definition of total variation ensures a zero value for constant graph signals. Furthermore,  the eigenvalues with small absolute value correspond to low frequencies. Nevertheless, the GFT given by $\mF=\mV^{-1}_{L}$ is still a non-unitary transform and its computation is affected by the numerical instabilities associated to the Jordan decomposition.

In this paper, we propose a novel method to build the graph Fourier basis as the set of $N$ orthonormal vectors $\mx_i, i=1, \ldots, N$, that minimizes the total variation defined in (\ref{TV-directed}), which represents the continuous convex Lov\'{a}sz extension of the graph cut size in (\ref{cut_size}). The first vector is certainly the constant vector, i.e. $\mx_1=b \, \mathbf{1}$, with $b=1/\sqrt{N}$, as this (unit-norm) vector yields a total variation equal to zero. Let us introduce the matrix $\mX:=(\mx_1, \ldots, \mx_N)\in \mathbb{R}^{N \times N}$  containing all the basis vectors.
Thus, the search for the GFT basis  can be formally stated as the search for the orthonormal vectors that minimize the directed total variation in (\ref{TV-directed}), i.e.
\beq \nonumber
\begin{split} \label{prob_main}
 & \underset{\mathbf{X}\in \mathbb{R}^{N\times N}}{\min}  \quad \;  \text{GDV}(\mX)\,:=\,\sum_{k=1}^{N} \text{GDV}(\mx_k) \hspace{1.5cm} (\mathcal{P})\medskip\\
 & \hspace{0.4cm}  \begin{array}{cll}   \hspace{-0.2cm} \mbox{s.t.}   &  \quad  \;\; \mX^T \mX=\mI,\quad\mx_1=b\mathbf{1}. \end{array}\end{split}
\eeq
The constraints are used to find an orthonormal basis and to prevent the trivial null solution. Although the objective function is convex, problem $\mathcal{P}$ is non-convex due to the orthogonality constraint. In the next section, we present two alternative optimization strategies aimed at solving the non-convex, non-differentiable problem $\mathcal{P}$ in an efficient manner.

\section{Optimization Algorithms}
\label{section:optm_methods}
To avoid handling the non-convex orthogonality constraints  directly, several methods have been proposed in the literature based on the solution of a sequence of unconstrained problems approaching the feasibility condition, such as the penalty methods \cite{Nocedal}, \cite{Bethuel} and the augmented Lagrangian based methods \cite{Bertsekas}, \cite{Fortin}. The penalty method is generally simple, but it suffers from slow-convergence and ill-conditioning. On the other hand, the  standard augmented Lagrangian method  solves a sequence of  sub-problems that usually have no analytical solutions and the choice of the initial points, ensuring  a fast convergence rate,  is usually nontrivial. To cope with these issues, in this section   we present two alternative iterative algorithms to solve the non-convex, non-smooth problem $\mathcal{P}$,   hinging on some recently developed methods for solving non-differentiable problems with  non-convex constraints \cite{Lai2014},\cite{Chen}.
The first method, introduced in \cite{Lai2014}, called splitting orthogonality constraints (SOC) method,
is based on the alternating method of multipliers (ADMM) \cite{Boyd_ADMM},\cite{Glowinski} and the split Bregman method \cite{Yin},\cite{Osher}. The SOC method leads to some important benefits, as it is simple to implement and the resulting non-convex sub-problem with orthonormal constraint admits a closed form solution. Although no convergence proof of SOC method has been provided yet,  numerical results validate its value and robustness.\\
An alternative optimization method that tackles the non-convex minimization problem $\mathcal{P}$ and guarantees convergence is the
PAMAL algorithm recently developed in \cite{Chen}. The algorithm combines the augmented Lagrangian method with proximal alternating minimization. A convergence proof was provided in \cite{Chen}.  More specifically, this method has the so-called sub-sequence convergence property, i.e. there exists at least one convergent sub-sequence, and any limit point satisfies the
Karush-Kuhn Tucker (KKT) conditions of the original nonconvex problem. Building on these algorithms, in the sequel we introduce two efficient optimization strategies that build the basis for the Graph Fourier Transform, as the solution of problem $\mathcal{P}$.
\vspace{-0.3cm}
\subsection{SOC method}\vspace{-0.1cm}
The SOC algorithm was developed in \cite{Lai2014} and tackles orthogonality constrained problems by iteratively solving a convex problem and a quadratic problem that admits a closed-form solution. More specifically, introducing an auxiliary variable $\mP=\mX$
to split the orthogonality constraint, problem $\mathcal{P}$ is equivalent to \vspace{-0.1cm}
\beq  \label{split}
\begin{split}
 & \underset{\mathbf{X}, \mathbf{P} \in \mathbb{R}^{N \times N}}{\min}  \quad \text{GDV}(\mX) \medskip\\
 & \hspace{0.4cm}  \begin{array}{cll}    \mbox{s.t.}   &  \quad \;\; \mX=\mP,\quad \mx_1=b\mathbf{1}, \quad \mP^T \mP=\mI. \\
 \end{array}\end{split}\vspace{-0.1cm}
\eeq
The first constraint is linear and, as discussed in
\cite{Lai2014}, it can be solved using Bregman iteration. Therefore, by adding the Bregman penalty function \cite{Yin},  problem (\ref{split})
 is equivalent to the following simple two-step procedure:\vspace{-0.1cm}
\beq \nonumber
\begin{split}
 &(\mX^k,\mP^k) \triangleq  \underset{\mathbf{X}, \mathbf{P} \in \mathbb{R}^{N \times N}}{\arg \min}  \quad \text{GDV}(\mX)+\ds\frac{\beta}{2}\| \mX-\mP+\mB^{k-1}\|^{2}_{F} \medskip\\
 &\hspace{2.2cm}  \begin{array}{cll}    \mbox{s.t.}   & \quad \;\;\; \mx_1=b\mathbf{1}, \;\; \mP^T \mP=\mI;\end{array}  \\
 & \mB^k=\mB^{k-1}+\mX^k-\mP^k, \end{split}
\eeq
where $\beta$ is a strictly positive constant. Similarly to ADMM and split Bregman iteration \cite{Goldstein}, the above problem can be solved by iteratively minimizing with respect to $\mX$ and $\mP$:
\beq
\begin{array}{lll}
    \begin{array}{ll}  \text{1.} \quad \mX^k \triangleq & \underset{\mathbf{X} \in \mathbb{R}^{N \times N}}{\arg \min}  \quad \text{GDV}(\mX)+\ds\frac{\beta}{2}\| \mX-\mP^{k-1}+\mB^{k-1}\|^{2}_{F}  \\
     & \hspace{0.4cm} \mbox{s.t.}  \hspace{0.8cm} \mx_1=b\mathbf{1}  \hspace{3cm} (\mathcal{P}_k)\end{array} \medskip\\
    \begin{array}{ll}  \text{2.} \quad  \mP^k \triangleq &\underset{\mathbf{P} \in \mathbb{R}^{N \times N}}{\arg \min}  \quad \| \mP-(\mX^{k}+\mB^{k-1})\|^{2}_{F}\medskip\\
    & \hspace{0.4cm} \mbox{s.t.} \hspace{0.8cm} \mP^T\mP=\mI  \hspace{2.8cm} (\mathcal{Q}_k)\end{array}\medskip\\
    \begin{array}{ll} \text{3.}  \quad  \mB^k=\mB^{k-1}+\mX^k-\mP^k. \end{array}
\end{array} \label{iter_alg}
\eeq
The interesting aspect of this formulation is that subproblem $\mathcal{P}_k$ is convex and the second constrained quadratic problem $\mathcal{Q}_k$ has a closed-form solution, as illustrated in the following proposition.
\begin{proposition}
 Define $\mY^k=\mX^k+\mB^{k-1}$ and let $$\mY^k=\bar{\mQ}\mS \bar{\mR}^T$$ be its SVD decomposition, where $\bar{\mQ}, \bar{\mR} \in  \mathbb{R}^{N\times N}$ are unitary matrices, and $\mS\in \mathbb{R}^{N\times N}$ is the diagonal matrix  with  entries the singular values of $\mY^k$. Then, the optimal solution of the quadratic non-convex problem $\mathcal{Q}_k$ in (\ref{iter_alg}) is  $\mP^{k}=\bar{\mQ} \bar{\mR}^T$.
\end{proposition}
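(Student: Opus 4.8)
The plan is to recognize $\mathcal{Q}_k$ as the classical orthogonal Procrustes problem and to reduce the constrained Frobenius-norm minimization to a trace maximization that the SVD of $\mY^k$ resolves in closed form. First I would expand the objective as
$$\| \mP - \mY^k \|_F^2 = \tr(\mP^T\mP) - 2\,\tr(\mP^T\mY^k) + \tr((\mY^k)^T\mY^k).$$
Under the feasibility constraint $\mP^T\mP = \mI$ the first term equals the constant $N$ and the last term does not depend on $\mP$, so minimizing the objective is equivalent to \emph{maximizing} $\tr(\mP^T\mY^k)$ over all $\mP$ with $\mP^T\mP = \mI$.

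Next I would substitute the SVD $\mY^k = \bar{\mQ}\mS\bar{\mR}^T$ and invoke the cyclic invariance of the trace to obtain
$$\tr(\mP^T\mY^k) = \tr\!\big(\bar{\mR}^T\mP^T\bar{\mQ}\,\mS\big) = \tr(\mZ^T\mS),$$
where $\mZ := \bar{\mQ}^T\mP\bar{\mR}$. Since $\bar{\mQ}$, $\bar{\mR}$ and $\mP$ are all orthogonal, $\mZ$ is orthogonal as well. Because $\mS$ is diagonal with nonnegative singular values $s_i$, the objective collapses to $\sum_i s_i z_{ii}$, a weighted sum of the diagonal entries $z_{ii}$ of $\mZ$.

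The crux of the argument, and the one step I would treat with care, is bounding $\sum_i s_i z_{ii}$. Each column of an orthogonal matrix is unit-norm, which forces $|z_{ii}| \le 1$ for every $i$; hence $\sum_i s_i z_{ii} \le \sum_i s_i$, with the bound attained when $z_{ii} = 1$ for all $i$. A unit diagonal entry of an orthogonal matrix forces the whole corresponding row and column to vanish off the diagonal, so $z_{ii} = 1$ for all $i$ yields $\mZ = \mI$, i.e. $\bar{\mQ}^T\mP\bar{\mR} = \mI$, equivalently $\mP = \bar{\mQ}\bar{\mR}^T$. Substituting back confirms that $\mP^{k} = \bar{\mQ}\bar{\mR}^T$ is feasible (a product of orthogonal matrices) and attains the maximum, hence solves $\mathcal{Q}_k$. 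The only subtlety worth flagging is that, when some $s_i=0$, the maximizer need not be unique; nevertheless $\bar{\mQ}\bar{\mR}^T$ always realizes the optimum, so the stated solution remains valid in every case.
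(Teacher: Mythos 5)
Your proof is correct and complete. The paper itself does not spell out an argument for this proposition: it simply defers to Theorem 2.1 of the SOC paper by Lai and Osher. It does, however, give a full proof of the analogous closed form for the PAMAL subproblem $\tilde{\mathcal{Q}}_{k,n}$ in Appendix A, and comparing your route to that one is instructive. There, the authors first derive the candidate $\mP=\mQ\mT^T$ by writing the Lagrangian of the orthogonality-constrained quadratic, solving the stationarity equation $\mP\mB=\mF$ together with $\mB=\mT\mSigma\mT^T$, and only afterwards certify global optimality by reducing to $\tr(\mSigma)\geq\tr(\mZ^T\mSigma)$ for orthogonal $\mZ$ --- which is exactly the inequality you use. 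You bypass the KKT machinery entirely: expanding $\|\mP-\mY^k\|_F^2$ under $\mP^T\mP=\mI$ turns the problem directly into the Procrustes trace maximization $\max\tr(\mP^T\mY^k)$, and the SVD substitution plus the bound $|z_{ii}|\leq 1$ finishes it. Your approach is more elementary and self-contained (no need to posit a symmetric multiplier matrix or to justify first-order conditions on a non-convex feasible set), at the cost of being specific to this particular objective; the KKT route in the appendix generalizes more mechanically to the PAMAL variant where the objective carries the extra linear term $\langle\mLambda^k,\mP\rangle$ and the proximal term. Your remark about non-uniqueness when some singular value vanishes is a point the paper's appendix glosses over (it asserts $\Sigma_{ii}>0$), and handling it the way you do is the more careful statement.
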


\begin{proof}
See the  proof of  Theorem $2.1$ in \cite{Lai2014}.
\end{proof}

Combining (\ref{iter_alg}) and Proposition 2, the main steps of the SOC method are summarized in Algorithm \ref{algorithm:Alg_SOC}. It is important to remark that the choice of the coefficient $\beta$ strongly affects the convergence behavior of the algorithm: a large
value of $\beta$ will force a stronger equality constraint, while a too small $\beta$ might not be able to guarantee the solution to satisfy the
orthogonality constraint. Then, a proper tuning of the coefficient $\beta$ is important to ensure a fast convergence of the algorithm.
Although, as remarked in \cite{Lai2014}, the convergence analysis of SOC algorithm is still an open problem, we will show next that
the numerical results testify the validity and robustness of this method when applied to our case.

\noindent
 \begin{algorithm}[t]
\small

    \quad  {Set}  $\beta>0$, $\mX^0 \in \mathbb{R}^{N \times N}$, $\mX^{0 \, T} \mX^0=\mI$, $\mx_1=b \mathbf{1}$, $\mP^0=\mX^0$,

     \quad $\mB^0=\mathbf{0}$, $k=1$.

    \quad  {\textbf{Repeat}}

 \quad \quad           {Find} $\mX^k$ {as} {solution} {of} $\mathcal{P}_k$ in (\ref{iter_alg}),

 \quad \quad           $\mY^k=\mX^k+\mB^{k-1}$,

   \quad \quad          {Compute} {SVD} {decomposition} $\mY^k=\bar{\mQ} \mS \bar{\mR}^T$,

\quad \quad           $\mP^k=\bar{\mQ} \bar{\mR}^T$,

\quad \quad           $\mB^k= \mB^{k-1}+\mX^k-\mP^k$,

  \quad \quad         $k=k+1$,

  \quad {\textbf{until}}   {\textbf{convergence}.}

    \caption{\!\!: SOC method}
 \label{algorithm:Alg_SOC}
\end{algorithm}
\vspace{-0.1cm}
\subsection{PAMAL method}
As an alternative efficient method to tackle the non-convexity of problem $\mathcal{P}$, we propose here an approach based on PAMAL algorithm \cite{Chen}. The method solves the orthogonality constrained problem by iteratively  updating the primal variables and the multipliers estimates. To this end, let us reformulate the problem as follows. Let us introduce the sets $\mathcal{S}_{\mathbf{1}}$, defined as
$\mathcal{S}_{\mathbf{1}}\triangleq \{\mx=\pm b \mathbf{1}\}$, and
 $\mathcal{S}_t\triangleq \{\mP \in \mathbb{R}^{N \times N} \; : \;\mP^T \mP=\mI\}$, which represents  the  Stiefel manifold \cite{Manton}.
For any set $\mathcal{S}$, its indicator function is defined as
\beq
\delta_{\mathcal{S}}(\mX)=\left\{ \begin{array}{lll} 0, & \text{if} \; \mX\in \mathcal{S}\\
+\infty, & \text{otherwise}. \end{array} \right.
\eeq
Given these symbols, problem (\ref{split}) is equivalent to the following one:\vspace{-0.02cm}
\beq \nonumber
\begin{split}
 & \underset{\mathbf{X}, \mathbf{P} \in \mathbb{R}^{N \times N}}{\min}  \quad f(\mX,\mP)\triangleq\text{GDV}(\mX)+\delta_{\mathcal{S}_{\mathbf{1}}}(\mx_1)+\delta_{\mathcal{S}_t}(\mP) \hspace{0.5cm} (\mathcal{P}_e)\\
 & \hspace{0.4cm}  \begin{array}{cll}    \mbox{s.t.}   &  \quad \; \; \mathbf{H}(\mX,\mP)\triangleq \mP-\mX=\mathbf{0}. \\
 \end{array}\end{split}
\eeq
The basic idea to solve a problem in the form of $\mathcal{P}_e$ was proposed in \cite{Chen}, and combines the augmented Lagrangian method \cite{Andreani},\cite{Bertsekas} with the alternating proximal minimization algorithm. The result is known as the PAM method \cite{Bolte}, which deals with non-smooth, non-convex optimization. According to the augmented Lagrangian method, we add a penalty term to the objective function in order to associate a high cost to unfeasible points. In particular, the augmented Lagrangian function
associated to the non-smooth problem $\mathcal{P}_e$, is
 \beq \label{Lagrange} \nonumber
 \mathcal{L}(\mX,\mP,\mLambda)= f(\mX,\mP)+\langle \mLambda, \mathbf{H}(\mX,\mP)\rangle+\frac{\rho}{2}\|\mathbf{H}(\mX,\mP) \|^{2}_{F},
 \eeq
where $\rho$ is  a positive penalty coefficient, $\mLambda \in \mathbb{R}^{N \times N}$ represents the multipliers matrix, while the matrix inner product is defined as $\langle \mA, \mB\rangle \triangleq \text{tr}(\mA^T  \mB)$. The proposed augmented Lagrangian method reduces  problem $\mathcal{P}_e$ to a sequence of problems that alternately update, at each iteration $k$,  the following three steps:%\vspace{-0.1cm}
 \begin{enumerate}
   \item [1.]
   Compute the critical point $(\mX^k,\mP^k)$
of the function  $\mathcal{L}(\mX,\mP,\mLambda^k; \rho^k)$  by solving
\beq \label{min_lag}
\begin{split}
 (\mX^k,\mP^k) \triangleq & \underset{ \mathbf{X}, \mathbf{P} \in \mathbb{R}^{N \times N}}{\min}  \quad \mathcal{L}(\mX,\mP,\mLambda^k;\rho^k);
 \end{split}
\eeq
   \item [2.]  Update the multiplier estimates $\mLambda^k$;
   \item [3.] Update the penalty parameter $\rho^k$.
 \end{enumerate}
We will show next how to implement the previous steps, which are described in detail in Algorithm \ref{algorithm:Alg_PAMAL_1}.\\
\noindent \textbf{Computation of the critical points $(\mX^k,\mP^k)$}. The optimal solution  $(\mX^k,\mP^k)$ of problem (\ref{min_lag}) is  computed using an approximate algorithm, i.e.  finding a subgradient point  $\mTheta^k \in \partial \mathcal{L}(\mX^k,\mP^k,\mLambda^k; \rho^k)$ satisfying, with a prescribed tolerance value $\epsilon^k$, the following inequality
\beq \label{norm_ineq}
\parallel  \mTheta^k \parallel_{\infty}\leq \epsilon^k
\eeq
 with $\mP^k \in \mathcal{S}_t$. To evaluate such point, we exploit a coordinate-descent method with proximal regularization based on the PAM method proposed in \cite{Attouch}. More specifically, at the $k$-th outer iteration of the algorithm, we compute $(\mX^k,\mP^k)$ by iteratively solving, at each inner iteration $n$, the following proximal regularization of a two blocks Gauss-Seidel method:
\begin{align}
&\mX^{k,n} =\underset{\mathbf{X} \in \mathbb{R}^{N \times N}, \mx_1=b\mathbf{1} }{{\rm arg}\min} \,  \mathcal{L}(\mX,\mP^{k,n-1},\mLambda^k; \rho^k) \nonumber \\
 & \hspace{3.3cm} +\frac{c_1^{k,n-1}}{2} \parallel \mX-\mX^{k,n-1}\parallel^{2}_{F}  \hspace{0.3cm} (\tilde{\mathcal{P}}_{k,n}) \nonumber \\
&\mP^{k,n} = \hspace{0.5cm} \underset{\mathbf{P} \in \mathbb{R}^{N \times N}}{{\rm arg}\min} \, \hspace{0.5cm} \mathcal{L}(\mX^{k,n-1}, \mP,\mLambda^k; \rho^k) \nonumber  \\
&  \hspace{3.3cm} +\frac{c_2^{k,n-1}}{2} \parallel \mP-\mP^{k,n-1}\parallel^{2}_{F} \hspace{0.3cm} (\tilde{\mathcal{Q}}_{k,n}) \nonumber
\end{align}
where the proximal parameters $c_i^{k,n}$ can be arbitrarily chosen as long as they satisfy
\beq \label{eq:c_bounds}
0<\underline{c}\leq c_i^{k,n}\leq \bar{c} < \infty, \; k,n \in \mathbb{N},\; i=1,2, \;\underline{c}>0,\; \bar{c}>0.
\eeq
The first convex problem $\tilde{\mathcal{P}}_{k,n}$ can be solved through any convex optimization numerical tool, whereas the second
problem in  $\tilde{\mathcal{Q}}_{k,n}$ admits a closed-form solution as stated in the following proposition.
 \begin{proposition}
   Define the matrix  $$\mF\triangleq (c_2^{k,n-1} \mP^{k,n-1}+\rho^k \mX^{k,n}-\mLambda^{k})(\rho^k+c_2^{k,n-1})^{-1}$$ with SVD decomposition
 $\mF=\mQ \mSigma \mT^T$,  where $\mQ, \mT \in \mathbb{R}^{N\times N}$ are unitary matrices, while $\mSigma$
  is a diagonal matrix with entries given by the singular values of $\mF$. The optimal solution
 of the non-convex problem $\tilde{\mathcal{Q}}_{k,n}$ is given by $\mP^{k,n}=\mQ \mT^T$.
 \end{proposition}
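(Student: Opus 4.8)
The plan is to show that, on the Stiefel manifold $\mathcal{S}_t$, problem $\tilde{\mathcal{Q}}_{k,n}$ reduces to a linear trace maximization over orthogonal matrices, whose maximizer is the orthogonal factor of the SVD of $\mF$. This is exactly the situation already settled in Proposition 2 (and in Theorem 2.1 of \cite{Lai2014}), so the same technique transfers verbatim.

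First I would isolate the part of the objective of $\tilde{\mathcal{Q}}_{k,n}$ that actually depends on $\mP$. By the definitions of $f$ and of $\mathcal{L}$, the terms $\text{GDV}(\mX^{k,n})$ and $\delta_{\mathcal{S}_{\mathbf{1}}}(\mx_1)$ are constant with respect to $\mP$, while $\delta_{\mathcal{S}_t}(\mP)$ merely enforces the hard constraint $\mP^T\mP=\mI$. What remains is the smooth functional
\[
g(\mP)=\langle \mLambda^k, \mP-\mX^{k,n}\rangle+\frac{\rho^k}{2}\|\mP-\mX^{k,n}\|_F^2+\frac{c_2^{k,n-1}}{2}\|\mP-\mP^{k,n-1}\|_F^2,
\]
to be minimized subject to $\mP^T\mP=\mI$.

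Second, I would exploit the key simplification that on the constraint set $\|\mP\|_F^2=\text{tr}(\mP^T\mP)=\text{tr}(\mI)=N$ is constant. Expanding the two squared Frobenius norms, every term quadratic in $\mP$ collapses to the constant $\tfrac{1}{2}(\rho^k+c_2^{k,n-1})N$, and only the linear part survives. Collecting those terms and using $\langle \mLambda^k,\mP\rangle=\text{tr}(\mP^T\mLambda^k)$, one finds that, up to an additive constant,
\[
g(\mP)\;\equiv\;\text{tr}\!\left(\mP^T\big(\mLambda^k-\rho^k\mX^{k,n}-c_2^{k,n-1}\mP^{k,n-1}\big)\right)=-(\rho^k+c_2^{k,n-1})\,\text{tr}(\mP^T\mF),
\]
where $\mF$ is precisely the matrix defined in the statement. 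Since $\rho^k+c_2^{k,n-1}>0$, minimizing $g$ over $\mathcal{S}_t$ is equivalent to the orthogonal Procrustes problem $\max_{\mP^T\mP=\mI}\text{tr}(\mP^T\mF)$. I would then invoke the SVD $\mF=\mQ\mSigma\mT^T$ together with the von Neumann trace inequality: setting $\mZ\triangleq\mT^T\mP^T\mQ$, which is orthogonal whenever $\mP$ is, one obtains $\text{tr}(\mP^T\mF)=\text{tr}(\mSigma\mZ)=\sum_{i}\sigma_i Z_{ii}\le\sum_i\sigma_i$, because $\sigma_i\ge0$ and $|Z_{ii}|\le1$ for an orthogonal $\mZ$. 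Equality holds iff $Z_{ii}=1$ for every $i$, i.e. $\mZ=\mI$, which forces $\mP=\mQ\mT^T$; hence $\mP^{k,n}=\mQ\mT^T$ is the global maximizer, proving the claim.

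I expect the only delicate point to be this trace-maximization step, namely justifying $\text{tr}(\mP^T\mF)\le\sum_i\sigma_i$ with equality exactly at $\mP=\mQ\mT^T$; everything preceding it is bookkeeping built on the fact that $\|\mP\|_F^2$ is fixed on the manifold. Since this is the very argument already used for the quadratic subproblem $\mathcal{Q}_k$ in Proposition 2, it can be reproduced directly or simply referred to \cite[Thm.~2.1]{Lai2014}.
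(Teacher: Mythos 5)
Your proof is correct, and its core is the same argument the paper uses: reduce the objective on the Stiefel manifold to the linear functional $-(\rho^k+c_2^{k,n-1})\tr(\mP^T\mF)$ using $\|\mP\|_F^2=N$, then bound $\tr(\mP^T\mF)=\tr(\mZ\mSigma)\le\tr(\mSigma)$ via $|Z_{ii}|\le 1$ for orthogonal $\mZ$. The only structural difference is that the paper first derives the candidate $\mP^\star=\mQ\mT^T$ by writing out the KKT conditions (solving $\mP\mB=\mF$ with $\mB=\mT\mSigma\mT^T$) and only then verifies global optimality with the trace inequality; you go straight to the Procrustes maximization, which makes the KKT computation logically superfluous. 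That is a legitimate economy, not a gap. Two small caveats that apply equally to the paper's version: the ``equality iff $\mZ=\mI$'' uniqueness claim needs all singular values strictly positive (i.e.\ $\mF$ nonsingular), otherwise $\mQ\mT^T$ is still a maximizer but not necessarily the unique one; and note the paper's appendix actually states $\tilde{\mathcal{Q}}_{k,n}$ with $\mX^{k,n-1}$ while the proposition and Algorithm~3 use the Gauss--Seidel update $\mX^{k,n}$ --- your choice matches the proposition, and the argument is unaffected either way.
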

 \begin{proof}
 See Appendix \ref{A:closed_form}.
 \end{proof}\\
Algorithm \ref{algorithm:Alg_PAMAL_1} describes the outer loop of the PAMAL method whereas in  Algorithm \ref{algorithm:Alg_PAMAL_2} we report the inner iterations needed to solve problems $\tilde{\mathcal{P}}_{k,n}$ and $\tilde{\mathcal{Q}}_{k,n}$ in step $1$ of Algorithm \ref{algorithm:Alg_PAMAL_1}.\\
The inner iterations are terminated when there exists a subgradient point $\mTheta^{k,n} \in \partial \mathcal{L}(\mX^{k,n},\mP^{k,n},\mLambda^k; \rho^k)$ satisfying
     $\parallel \mTheta^{k,n} \parallel_{\infty} \leq \epsilon^k$, $\mP^{k,n} \in \mathcal{S}_t$, where
     $\mTheta^{k,n}\triangleq(\mTheta^{k,n}_1,\mTheta^{k,n}_2)$ with the subgradients given by
\beq \label{thetak}
\begin{array}{lll}
\mTheta^{k,n}_1=c_1^{k,n-1}(\mX^{k,n-1}-\mX^{k,n})+\rho^{k}(\mP^{k,n-1}-\mP^{k,n})\smallskip\\
\mTheta^{k,n}_2=c_2^{k,n-1}(\mP^{k,n-1}-\mP^{k,n}).
\end{array}
\eeq

\noindent \textbf{Update of the multipliers and penalty coefficients.}
The rule for updating the multipliers matrix in Step 2 of Algorithm \ref{algorithm:Alg_PAMAL_1}
needs some further discussion.  We adopt the classical first-order approximation by imposing that the estimates of multipliers must be bounded. Then, we explicitly  project the multipliers matrix on the  compact box  set
 ${\mathcal{T}}\! \triangleq \{\mLambda \!  :\! \mLambda_{min} \!\leq \! \mLambda \leq \mLambda_{max} \}$
 with   $-\infty<[\mLambda_{min}]_{i,j}\leq [\mLambda_{max}]_{i,j}<\infty$, $\forall i,j$.
The boundedness of the multipliers is a fundamental assumption needed to preserve the property that global minimizers of the original problem are obtained if each outer iteration of the penalty method computes a global minimum of the subproblem. Unfortunately, assumptions that imply boundedness of multipliers tend to be very strong and often hard to be verified. Nevertheless, following \cite{Chen}, \cite{Andreani}, \cite{Birgin}, we also impose the boundedness of the multipliers. This implies that, in the convergence proofs, we will assume that the true multipliers fall within the bounds imposed by the algorithm, see, e.g. \cite{Chen}. Regarding the setting of the remaining parameters of the proposed algorithm, we will assume that: i) the sequence of positive tolerance parameters $\{\epsilon^k\}_{k\in \mathbb{N}}$ is chosen such that $\lim_{k\rightarrow \infty} \epsilon^k=0$; ii) the penalty parameter $\rho^k$ is updated according to the infeasibility degree by following the rule described in step 3 of  Algorithm \ref{algorithm:Alg_PAMAL_1} \cite{Chen}, \cite{Bertsekas}.

 \begin{algorithm}[t]
\small
 {Given} {the} {parameters} $\{\epsilon^{k}\}_{k \in \mathbb{N}}$, $0<\epsilon^{k}<1$, $\tau \in [0,1)$, $\gamma>1$, $k=1$, $\rho^k >0$,
$\mLambda^{k} \in \mathbb{R}^{N \times N}$, $\mLambda_{min}\leq \mLambda^{k} \leq \mLambda_{max}$.

 {\textbf{Repeat}}

{\textbf{Step.1:}} {Compute} $(\mX^k,\mP^k)$ as in Algorithm \ref{algorithm:Alg_PAMAL_2} such that there exists

    \quad          $\mTheta^k \in \partial \mathcal{L}(\mX^k,\mP^k,\mLambda^k; \rho^k)$ with
     $\parallel \mTheta^k \parallel_{\infty} \leq \epsilon^k$, $(\mP^{k})^T \mP^k=\mI$.

   {\textbf{Step.2:}} Update the multiplier estimates \smallskip

   \quad \quad  \quad \quad \quad \quad $\mLambda^{k+1}=[\mLambda^k+\rho^k(\mP^k-\mX^k)]_{\mathcal{T}}$\smallskip

    \quad \quad \quad    where $[\cdot]_{\mathcal{T}}$ is the projection on ${\mathcal{T}}\! \triangleq \! \{\mLambda \!  :\! \mLambda_{min} \!\leq \! \mLambda \leq \mLambda_{max} \}$.

   {\textbf{Step.3:}} Set $\mR^k=\mP^k-\mX^k$, and update the penalty parameter as\smallskip

   \quad \quad  \quad \quad $\rho^{k+1}=\left\{ \begin{array}{lll} \rho^k  \quad \; \text{if} \quad \parallel \mR^k \parallel_{\infty} \leq \tau \parallel \mR^{k-1} \parallel_{\infty} \\ \gamma \rho^k \; \; \text{otherwise} \end{array},\right.$

  \quad \quad  \quad \quad      $k=k+1$,

{\textbf{until}}   {\textbf{convergence}.}

       \caption{\!\!: PAMAL method}
 \label{algorithm:Alg_PAMAL_1}
\end{algorithm}

\noindent \textbf{Convergence Analysis.} We now discuss in details the convergence properties of the proposed PAMAL method.
Assume that: i) the proximal parameters $\{c_{i}^{k,n}\}_{\forall k,n}$ are arbitrarily chosen as long as they satisfy (\ref{eq:c_bounds}); ii) the sequence $\{\epsilon^k\}_{k\in \mathbb{N}}$ is chosen such that $\lim_{k\rightarrow \infty} \epsilon^k=0$; iii) the penalty parameter $\rho^k$ is updated according to the rule described in Algorithm \ref{algorithm:Alg_PAMAL_1}.
The PAM method, as given in Algorithm \ref{algorithm:Alg_PAMAL_2}, guarantees global convergence to a critical point \cite[Th. 6.2]{Attouch}, provided that the penalty parameters $\{\rho^k\}_{k\in \mathbb{N}}$ in Algorithm \ref{algorithm:Alg_PAMAL_1} satisfy some mild conditions, as stated in the following theorem.
\begin{theorem}\label{thm:Th1}
 Denote by $\{(\mX^{k,n},\mP^{k,n})\}_{n \in \mathbb{N}}$ the sequence generated by Algorithm 3.
 The function $\mathcal{L}_k$ in (\ref{Lagrange}) satisfies the Kurdyka-{\L}ojasiewicz (K-{\L}) property\footnote{The reader can refer to    Appendix \ref{B:proof Th1} for a definition of the Kurdyka-{\L}ojasiewicz (K-{\L}) property.}.
 Then $\mTheta^{k,n}$ defined by (\ref{thetak}) satisfies
 \beq
 \mTheta^{k,n} \in \partial \mathcal{L}(\mX^{k,n},\mP^{k,n}, \mLambda^{k}; \rho^k), \quad \forall n \in \mathbb{N}.
 \eeq
Also, if $\gamma>1$, $\rho^1>0$, for each $k \in \mathbb{N}$, it holds
\beq
\parallel \mTheta^{k,n} \parallel_{\infty}\rightarrow 0, \;\; \text{as} \; n\rightarrow \infty.
\eeq
\end{theorem}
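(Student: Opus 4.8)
The plan is to establish the three assertions of the theorem in turn: the Kurdyka--{\L}ojasiewicz (K--{\L}) property of $\mathcal{L}_k$, the subgradient membership of $\mTheta^{k,n}$, and its vanishing as $n\to\infty$. First I would fix the outer index $k$ (so that $\mLambda^k$ and $\rho^k$ are constants) and write $\mathcal{L}_k=\mathcal{L}(\cdot,\cdot,\mLambda^k;\rho^k)$ from (\ref{Lagrange}). To obtain the K--{\L} property I would argue that $\mathcal{L}_k$ is proper, lower semicontinuous, and semi-algebraic: $\text{GDV}(\mX)$ is a finite sum of terms $a_{ji}[x_i-x_j]_+$ and hence piecewise linear; the coupling term $\langle\mLambda^k,\mP-\mX\rangle$ and the penalty $\tfrac{\rho^k}{2}\|\mP-\mX\|_F^2$ are polynomial; the set $\mathcal{S}_{\mathbf{1}}=\{\pm b\mathbf{1}\}$ is finite and the Stiefel manifold $\mathcal{S}_t=\{\mP:\mP^T\mP=\mI\}$ is the zero set of a polynomial map, so the indicators $\delta_{\mathcal{S}_{\mathbf{1}}}$ and $\delta_{\mathcal{S}_t}$ are semi-algebraic. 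Since a finite sum of semi-algebraic functions is semi-algebraic, so is $\mathcal{L}_k$, and every proper lower semicontinuous semi-algebraic function satisfies the K--{\L} property \cite{Attouch},\cite{Bolte}; lower semicontinuity and properness follow because $\mathcal{S}_t$ is compact and $\mathcal{S}_{\mathbf{1}}$ is finite.

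For the membership $\mTheta^{k,n}\in\partial\mathcal{L}(\mX^{k,n},\mP^{k,n},\mLambda^k;\rho^k)$, I would write the first-order stationarity conditions of the two proximal subproblems $\tilde{\mathcal{P}}_{k,n}$ and $\tilde{\mathcal{Q}}_{k,n}$. Because $\mathcal{L}_k$ decomposes as a $C^1$ (indeed polynomial) coupling-plus-penalty part added to the block-separable nonsmooth part $\text{GDV}(\mX)+\delta_{\mathcal{S}_{\mathbf{1}}}(\mx_1)+\delta_{\mathcal{S}_t}(\mP)$, the limiting subdifferential obeys the sum rule and splits blockwise as $\partial\mathcal{L}=(\partial_{\mX}\mathcal{L},\partial_{\mP}\mathcal{L})$. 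Stationarity of the $\mX$-step furnishes an element of $\partial_{\mX}[\text{GDV}+\delta_{\mathcal{S}_{\mathbf{1}}}](\mX^{k,n})$; substituting it into $\partial_{\mX}\mathcal{L}$ evaluated at the current pair $(\mX^{k,n},\mP^{k,n})$, the multiplier term cancels, while the penalty --- evaluated at $\mP^{k,n-1}$ in the subproblem but at $\mP^{k,n}$ in $\mathcal{L}_k$ --- contributes the residual $\rho^k(\mP^{k,n-1}-\mP^{k,n})$, which together with the proximal residual $c_1^{k,n-1}(\mX^{k,n-1}-\mX^{k,n})$ is exactly $\mTheta^{k,n}_1$ in (\ref{thetak}). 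For the $\mP$-step the penalty is evaluated at the same $\mX^{k,n}$ in both the subproblem and $\mathcal{L}_k$, so the multiplier and penalty terms cancel completely, leaving only the proximal residual $c_2^{k,n-1}(\mP^{k,n-1}-\mP^{k,n})=\mTheta^{k,n}_2$. This yields the inclusion for every $n$.

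Finally, to prove $\|\mTheta^{k,n}\|_\infty\to 0$ as $n\to\infty$, I would invoke the convergence of the proximal alternating minimization of Algorithm~\ref{algorithm:Alg_PAMAL_2}. With $\gamma>1$ and $\rho^1>0$ the penalty $\rho^k$ is a fixed positive number at each outer step, so on the compact set $\mathcal{S}_t$ and finite set $\mathcal{S}_{\mathbf{1}}$ the function $\mathcal{L}_k$ is coercive in $(\mX,\mP)$ --- the quadratic penalty dominates the linear multiplier term while $\text{GDV}(\mX)\ge 0$ --- hence bounded below with compact sublevel sets. Together with the proximal-parameter bounds (\ref{eq:c_bounds}) and the K--{\L} property just established, the hypotheses of \cite[Th.~6.2]{Attouch} are met, so the sequence $\{(\mX^{k,n},\mP^{k,n})\}_{n}$ has finite length and converges, forcing the successive increments $\mX^{k,n-1}-\mX^{k,n}$ and $\mP^{k,n-1}-\mP^{k,n}$ to tend to $\mathbf{0}$. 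Since $c_1^{k,n-1},c_2^{k,n-1}\le\bar c$ by (\ref{eq:c_bounds}) and $\rho^k$ is fixed, formula (\ref{thetak}) then gives $\|\mTheta^{k,n}\|_\infty\to 0$.

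I expect the main obstacle to be the careful verification of the hypotheses of the PAM convergence theorem \cite[Th.~6.2]{Attouch} in this nonconvex, nonsmooth setting --- in particular confirming that $\mathcal{L}_k$ is bounded below and that the generated sequence stays in a bounded, hence relatively compact, set, which rests on the compactness of $\mathcal{S}_t$ and on the coercivity in $\mX$ induced by the penalty. The blockwise subdifferential sum rule used in the second step also needs justification, but it is routine once the $C^1$-plus-separable structure of $\mathcal{L}_k$ is noted.
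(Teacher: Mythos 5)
Your proposal is correct and follows essentially the same route as the paper's proof: establishing the K--{\L} property via semi-algebraicity of each summand of $\mathcal{L}_k$ (with the graph of $[x_i-x_j]_+$ handled exactly as you describe), proving coercivity of $\mathcal{L}_k$ from the compactness of $\mathcal{S}_t$ and the dominance of the quadratic penalty in $\mX$, and then invoking the PAM convergence theory of Attouch et al. to drive the successive increments, and hence $\mTheta^{k,n}$, to zero. The only difference is one of detail: the paper defers the subgradient-membership computation to the corresponding proposition in the PAMAL reference, whereas you derive formula (\ref{thetak}) explicitly from the first-order conditions of $\tilde{\mathcal{P}}_{k,n}$ and $\tilde{\mathcal{Q}}_{k,n}$, which is a faithful expansion of the cited argument rather than a different approach.
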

\begin{proof}
See Appendix \ref{B:proof Th1}.
\end{proof}

The convergence claim for Algorithm  \ref{algorithm:Alg_PAMAL_1} to a stationary solution of problem $\mathcal{P}_e$ is stated in the following theorem.
\begin{theorem} \label{thm:Th2}
 Let $\{(\mX^k,\mP^k)\}_{k \in \mathbb{N}}$ be the sequence generated by Algorithm 2.  Suppose $\rho^1>0$ and $\gamma>1$. Then, the
set of limit points of $\{(\mX^k,\mP^k)\}_{k \in \mathbb{N}}$ is non-empty, and every limit point satisfies the KKT conditions of the original problem $\mathcal{P}_e$.
\end{theorem}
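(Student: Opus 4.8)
The plan is to read Theorem~\ref{thm:Th2} as a specialization of the safeguarded augmented Lagrangian convergence theory to the objective $f(\mX,\mP)=\text{GDV}(\mX)+\delta_{\mathcal{S}_{\mathbf{1}}}(\mx_1)+\delta_{\mathcal{S}_t}(\mP)$ and the linear coupling $\mathbf{H}(\mX,\mP)=\mP-\mX$. First I would record the structural facts that license the machinery: $f$ is proper and lower semicontinuous (GDV is finite and continuous, and $\delta_{\mathcal{S}_{\mathbf{1}}},\delta_{\mathcal{S}_t}$ are l.s.c. because $\mathcal{S}_{\mathbf{1}}$ and $\mathcal{S}_t$ are closed); the map $\mathbf{H}$ is linear; and Theorem~\ref{thm:Th1} guarantees that Step~1 of Algorithm~\ref{algorithm:Alg_PAMAL_1} is well posed, producing $(\mX^k,\mP^k)$ with $(\mP^k)^T\mP^k=\mI$ and a subgradient $\mTheta^k\in\partial\mathcal{L}(\mX^k,\mP^k,\mLambda^k;\rho^k)$ with $\|\mTheta^k\|_\infty\le\epsilon^k$ and $\epsilon^k\to 0$. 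The proof then breaks into three claims: non-emptiness of the limit set, feasibility of every limit point, and the KKT property at every limit point.

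Non-emptiness follows from compactness once feasibility is in hand. Since $\mP^k\in\mathcal{S}_t$ we have $\|\mP^k\|_F=\sqrt{N}$, so $\{\mP^k\}$ is bounded; and because $\mX^k=\mP^k-\mR^k$ with $\mR^k:=\mP^k-\mX^k$, any control on the infeasibility $\mR^k$ transfers boundedness to $\{\mX^k\}$ and lets me extract a convergent subsequence of $(\mX^k,\mP^k)$. For feasibility I would use the dichotomy dictated by the penalty update in Step~3. If $\{\rho^k\}$ remains bounded, then, since every failed test multiplies $\rho^k$ by $\gamma>1$, the penalty is raised only finitely often; hence for all large $k$ the test $\|\mR^k\|_\infty\le\tau\|\mR^{k-1}\|_\infty$ with $\tau\in[0,1)$ is satisfied, so $\|\mR^k\|_\infty\to 0$ geometrically. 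If instead $\rho^k\to\infty$, I would show that every limit point is a stationary point of the infeasibility measure $\tfrac12\|\mP-\mX\|_F^2$ over the lower-level set $\{\mP\in\mathcal{S}_t,\ \mx_1=b\mathbf{1}\}$; since this set is nonempty and the original problem is feasible (complete $b\mathbf{1}/\sqrt{N}$ to an orthonormal basis and take $\mX=\mP$ equal to it), the minimal infeasibility is zero and every such stationary point is in fact feasible.

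For the KKT claim, take a limit point $(\mX^\star,\mP^\star)$ along a subsequence; feasibility already gives $\mP^\star=\mX^\star$. Introducing the un-projected multiplier $\hat{\mLambda}^k:=\mLambda^k+\rho^k\mR^k$, for which $\mLambda^{k+1}=[\hat{\mLambda}^k]_{\mathcal{T}}$, the approximate stationarity of $\mathcal{L}$ rewrites as $\mTheta^k_{(\mX)}\in\partial\text{GDV}(\mX^k)+\mathcal{N}_{\mathcal{S}_{\mathbf{1}}}(\mx_1^k)-\hat{\mLambda}^k$ and $\mTheta^k_{(\mP)}\in\mathcal{N}_{\mathcal{S}_t}(\mP^k)+\hat{\mLambda}^k$, so $\hat{\mLambda}^k$ serves as the multiplier estimate. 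I would then argue that a subsequence of $\hat{\mLambda}^k$ converges to some $\mLambda^\star$: on the columns not pinned by $\mathcal{S}_{\mathbf{1}}$ this component equals a uniformly bounded GDV-subgradient minus $\mTheta^k_{(\mX)}\to\mathbf{0}$, while the a priori bound $\mLambda^k\in\mathcal{T}$ (the boundedness assumption flagged in the text, cf.~\cite{Chen},\cite{Andreani},\cite{Birgin}) controls the rest. Passing to the limit and invoking the closed-graph (outer semicontinuity) property of $\partial\text{GDV}$, $\mathcal{N}_{\mathcal{S}_{\mathbf{1}}}$ and $\mathcal{N}_{\mathcal{S}_t}$ --- available because GDV is convex and the two sets are closed --- yields $\mathbf{0}\in\partial\text{GDV}(\mX^\star)+\mathcal{N}_{\mathcal{S}_{\mathbf{1}}}(\mx_1^\star)-\mLambda^\star$ and $\mathbf{0}\in\mathcal{N}_{\mathcal{S}_t}(\mP^\star)+\mLambda^\star$, which together with $\mP^\star=\mX^\star$ are precisely the KKT conditions of $\mathcal{P}_e$; the affine nature of $\mathbf{H}$ is what dispenses with any constraint qualification.

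The step I expect to be the main obstacle is the case $\rho^k\to\infty$: proving that the infeasibility still vanishes and, more delicately, that the estimates $\hat{\mLambda}^k$ do not diverge. This is exactly where the boundedness of the projected multipliers $\mLambda^k\in\mathcal{T}$ and the uniform boundedness of $\partial\text{GDV}$ (a consequence of GDV being a finite sum of the $1$-Lipschitz maps $[x_i-x_j]_+$) are indispensable, and where the first column --- frozen at $b\mathbf{1}$ on the $\mX$-side while the matching column of $\mP^k$ only interacts with the Stiefel normal cone --- must be handled separately from the others. A secondary, purely bookkeeping, difficulty is that every stationarity relation has to be interpreted with limiting (Clarke) subdifferentials throughout, since both GDV and the indicator terms are non-smooth; it is the closedness of these subdifferentials under the limit that makes the final KKT identification rigorous.
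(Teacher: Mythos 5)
The paper does not actually carry out this proof: it defers entirely to \cite[Th.~3.1--3.5]{Chen}, and your proposal is essentially a reconstruction of that standard safeguarded augmented-Lagrangian argument (boundedness from the Stiefel constraint, the bounded-versus-unbounded dichotomy on $\rho^k$, the unprojected multiplier $\hat{\mLambda}^k=\mLambda^k+\rho^k\mR^k$ as the KKT multiplier estimate, and outer semicontinuity of the subdifferentials to pass to the limit). So in structure you are on the same track as the cited proof, and the KKT identification step, including the observation that the affine coupling $\mathbf{H}$ removes the need for a constraint qualification and that $\partial\,\text{GDV}$ is uniformly bounded, is sound.

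There is, however, one genuine logical gap in your feasibility argument for the case $\rho^k\to\infty$. You write that every limit point is a stationary point of the infeasibility measure $\tfrac12\|\mP-\mX\|_F^2$ over the lower-level set, and then conclude feasibility because ``the minimal infeasibility is zero.'' That inference is invalid: the lower-level set is nonconvex (it contains the Stiefel manifold), so a stationary point of the infeasibility measure need not attain its global minimum, and zero optimal value does not rule out infeasible stationary points. Indeed, characterizing the normal space $N_{\mathcal{S}_t}(\mP)=\{\mP\mS:\mS=\mS^T\}$ shows that stationarity forces $\mp_j=\mx_j$ on the free columns $j\ge 2$ but only forces $\mp_1=\pm b\mathbf{1}$ on the pinned column, so an infeasible stationary point with $\mp_1^\star=-\mx_1^\star$ is not excluded by your argument. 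The repair is to avoid the generic ``stationarity of infeasibility'' route on the free columns altogether: from the approximate stationarity in $\mX$, $\rho^k(\mX^k-\mP^k)_{:,2:N}=-\bigl(\partial\,\text{GDV}(\mX^k)\bigr)_{:,2:N}+\mLambda^k_{:,2:N}+\mTheta^k_{(\mX),2:N}$, and the right-hand side is uniformly bounded (Lipschitzness of GDV, $\mLambda^k\in\mathcal{T}$, $\epsilon^k\to 0$), so $\rho^k\to\infty$ directly yields $\|(\mR^k)_{:,2:N}\|\to 0$; the first column then requires a separate argument through the $\mP$-stationarity and the structure of $N_{\mathcal{S}_t}$, which is precisely the delicate point you flagged but did not resolve. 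As stated, your proof establishes feasibility only up to this sign ambiguity on the pinned column, which is also the weakest point of the argument the paper inherits from \cite{Chen}.
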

\begin{proof}
The proof follows similar arguments as in \cite[Th. 3.1-3.5]{Chen}, and thus is omitted due to space limitation.
\end{proof}

\begin{algorithm}[t]
\small
 {Let} $(\mX^{1,0},\mP^{1,0})$ be any finite initialization.  For $k\geq 2$, set $(\mX^{k,0},\mP^{k,0})=(\mX^{k-1},\mP^{k-1})$,  $n=0$.

 {\textbf{Repeat}}

  {\textbf{Step.1:}} Set $n=n+1$. {Compute} $\mX^{k,n}$ by solving  problem $\tilde{\mathcal{P}}_{k,n}$.

   {\textbf{Step.2:}}  $\mP^{k,n}=\mQ \mT^T$ where $\mQ,\mT$ come from the following SVD

   \quad \quad  \quad decomposition

   \quad \quad  \quad \quad \quad \quad $\mQ \mSigma \mT^T=\frac{c_2^{k,n-1} \mP^{k,n-1}+\rho^k \mX^{k,n}-\mLambda^{k}}{\rho^k+c_2^{k,n-1}}$.

   {\textbf{Step.3:}} Set  $(\mX^k,\mP^k)=(\mX^{k,n},\mP^{k,n})$, $\mTheta^k=\mTheta^{k,n}$,

{\textbf{until}}   $\parallel \mTheta^{k,n} \parallel_{\infty} \leq \epsilon^k$.

       \caption{\!\!: PAM method for solving step $1$ in Algorithm \ref{algorithm:Alg_PAMAL_1}}
 \label{algorithm:Alg_PAMAL_2}
\end{algorithm}

\noindent \textbf{Remark $1$}. Note that both Algorithms $1$ and $3$ at each step of their loops have to compute
the SVD of an $N \times N$ matrix. Therefore, at each iteration their computational cost is proportional to $\mathcal{O}(N^3)$. So, clearly, there is a complexity issue that deserves further investigations to enable the application to large size graphs.
In this paper, we have not investigated methods to reduce the complexity of the approach exploiting, for instance, the sparsity of the graphs under analysis. Also, we have not optimized the selection of the parameters involved in both SOC and PAMAL methods. However, even if complexity is an issue, the proposed approach is more numerically stable than the only method available today for the analysis of directed graphs, based on the Jordan decomposition.

\noindent \textbf{Remark $2$}.
The two alternative methods proposed above to solve the non-convex problem $\mathcal{P}$ are robust to random initializations, as testified also by the numerical results presented in the sequel. In terms of implementation complexity, SOC algorithm is easier to code even though,
  to the best of our knowledge, a theoretical proof of its convergence is still lacking.
\vspace{-0.1cm}
\section{Minimization of balanced total variation}\vspace{-0.01cm}
\label{section:balanced}
The minimization of the total variation as in (\ref{prob_main}) is inspired by the min-cut problem. However, in some cases, this might favor the appearance of very sparse vectors or of very small clusters, possibly also isolated nodes. One way to prevent these undesired solutions  passes through the introduction of the {\it balanced cut} \cite{Malik2000}, \cite{Hein2011}.
A popular definition for the balanced cut of undirected graph is the Cheeger cut \cite{Cheeger_70}, which is given by:
\beq \label{bal_cheeger}
\min_{\mathcal{S}\subseteq \mathcal{V}} \; \ds \frac{\text{cut}(\mathcal{S},\bar{\mathcal{S}})}{\min(|\mathcal{S}|, |\bar{\mathcal{S}}|)}.
\eeq
Note that $\min(|\mathcal{S}|,|\bar{\mathcal{S}}|)$ attains its maximum when  $|\mathcal{S}|= |\bar{\mathcal{S}}|=N/2$, so that,
for a given value of $\text{cut}(\mathcal{S},\bar{\mathcal{S}})$, the minimum occurs when $\mathcal{S}$ and $\bar{\mathcal{S}}$ have approximately equal size.
While the problem stated above is NP-hard, a
tight continuous relaxation of the balanced cut problems has recently been shown to provide excellent clustering results
\cite{Hein2011,Hein2010,Bresson2010}. In \cite{Bresson2010},\cite{Bresson2012}   it was proved that the balanced
Cheeger cut problem in (\ref{bal_cheeger}) for undirected graphs admits the following \emph{exact} continuous relaxation
\beq \label{exact_rel}
\min_{\mx \in \mathbb{R}^N} \ds \frac{\sum_i \sum_{j, i>j} a_{j i} \mid x_i-x_j\mid}{\sum_i \mid x_i-\text{m}(\mx) \mid}
\eeq
where $\text{m}(\mx)$ stands for the median value of $\mx$.
Note  that since it holds $\sum_i \mid x_i-\text{m}(\mx) \mid=0$,   $ \forall \mx \in \text{span}\{\mathbf{1}\}$,  problem  (\ref{exact_rel}) is well-defined if $\mx \perp \mathbf{1}$. Then, the problem in (\ref{exact_rel}) can be recast as:
\beq \label{exact_rel1}
\min_{\mx \in \mathbb{R}^N, \mx \perp \mathbf{1}} \ds \frac{\sum_i \sum_{j, i>j} a_{j i} \mid x_i-x_j\mid}{\sum_i \mid x_i-\text{m}(\mx) \mid}.
\eeq
\begin{algorithm}[t]
\small
\quad {\textbf{For}} $k=2,\ldots,N$

\quad \quad \text{Set} $n=0$, $\mx_k^n=\mx^0$  nonzero vector with $\text{m}(\mx_k^n)=0$,

\quad \quad  $\alpha>0$, $0< \epsilon\ll 1$.

\quad \quad {\textbf{Repeat}}

\quad \quad \quad   $\bw^n \in \text{sign}(\mx_k^n)$,

\quad \quad \quad          $\bv^{n}=\bw^n-\text{mean}(\bw^n)\mathbf{1}$,

\quad \quad \quad     $\bh^n=\mx_k^n+\alpha \bv^n$,

\quad \quad \quad       $\hat{\mx}_k^{n+1}=\underset{\mx_k \in \mathcal{X}^b_k}{\arg \min} \;f(\mx_k)+\ds \frac{\text{E}(\mx_k^n)}{2 \alpha}{\parallel \mx_k-\bh^n  \parallel}_2^2 $,

\quad \quad \quad          $\my_k^{n+1}=\hat{\mx}_k^{n+1}-\text{m}(\hat{\mx}_k^{n+1})$,

\quad \quad \quad          $\mx_k^{n+1}=\ds \frac{{\my}_k^{n+1}}{{\parallel {\my}_k^{n+1}  \parallel}_2 }$, $n=n+1$,

    \quad \quad {\textbf{until}} $\mid\text{E}(\mx_k^n)-\text{E}(\mx_k^{n-1})\mid <\epsilon$,

\quad \quad        $\mx_k^{n+1}=\ds \frac{\hat{\mx}_k^{n+1}}{{\parallel \hat{\mx}_k^{n+1}  \parallel}_2 }$,

\quad {\textbf{end}.}
       \caption{: Balanced graph signal variation}
 \label{algorithm:Alg_balanced}

\end{algorithm}\vspace{-0.02cm}
In \cite{Bresson2010} it was proved that (\ref{exact_rel}) is an exact relaxation of the Cheeger cut problem and, for any minimizer
$\mx$, there is a number $\nu$ such that, $\forall i$, the binary  solution ${x}_{\nu}(i)=1$ if ${x}(i)>\nu$ and ${x}_{\nu}(i)=0$ for ${x}(i)\leq \nu$, is also a minimizer of the Cheeger cut problem. Then, from the equivalence of problems  (\ref{exact_rel}) and (\ref{exact_rel1}), this result holds true also for any minimizer of (\ref{exact_rel1}).
In the sequel, we formulate the problem of finding the Fourier basis minimizing the balanced total variation in both cases
of directed and undirected graphs. To this end, let us define the  function
\beq
\text{E}(\mx_k)\triangleq \frac{ f(\mx_k)}{\sum_i \mid x_k(i)-\text{m}(\mx_k) \mid}
\eeq
where  $f(\mx_k)=\text{GAV}(\mx_k)$ in (\ref{TV-undirected}), or $f(\mx_k)=\text{GDV}(\mx_k)$ in (\ref{TV-directed}), in case of  undirected or directed graphs, respectively. According to problem (\ref{exact_rel}),  we can  find  a set of  Fourier bases $\{\mx_k\}_{k=1}^{N}$, with $\mx_1=b \mathbf{1}$, by iteratively solving, for
$k=2,\ldots,N$, the following problem
\beq
\begin{split}
\underset{\mx_k\in \mathbb{R}^N}{\min}& \hspace{0.5cm }\ds \text{E}(\mx_k) \hspace{2.3cm} (\mathcal{P}^b_k)\\
\mbox{s.t.}\hspace{0.3cm } &  \begin{array}{lll}
 \quad {\mx_k}^T {{\mx}_{\ell}}=\delta_{k,\ell}, \;\;  \ell=1,\ldots,k.  \end{array}
\end{split}
\eeq
\begin{figure*}[t!]
   \centering
    \begin{subfigure}[b]{0.28\textwidth}
    \centering \hspace{-0.6cm}
        \includegraphics[width=2.2in, height=1.66in]{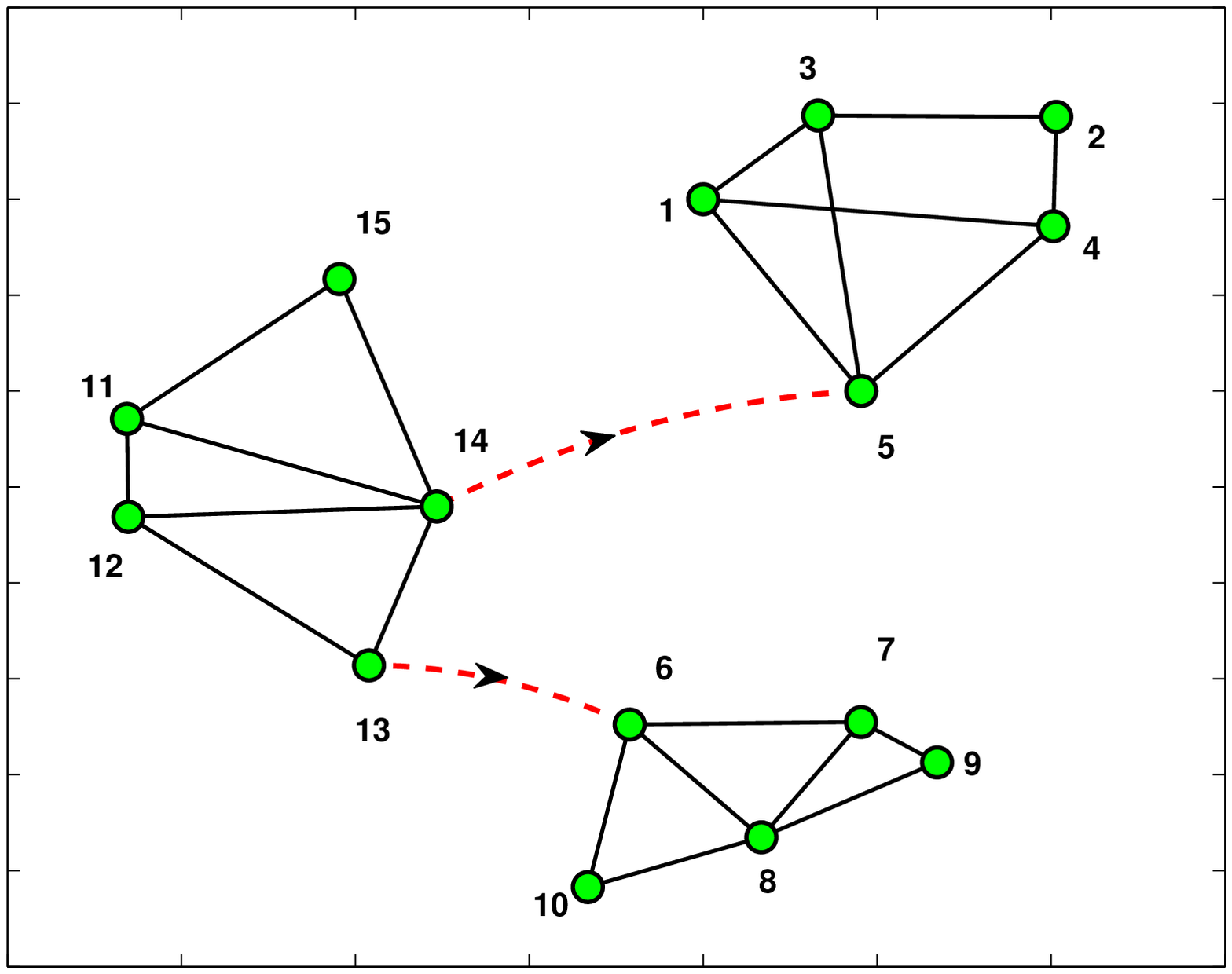}
        \caption{}\label{fig:graphs}
            \end{subfigure}%
            \qquad
              \begin{subfigure}[b]{0.28\textwidth}
        \includegraphics[width=\textwidth]{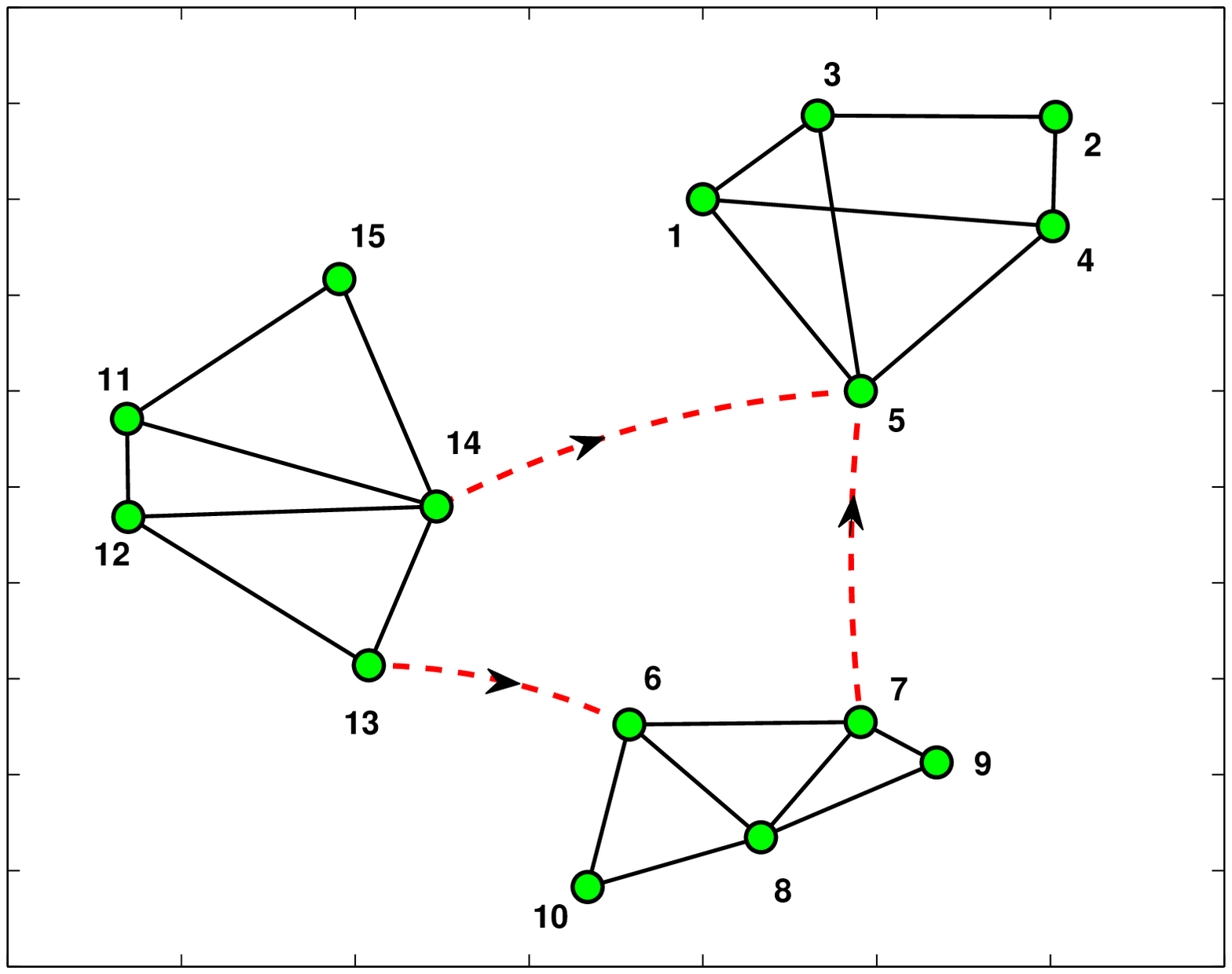}
          \caption{}\label{fig:graphs_3}
           \end{subfigure}%
           \qquad
            \begin{subfigure}[b]{0.28\textwidth}
        \includegraphics[width=\textwidth]{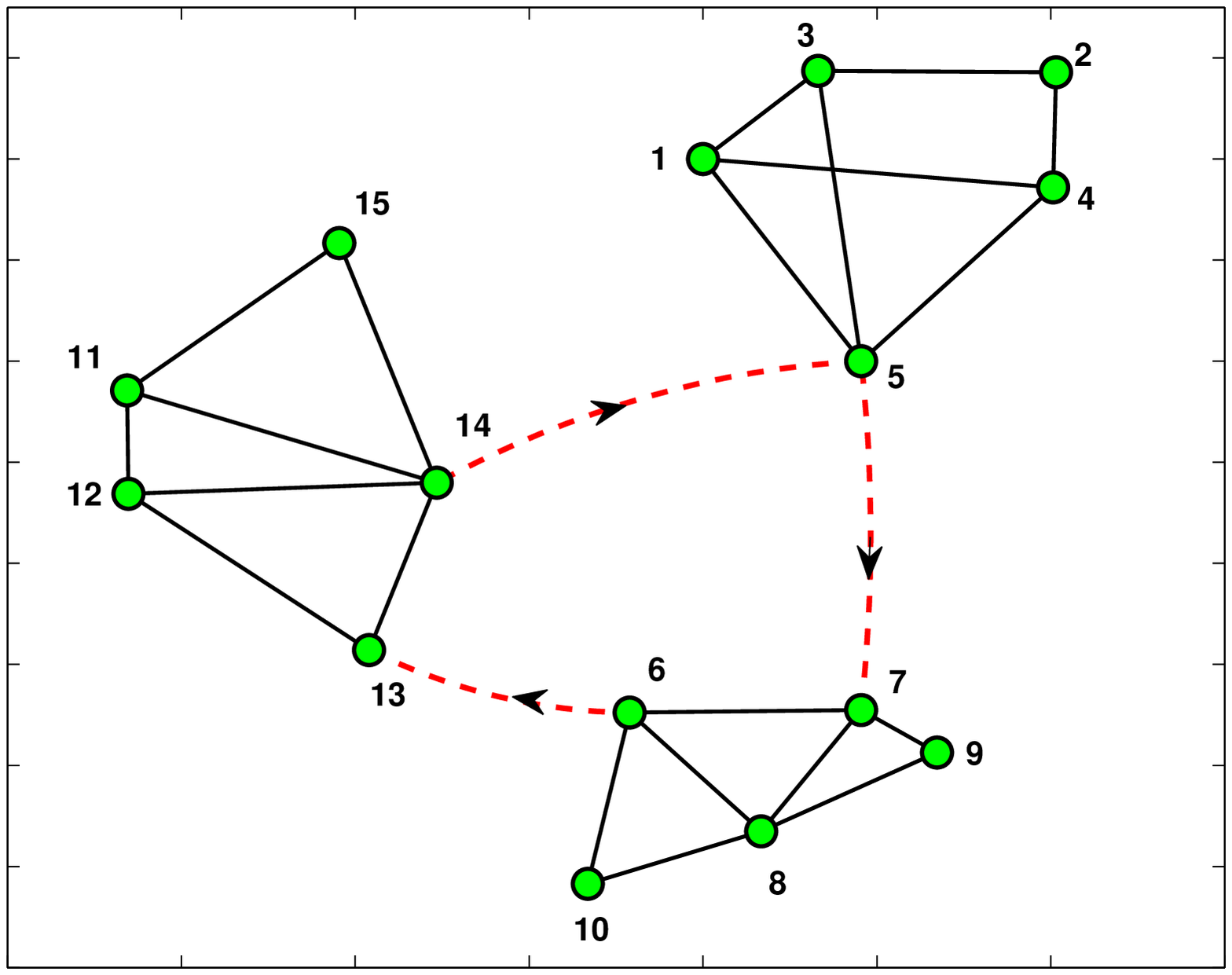}
        \caption{}\label{fig:graphcycle}
           \end{subfigure}%
    \caption{ Examples of graphs with: $(\textit{a})$  $2$ directed links; $(\textit{b})$  $3$ directed links; $(\textit{c})$  $1$ directed cycle.}\label{fig:allgraphs}
   \end{figure*}
Note that problem $\mathcal{P}^b_k$ is non-convex in both the constraints set and the objective function.
Recently, several algorithms \cite{Malik2000}, \cite{Bresson2010}, \cite{Hein2010}, \cite{Hein2011}, have been proposed to minimize relaxations of the balanced cut problem that are similar to (\ref{exact_rel}). Typically, these algorithms give excellent numerical performance, although theoretical convergence proofs are not available.  For instance, in \cite{Bresson2012}, the authors proposed an algorithm minimizing (\ref{exact_rel}), along with a  proof of convergence to a critical point of the original problem. This method is a new steepest descent algorithm based on the explicit-implicit gradient \cite{Boyd} of the function  $\text{E}(\mx_k)\triangleq \frac{ f(\mx_k)}{\text{B}(\mx_k)}$  where  $\text{B}(\mx_k)=\sum_i \mid x_k(i)-\text{m}(\mx_k) \mid$.  The explicit-implicit subgradient of the non-smooth function $\text{E}(\mx_k)$ is given by \vspace{-0.1cm}
\beq \label{eq_sub}
\frac{\mx_{k}^{n+1}-\mx_{k}^{n}}{\tau^n}=-\frac{\partial_{\mx_{k}} f(\mx_{k}^{n+1})-\text{E}(\mx_{k}^{n}) \partial_{\mx_{k}} \text{B}(\mx_{k}^{n})}{\text{B}(\mx_{k}^{n})}\vspace{-0.1cm}
\eeq
or
\beq \label{eq: x_n+1}
\mx_{k}^{n+1}=\mx_{k}^{n}-\tau^n  \frac{\partial_{\mx_{k}} f(\mx_{k}^{n+1})}{\text{B}(\mx_{k}^{n})}+ \tau^n \frac{ \text{E}(\mx_{k}^{n})}{\text{B}(\mx_{k}^{n})}\partial_{\mx_{k}} \text{B}(\mx_{k}^{n}).
\eeq
Let us now consider  the following proximal minimization problem
\beq \label{eq:proximal}
\mx_{k}^{n+1}\triangleq \underset{\mx_{k} \in \mathbb{R}^N}{\arg \min}  \; f(\mx_{k})+\ds \frac{\text{B}(\mx_{k}^{n})}{2 \tau^n}\| \mx_{k}- \mg^{n}\|^2\,.
\eeq
 Any stationary solution of (\ref{eq:proximal}) will be  also  solution of the subgradient equation
\beq
\ds \frac{\tau^n}{\text{B}(\mx_{k}^{n})}\partial_{\mx_{k}} f(\mx_{k})+ \mx_{k}- \mg^{n}=0,
\eeq
so that at step $n+1$ one gets
\beq  \label{eq_xn+1}
\mx_{k}^{n+1}=\mg^{n}-\ds \frac{\tau^n}{\text{B}(\mx_{k}^{n})} \partial_{\mx_{k}} f(\mx_{k}).
\eeq
Replacing  in this last equality the  expression of $\mx_{k}^{n+1}$ given in (\ref{eq: x_n+1}), we  obtain
the following  set of two equations to be iteratively updated:
\beq \nonumber
\begin{array}{lll}
 &\mg^{n}= \mx_{k}^{n}+\tau^{n} \ds\frac{\text{E}(\mx_{k}^{n})}{\text{B}(\mx_{k}^{n})}\bw^{n} \quad \text{with} \quad \bw^{n} \in \partial_{\mx_{k}} \text{B}(\mx_{k}^{n}) \medskip \\
  &\mx_{k}^{n+1}= \underset{\mx_{k} \in \mathcal{X}^b_k}{\arg \min}  \; f(\mx_{k})+\ds \frac{\text{B}(\mx_{k}^{n})}{2 \tau^n}\| \mx_{k}- \mg^{n}\|^2 \label{eq:proximal1}
\end{array}
\eeq
where we define $\mathcal{X}^b_k\triangleq \{ \mx_k\in \mathbb{R}^N \; : \; {\mx_k}^T {{\mx}_{\ell}}=0, \;\text{for} \;  \ell=1,\ldots, k-1\}$. Note that $\mathcal{X}^b_k$ is a set of linear constraints since, for each vector $\mx_k$, the previously computed vectors ${\mx}_{\ell}$, for $\ell=1,\ldots, k-1$, are assumed to be known. The norm one constraint is satisfied through a simple projection of the
optimal solution on the unitary sphere.
 As shown in \cite{Bresson2010}, \cite{Bresson2012}, the algorithm decreases the objective function and preserves the zero mean properties of the successive iterates. It was also observed in \cite{Bresson2012} that a faster convergence rate can be achieved when the step size is chosen as $\tau^{n}= \alpha \frac{\text{B}(\mx_k^{n})}{\text{E}(\mx_k^{n})}$ with $\alpha>0$.

The formal description of the iterative optimization method is given in Algorithm \ref{algorithm:Alg_balanced}, where we denote by $\text{sign}(\ma)$ and $\text{mean}(\ma)$, respectively, the element-wise sign   and the mean value of a vector $\ma$.
The convergence analysis of the algorithm to a critical point of $\text{E}$ was derived in \cite{Bresson2010},\cite{Bresson2012} for undirected graphs. However, since for directed graphs $f(\mx_k)$ preserves all the required properties (i.e., it is non-smooth and convex), the convergence results in \cite{Bresson2010},\cite{Bresson2012}  hold also for the minimization of the balanced directed variation.\vspace{-0.1cm}
\section{Numerical results}\vspace{-0.01cm}
\label{section:numerical_results}
In this section, we present   some numerical results to  assess the effectiveness of the proposed strategy for building the GFT basis.
First, we illustrate some examples of application and then we compare the proposed approach with alternative definitions of GFT basis,
as given in \cite{Shuman2013}, \cite{Moura2014}, \cite{Singh2016}.
In all our experiments, the parameters of SOC and PAMAL methods are set  as (unless stated otherwise): $\beta=100$, $\tau=0.5$, $\gamma=1.5$,
$\rho^1=50$, $\epsilon^k=(0.9)^k, \forall k\in \mathbb{N}$, $\mLambda_{\text{min}}=-1000 \cdot \mI$
$\mLambda_{\text{max}}=1000 \cdot \mI$, $\mLambda^{1}=\mathbf{0}$, $\underline{c}=c_i^{k,n}=\bar{c}=0.5$, $\forall i,k,n$.

\noindent \textbf{Examples of bases for directed graphs.}
For the sake of understanding the structure of the GFT basis vectors obtained with our methods, we start  considering the  simple directed graphs depicted  in  Fig. \ref{fig:allgraphs}, i.e. a directed graph composed of $N=15$ nodes with  three clusters, connected by a) $2$ directed links, b) $3$ directed links, and  c) a directed  cycle.
 As a first example, in Fig. \ref{fig:fdirtree} we report the basis vectors $\{\mx_k\}_{k=1}^{15}$ obtained through Algorithm \ref{algorithm:Alg_PAMAL_1}
 for graph (a) in Fig.  \ref{fig:allgraphs}.
The intensity of the vector entries is encoded in the color associated to each vertex.
Directed and undirected edges are represented by arrowed and continuous lines, respectively.
The order chosen to plot the basis vectors corresponds to increasing values of the directed variation $\text{GDV}(\mx_k)$ (reported on top of each subgraph). It is possible to notice that the basis vectors tend to identify clusters and, furthermore, the value assumed by the basis vectors within each cluster is exactly constant. This is a useful property in view of applications to unsupervised or semi-supervised clustering, where the label (signal) associated to each cluster is exactly constant within the cluster. This property does not hold with current methods based on the eigenvectors of either Laplacian or adjacency matrices, whose behavior within each cluster is only smooth but not exactly constant. To grasp the reason for this difference, it is worth noticing that, in case of undirected graphs, the above property is a consequence of having minimized an $\ell_1$-norm (see, e.g., (\ref{TV-undirected})), rather than an $\ell_2$-norm, as in the case of the Laplacian eigenvectors.  It is interesting to remark from Fig. \ref{fig:fdirtree} how there are three basis vectors that yield a zero directed variation. In particular, besides the constant vector $\bx_1$, vectors $\bx_2$ and $\bx_3$, even if not constant, yield zero variation just by assigning values to the entries of the cluster $\{11 \div 15\}$ smaller than the values of clusters $\{1 \div 5\}$ and $\{6 \div 10\}$. Since there is no directed edge between clusters $\{1 \div 5\}$ and $\{6 \div 10\}$, there are two ways to enforce the previous property, still maintaining vector orthogonality.
As a further example, let us consider graph (b) in Fig. \ref{fig:allgraphs}, where we added a directed link from node $7$ to node $5$.  From Fig. \ref{fig:fdirtree2} we observe that, in this case, the number of basis vectors having zero directed variation reduce to two, since the presence of the new directed link
leads to  only one  possible way, besides the constant vector,  to have $\text{GDV}=0$ still preserving basis orthogonality.
\begin{figure}[t!]
\centering
       \begin{subfigure}[t]{1.5\textwidth}\hspace{-1.05cm}
            {\includegraphics[width=3.8in,height=2.9in]{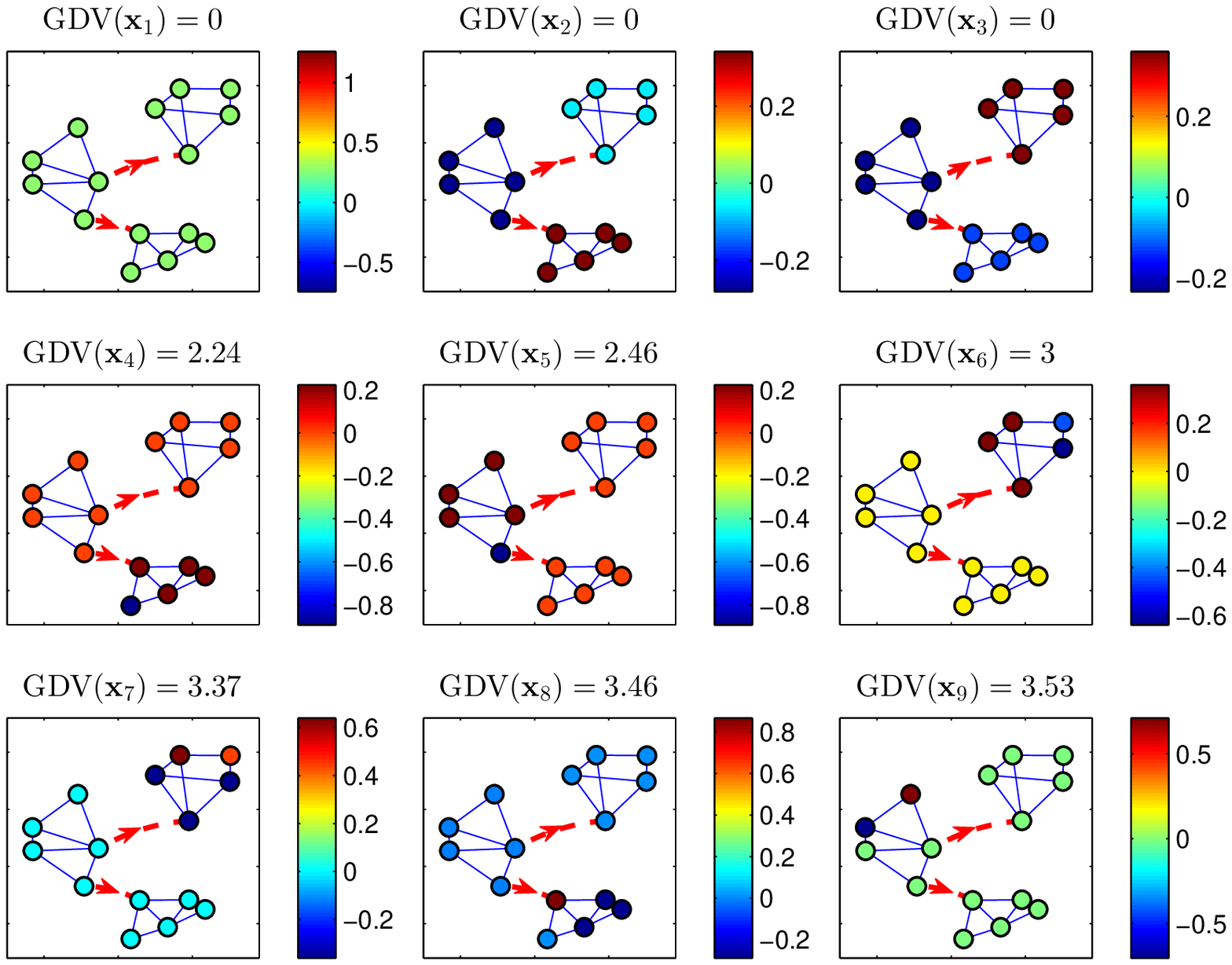}}
            \end{subfigure}\hspace{-0.3cm}
   \begin{subfigure}[b]{1.4\textwidth}\hspace{-0.65cm}
        \includegraphics[width=3.66in, height=1.95in]{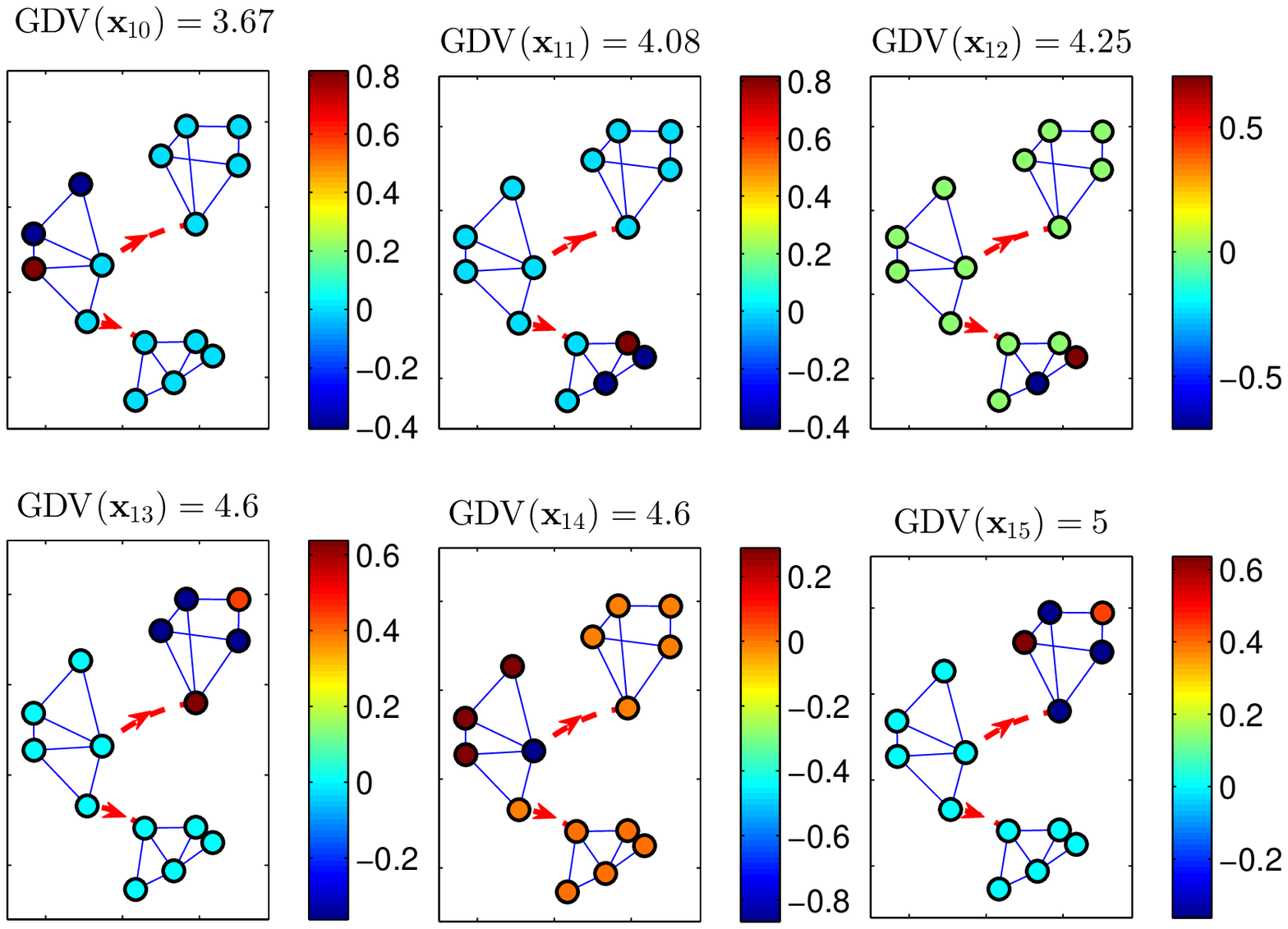}
           \end{subfigure}
    \caption{Optimal basis vectors $\mx_k$, $k=1,\ldots,15$  for Algorithm $2$ and the directed  graph in Fig. \ref{fig:graphs}.}\label{fig:fdirtree}
       \end{figure}
\begin{figure}[t!]
\centering
        \begin{subfigure}[t]{1.5\textwidth}\hspace{-0.6cm}
        \includegraphics[width=3.7in, height=2.9in]{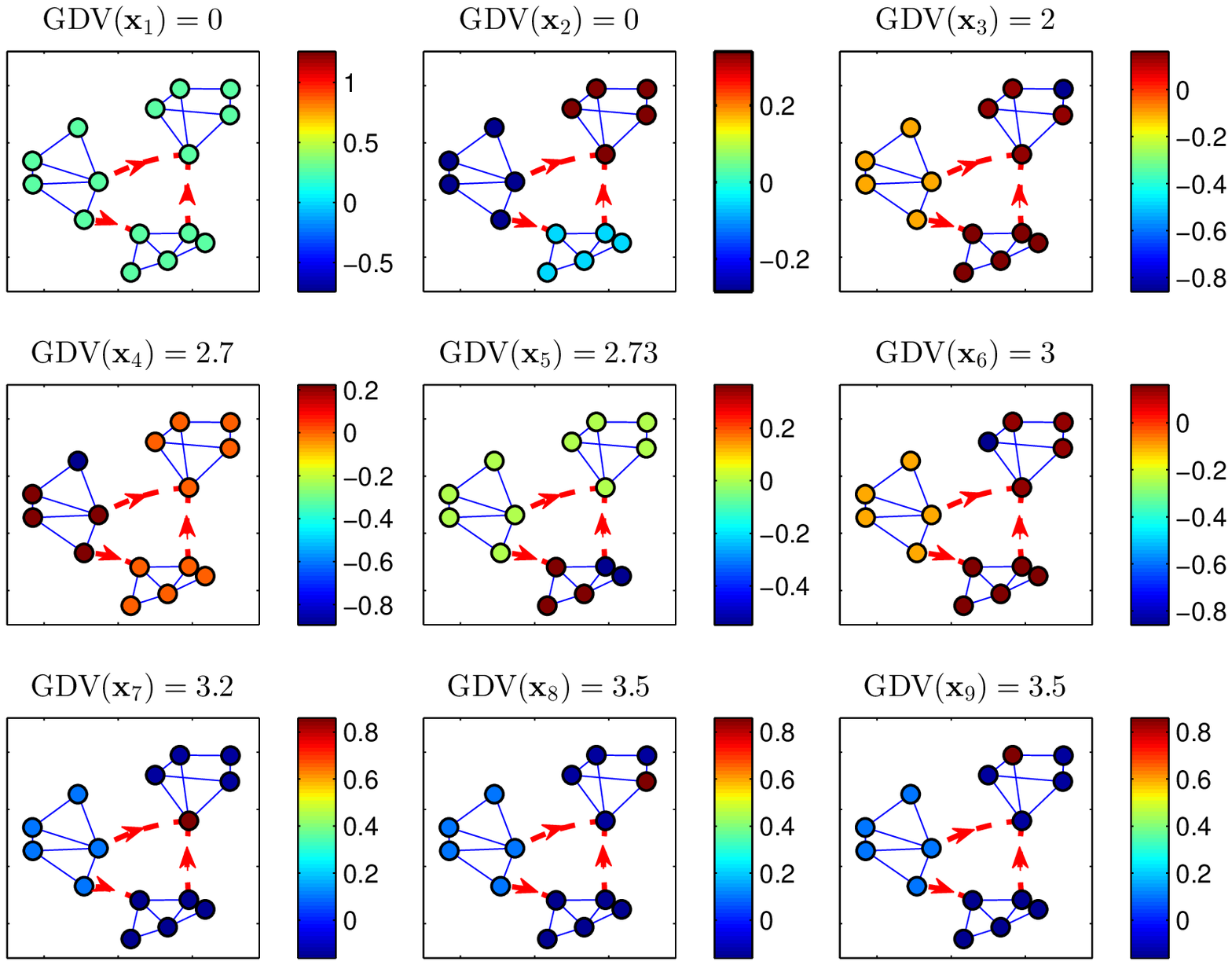}
            \end{subfigure}
            \hspace{-0.4cm}
              \begin{subfigure}[b]{1.5\textwidth}\hspace{-0.6cm}
        \includegraphics[width=3.73in, height=1.95in]{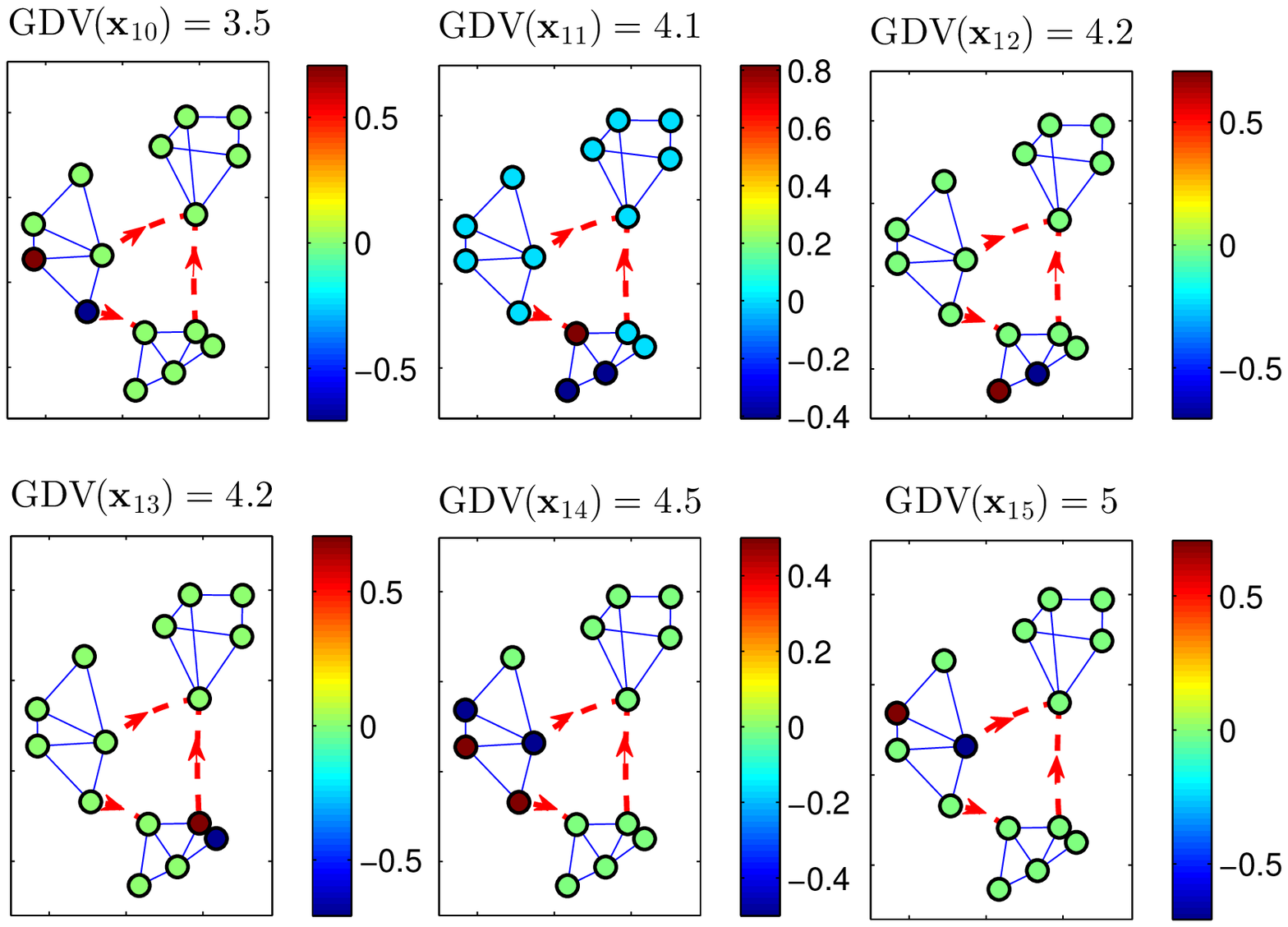}
           \end{subfigure}
    \caption{Optimal basis vectors $\mx_k$, $k=1,\ldots,15$  for Algorithm $2$ and the graph in Fig. \ref{fig:graphs_3}.}\label{fig:fdirtree2}
       \end{figure}
In Fig. \ref{fig:fcyclegraph}, we report the optimal basis, computed using Algorithm  \ref{algorithm:Alg_PAMAL_1},
for   the graph with a directed cycle depicted in Fig.  \ref{fig:graphcycle}. Interestingly, in this case, there can only be one vector that yields zero directed variation: the constant vector. In fact, the cyclical structure of the graph now prevents the existence of non-constant vectors able to null the directed variation.
The properties described above are a unique and an interesting consequence of the edge directivity. In fact, as can be observed from Fig. \ref{fig:f_und_graph}, the optimal bases for the corresponding undirected graph (obtained by simply removing edge directivity)  have only one vector with zero variation, the constant vector. Conversely, in the case shown before, we have had three, two, and one vectors yielding zero variation.

\noindent \textbf{Convergence test.}
Since the optimization problem $\mathcal{P}$ is non-convex, there is of course the possibility that the proposed methods fall into a local minimum. Furthermore, while PAMAL method guarantees convergence, SOC algorithm might also fail to converge because, theoretically speaking, there is no convergence analysis. To test what happens, we considered several independent initializations of both SOC and PAMAL algorithms in the search for a basis for the graph of Fig. \ref{fig:graphs}.  In Fig. \ref{fig:figconv}, we report the average behavior ($\pm$  the standard deviation) of the directed variation versus the iteration index $m$, which counts the overall number of  (outer and inner) iterations for Algorithm \ref{algorithm:Alg_SOC} and  \ref{algorithm:Alg_PAMAL_1}. The curves refer to $200$ independent initializations of algorithms SOC and PAMAL, using the same initialization for both.
We can observe that in all cases the algorithms converge but indeed there is a spread in the final variation, meaning that both methods can incur into local minima. Nonetheless, the spread is quite limited, which suggests that bases associated to different local minima behave similarly in terms of total variation.
Additionally,  since the PAMAL algorithm solves the orthogonality constrained, non-convex problem by  iteratively updating the primal variables and the multipliers, the objective function evaluated at each (inner and outer) iteration
does not necessarily follow a monotonous decay, as can be noticed by the lower subplot in Fig. \ref{fig:figconv}.\\
\begin{figure}[t!]%
\centering
      \begin{subfigure}[t]{1.5\textwidth}\hspace{-1.1cm}
            \includegraphics[width=3.8in, height=2.9in]{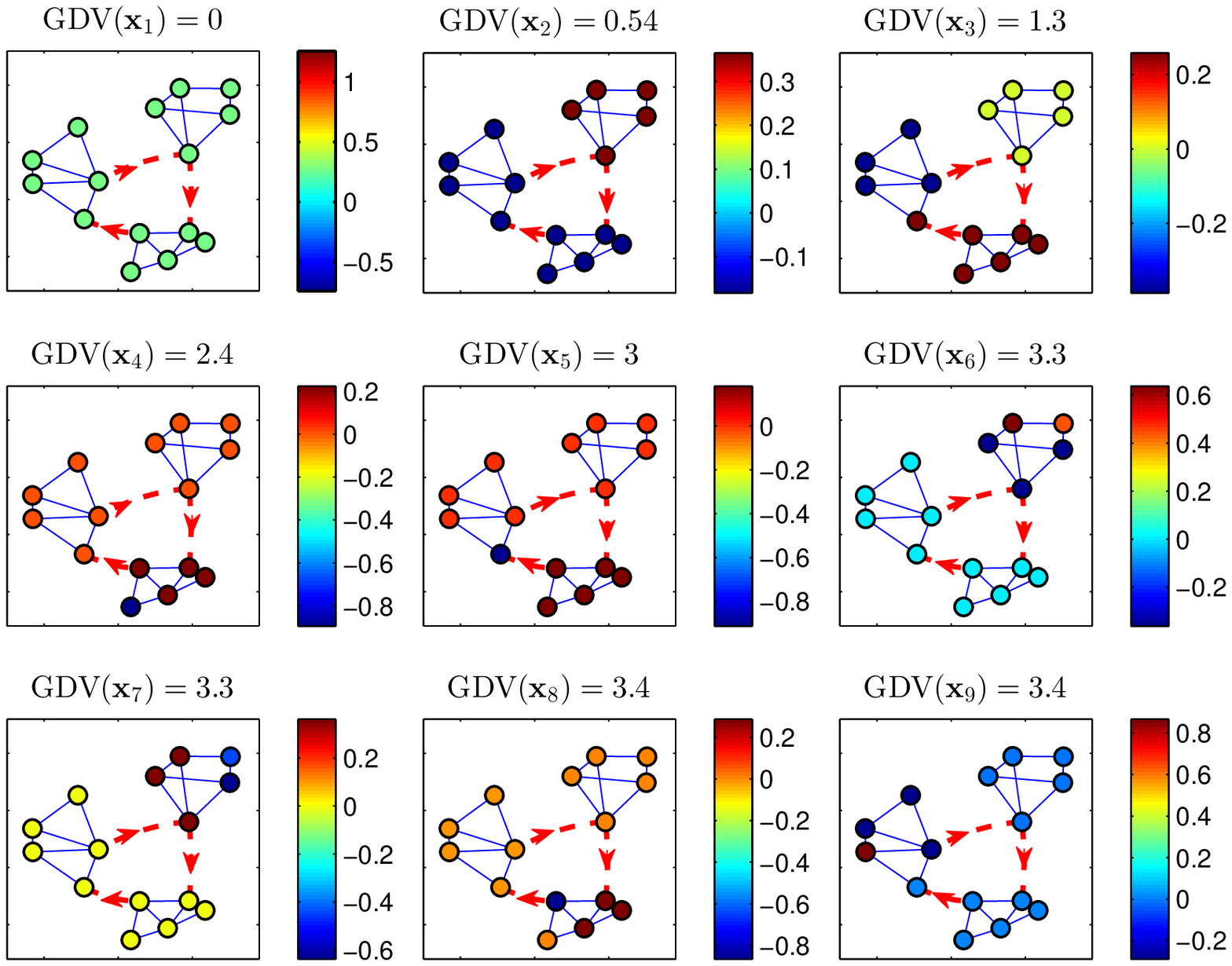}\vspace{-0.5cm}
         \end{subfigure}%
        \hspace{-0.7cm}%
        \begin{subfigure}[b]{1.4\textwidth} \hspace{-1.1cm}
           \includegraphics[width=3.83in, height=1.95in]{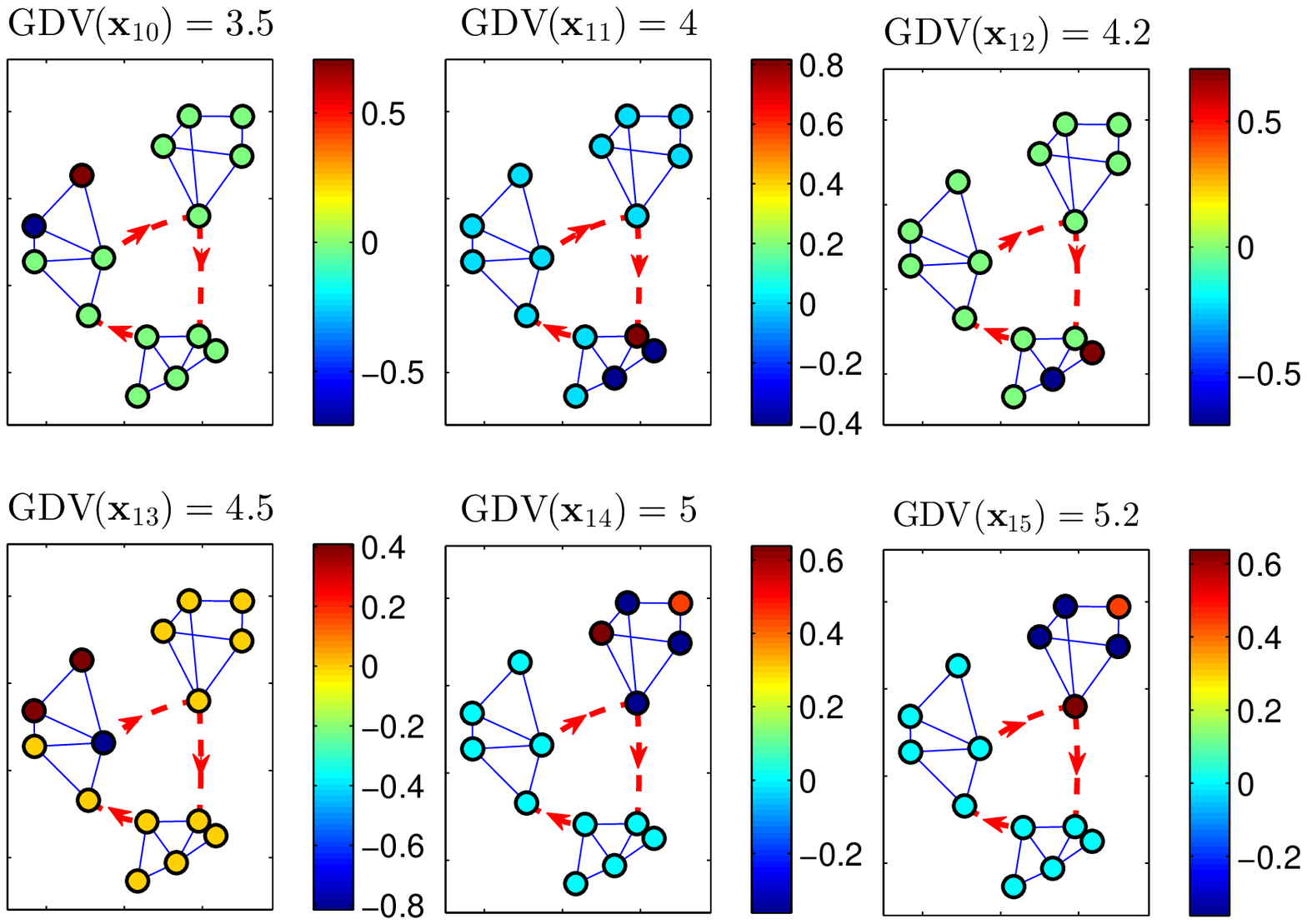}
           \end{subfigure}
    \caption{Optimal basis vectors $\mx_k$, $k=1,\ldots,15$  for Algorithm $2$ and the  graph in Fig. \ref{fig:graphcycle}.}\label{fig:fcyclegraph}
    \end{figure}
\begin{figure}[t!] %
\centering
       \vspace{0.3cm}\hspace{0.6cm}
    \begin{subfigure}[t]{1.5\textwidth}\hspace{-0.4cm}
        \includegraphics[width=3.5in, height=2.8in]{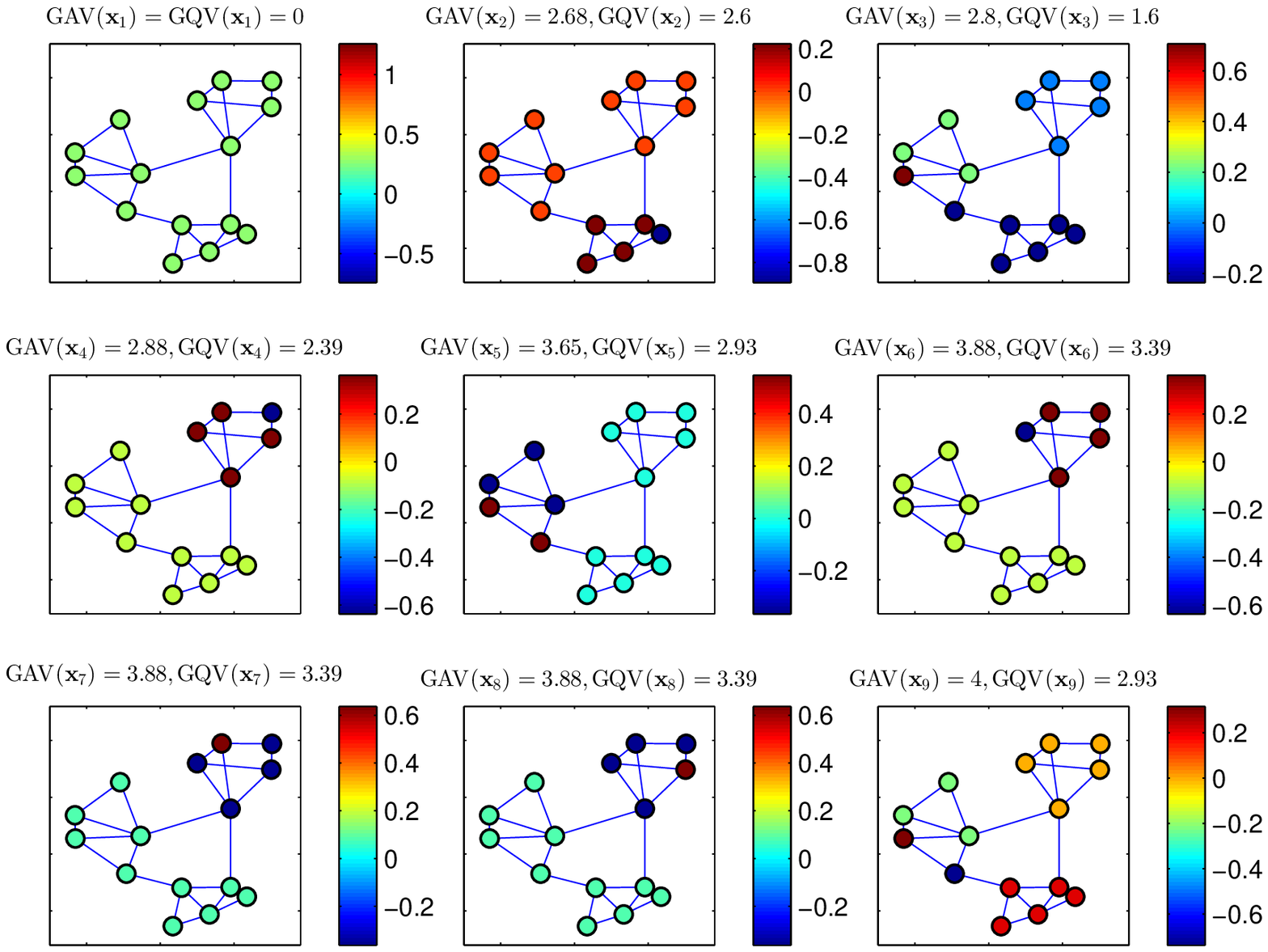}\vspace{0.37cm}
         \end{subfigure}%
         \hspace{-0.4cm}%
   \begin{subfigure}[b]{1.4\textwidth}\hspace{-0.65cm}
              \includegraphics[width=3.6in, height=1.75in]{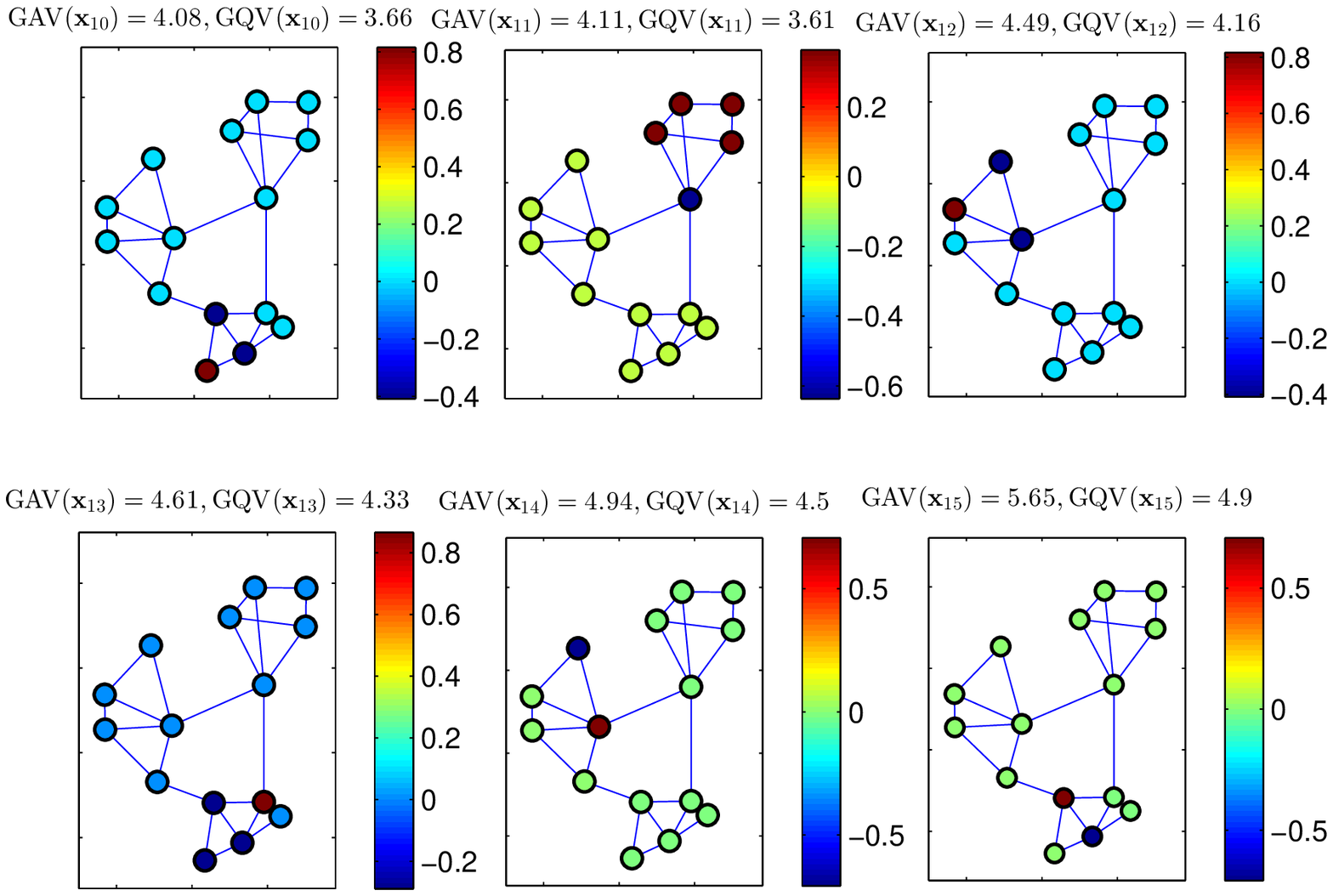}
           \end{subfigure}
    \caption{Optimal basis vectors $\mx_k$, $k=1,\ldots,15$  for Algorithm $2$ and the  undirected counterpart of the graph in Fig. \ref{fig:graphcycle}.}\label{fig:f_und_graph}
   \end{figure}
\noindent \textbf{Comparison with alternative GFT bases.}
We compare now the GFT basis found with our methods with the bases associated to either the Laplacian or the adjacency matrix, as proposed in \cite{Shuman2013},\cite{Moura2014} and references therein. To compare the results, we applied all algorithms to several independent realizations of random graphs. We chose as family of random graphs the so called scale-free graphs, as they are known to fit many situations of practical interest \cite{Albert_statisticalmechanics}. In the generation of random scale-free graphs, it is possible to set the minimum degree $d_{min}$ of each node.
To  compare our method with the GFT definition proposed in \cite{Moura2013},  since the eigenvectors of an asymmetric matrix can be complex and the directed total variation $\text{GDV}$, as defined in (\ref{TV-directed}), does not represent a valid metric for complex vectors, we restricted the comparison to {\it undirected} scale-free graphs, in which case the adjacency and Laplacian matrices are real and symmetric, so that  their eigenvectors are real.
   In the sequel, we will use the notations $\text{GAV}(\mX):= \sum_{k=1}^{N} \text{GAV}(\mx_k)$ and  $\text{GQV}(\mX):= \sum_{k=1}^{N} \text{GQV}(\mx_k)$ to denote, respectively,  the total graph absolute and quadratic  variation of a matrix $\mX$.
 In Fig. \ref{fig:figMouramed}, we compare the following metrics: a)  $\text{GAV}(\mX^{*})$, derived by solving problem $\mathcal{P}$  through the SOC and PAMAL methods; b) $\text{GAV}(\mV)$,  where $\mV$ are the eigenvectors of the adjacency matrix according to the GFT defined in (\ref{V-1x}); c) $\text{GAV}(\mU)$,  where $\mU$ are the eigenvectors of the Laplacian  matrix by assuming the GFT as in  (\ref{Ux}), that for  undirected graphs is equivalent to the  GFT defined in (\ref{V-L}). More specifically, Fig. \ref{fig:figMouramed} shows the previous metrics vs. the minimum degree of the graph averaged over $100$ independent realizations of scale-free graphs of $N=20$ nodes.
As we can notice from  Fig. \ref{fig:figMouramed},  the bases built using SOC and PAMAL algorithms yield a significantly lower total variation than the conventional bases built with either adjacency or Laplacian eigenvectors. This is primarily due to the fact that our optimization methods tend to assign constant values within each cluster.  Finally, in Fig. \ref{fig:fig6new}  we compare the alternative basis vectors using as performance metric the  $\text{GQV}$. So, in Fig. \ref{fig:fig6new} we report the $\text{GQV}(\mX^{*})$ metric derived from the SOC and PAMAL methods with $\text{GQV}(\mV)$ and
$\text{GQV}(\mU)$  obtained, respectively, from the eigenvectors of the adjacency and the Laplacian matrix. Again, the results are averaged over
$100$ independent realizations of scale-free graphs, vs. the average minimum degree, under the same settings  of Fig. \ref{fig:figMouramed}.
 Interestingly,  even if our basis vectors  $\mX^{*}$ do not coincide with $\mV$ or $\mU$, they provide the same $\text{GQV}$, within negligible numerical inaccuracies. Indeed, the invariance of the metric $\text{GQV}(\mX)$,  for any square, orthogonal matrix $\mX$, can be easily proved  from the equality  $\text{GQV}(\mX)= \sum_{k=1}^{N} \mx^T_k \mL \mx_k=\text{trace}(\mX^T \mL \mX)$,  by observing that  $\text{trace}(\mX^T \mL \mX)=\text{trace}(\mL)$ for any orthogonal matrix $\mX$.
 Interestingly, this implies that, for undirected graphs, our orthogonal matrix $\mX^{*}$ can be obtained by applying an orthogonal transform to the Laplacian eigenvectors basis.
 
\begin{figure}[t!]
\centering
              \includegraphics[width=3.3in]{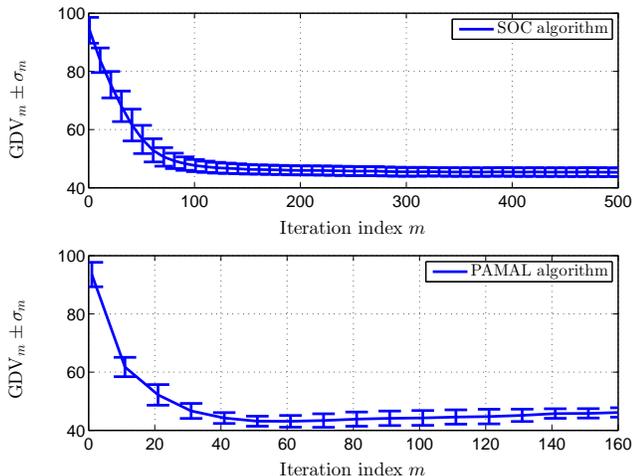}
           \caption{Average directed variation ($\pm$  the standard deviation) for SOC and PAMAL methods vs. the iteration index $m$ for the graph of Fig. \ref{fig:graphs}, by averaging  over $200$ random initializations of the algorithms.}\label{fig:figconv}
\end{figure}
 \begin{figure}[!] \centering
\includegraphics[width=3.3in,height=2.6in]{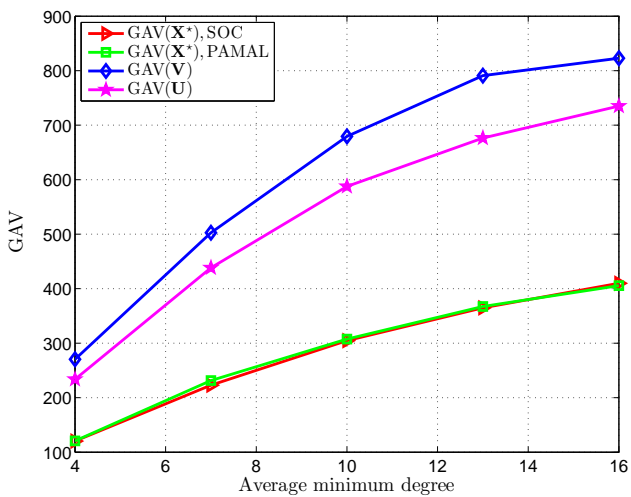}
\caption{Average absolute total variation versus the average minimum degree according to alternative GFT definitions for undirected scale-free graphs with $N=20$ nodes.}\label{fig:figMouramed}
\end{figure}

\noindent \textbf{Complexity issues.}
Clearly, looking at both SOC and PAMAL methods, complexity is a non trivial issue which deserves further investigations, especially when the size of the graph increases. To get an idea of computing time,  in Fig. \ref{fig:figexec}
we report the execution time of both SOC and PAMAL algorithms, as a function of the number of vertices in the graph.
The results have been obtained
running a non-compiled  Matlab program, with no optimization of the parameters involved, by setting $\rho^{1}=\beta=20$. The program ran on a laptop
having a processor Intel Core i7-4500, CPU 1.8, 2.4 GHz. The graphs under test were generated as geometric random graphs with equal percentage of directed links as $N$ increases.

 \begin{figure}[!] \centering
\includegraphics[width=3.3in,height=2.6in]{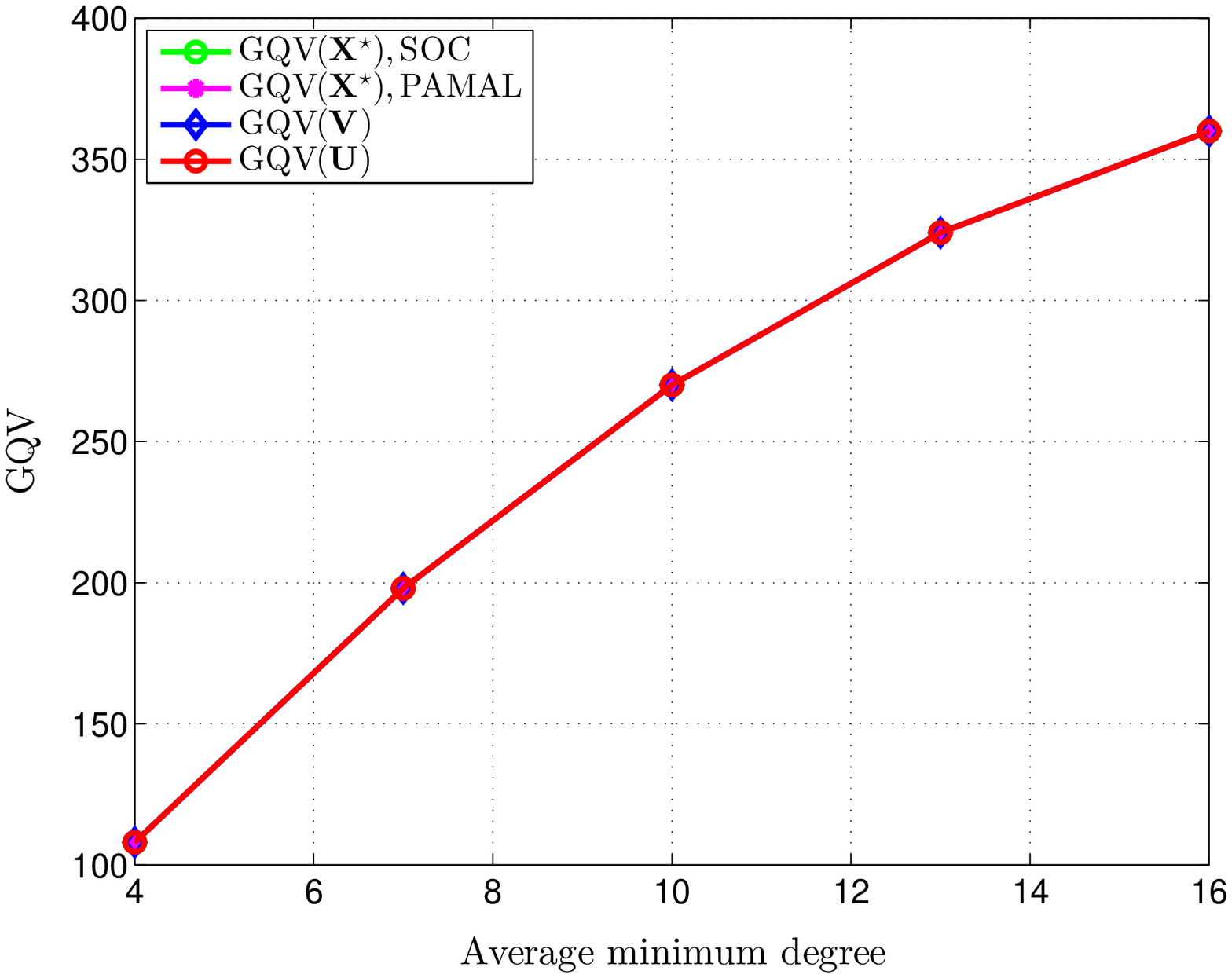}
\caption{Average $\text{GQV}$ versus the average minimum degree according to alternative GFT definitions for undirected scale-free graphs with $N=20$ nodes.}\label{fig:fig6new}
\end{figure}

\noindent \textbf{Examples with real networks.}
As an application to real graphs, in Fig. \ref{fig:figmazzini}  we considered the directed  graph
obtained from the street map of Rome, incorporating the true directions of traffic lanes in the area around Mazzini square. The graph is composed of $239$ nodes. Even though, the scope of this paper is to propose a method to build a GFT basis, so that we do not dig further into applications, this an example that has interesting applications of GSP. The problem in this case is to build a map of vehicular traffic in a city, starting from a subset of measurements collected along road side units or sent by cars equipped with ad hoc equipment. The problem can be interpreted as the reconstruction of the entire graph signal from a subset of samples and then it builds on graph sampling theory \cite{Tsit_Barb_PDL}.
In Fig. \ref{fig:fdirsquare}  we report some basis vectors obtained by using Algorithm $2$ with $\rho^1=10$. We can observe that the basis vectors highlight clusters, while capturing the edges' directivity.

\noindent \textbf{Balanced total  variation.} In some cases, the solution of the total variation problem in (\ref{prob_main}) can cut the graph in subsets of very different  cardinality. As an extreme case, it may be not uncommon to have a subset composed of only one node and the other set containing all the rest of the network. To prevent such a behavior,  Algorithm \ref{algorithm:Alg_balanced}
aims at minimizing the   balanced total variation. An example of its application   to the graph of Fig. \ref{fig:figmazzini} is reported in Fig. \ref{fig:fdirsquarebal1}, where we show  some basis vectors computed using Algorithm  \ref{algorithm:Alg_balanced}. Comparing these vectors with the corresponding ones obtained with PAMAL algorithm, see, e.g. Fig. \ref{fig:fdirsquare}, we can see how clusters of single nodes are now avoided.\vspace{-0.2cm}

\begin{figure}[!]
\centering
               \includegraphics[width=3.3in,height=2.6in]{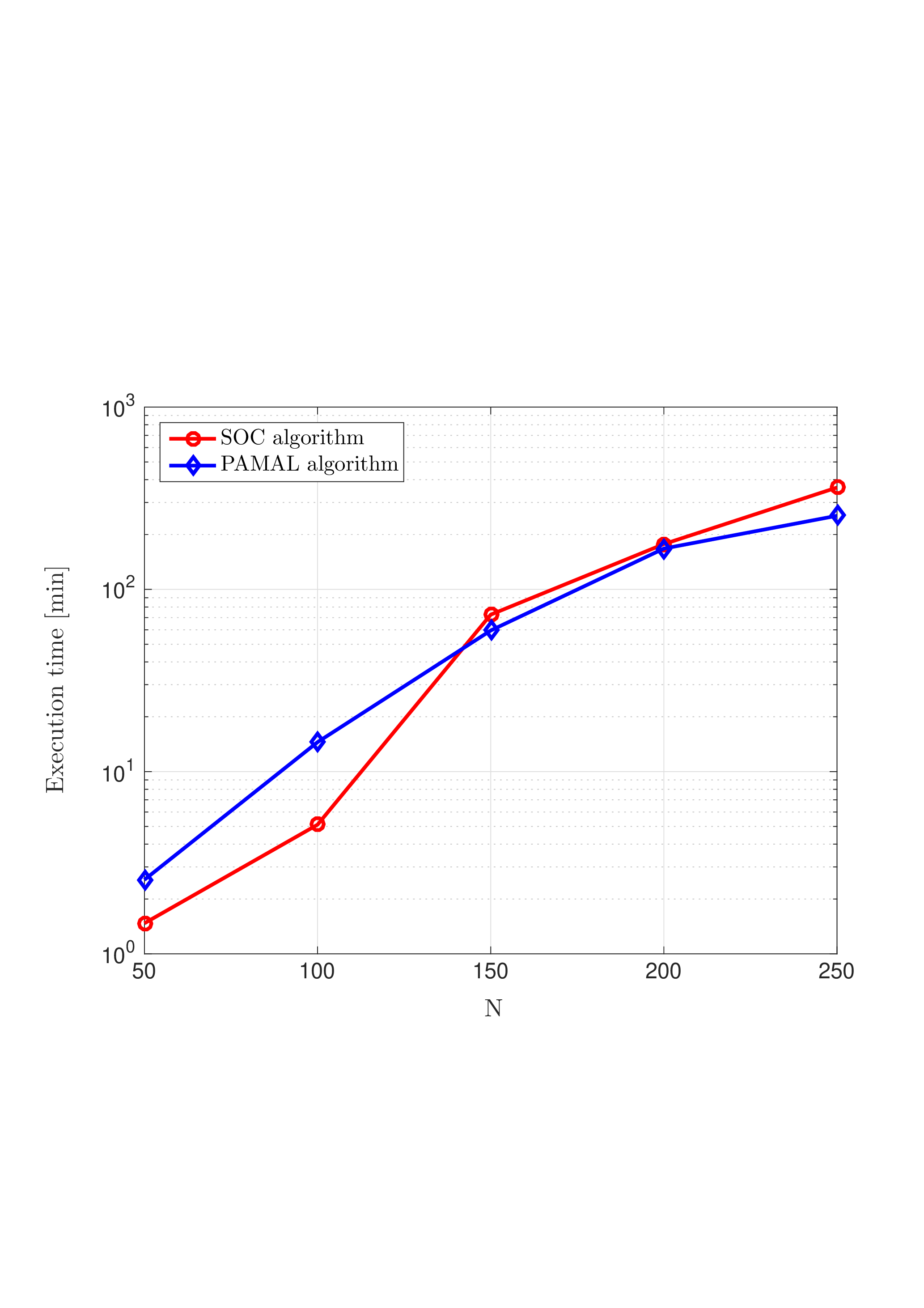}
          \caption{Execution time vs. the number of nodes for RGGs with $25\%$ of directed links and $\beta=\rho^1=20$.}\label{fig:figexec}
\end{figure}

\begin{figure}[!]
    \centering
           \includegraphics[width=3.5in,height=3.42in,clip=true]{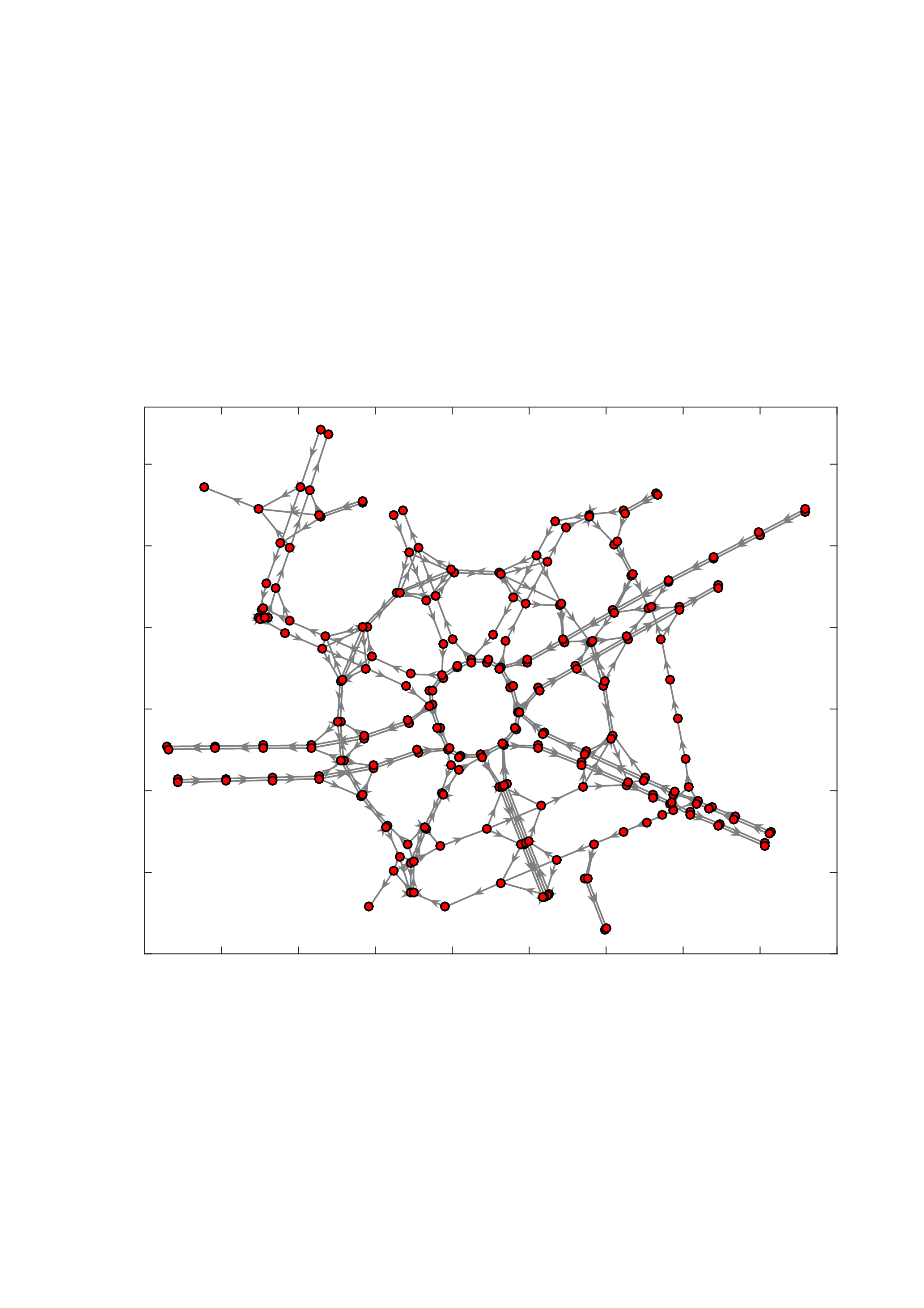}\vspace{0.6cm}
        \caption{Directed graph associated to street map of Rome (Piazza Mazzini).}\label{fig:figmazzini}
\end{figure}

\section{Conclusion}
\label{section:conclusions}
In this paper we have proposed an alternative approach to build an orthonormal basis for the Graph Fourier Transform (GFT). The approach considers the general case of a directed graph and then it includes the undirected case as a particular example. The search method starts from the identification of an objective function and then looks for an orthonormal basis that minimizes that function. More specifically, motivated by the need to detect clustering behaviors in graph signals, we chose as objective function the cut size. We showed that this approach leads, without loss of optimality, to the minimization of a function that represents a directed total variation of graph signals, as it captures the edges' directivity. Interestingly, in case of undirected graphs, this function converts into an $\ell_1$-norm total variation, which represents the graph (discrete) counterpart of the $\ell_1$-norm total variation that plays a key role in the classical Fourier Transform of continuous-time signals \cite{Mallat}. We compared our basis vectors with the eigenvectors of either the Laplacian or adjacency matrix, assuming as performance metric either our graph absolute variation or the graph quadratic variation. As expected, our method outperforms the other methods when using the absolute variation, as it is built by minimizing that metric. However, what has been interesting to see was that our basis performs as well as the alternative basis when we assumed as performance metric the graph quadratic variation.
Before concluding, we wish to point out that, as always, our alternative approach to build a GFT basis has its own merits and shortcomings when compared to alternative approaches. For example, having restricted the search to the real domain, differently from available methods, our method fails to find  the complex exponentials as the GFT basis in the case of circular graphs. Furthermore, other methods like the ones in \cite{Moura2013} starting from the identification of the adjacency matrix as the shift operator, are more suitable than our approach to devise a filtering theory over graphs.\vspace{-0.2cm}

\begin{figure*}
\centering
\begin{subfigure}[b]{0.49\textwidth}\hspace{-1.5cm}
\centering
               \includegraphics[width=9.4cm,height=7.0cm,bb=50 226 542 600]{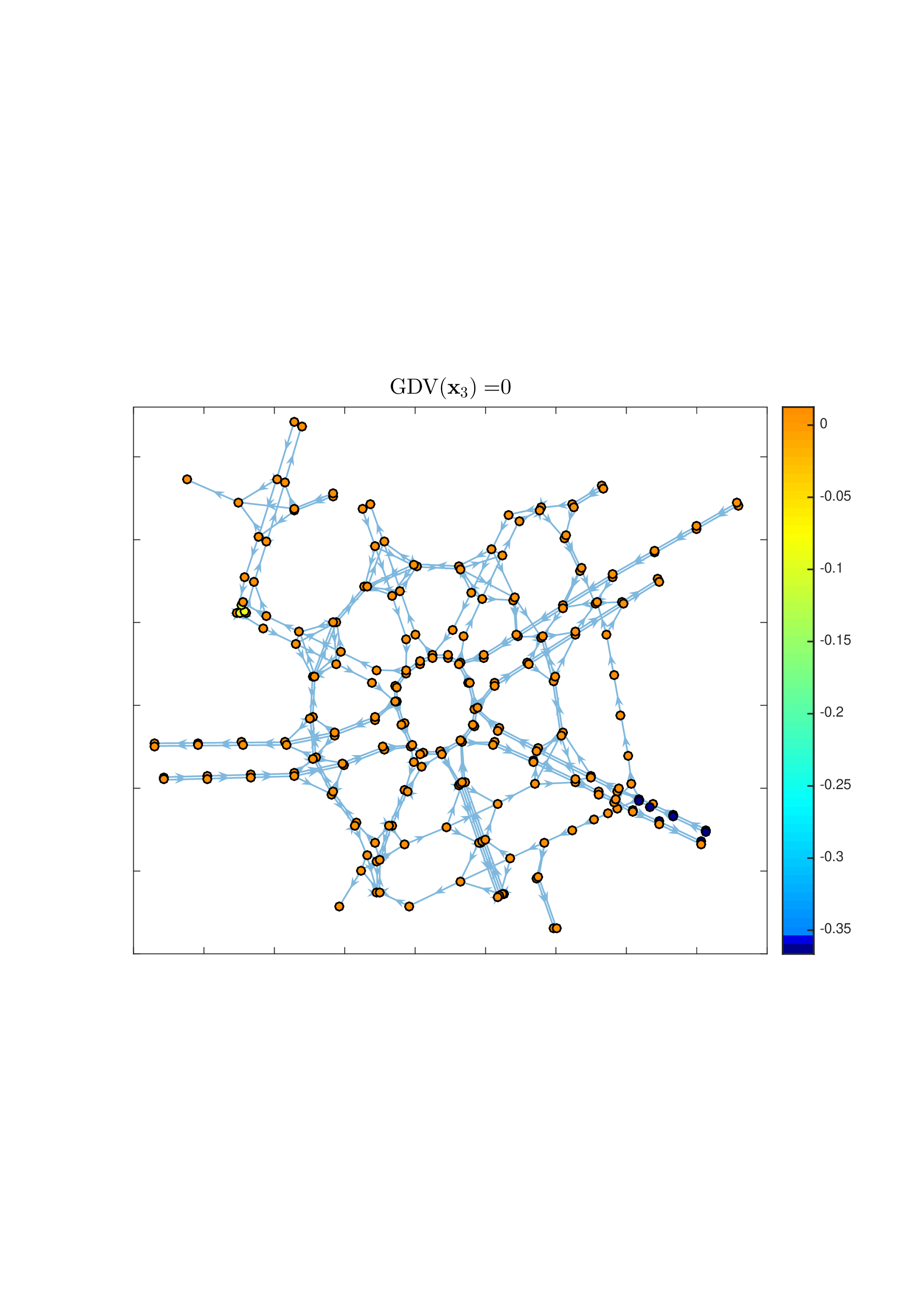}
     \end{subfigure}
\begin{subfigure}[b]{0.5\textwidth}
\centering
              \includegraphics[width=9.4cm,height=7.0cm,scale=1,bb=35 226 542 600]{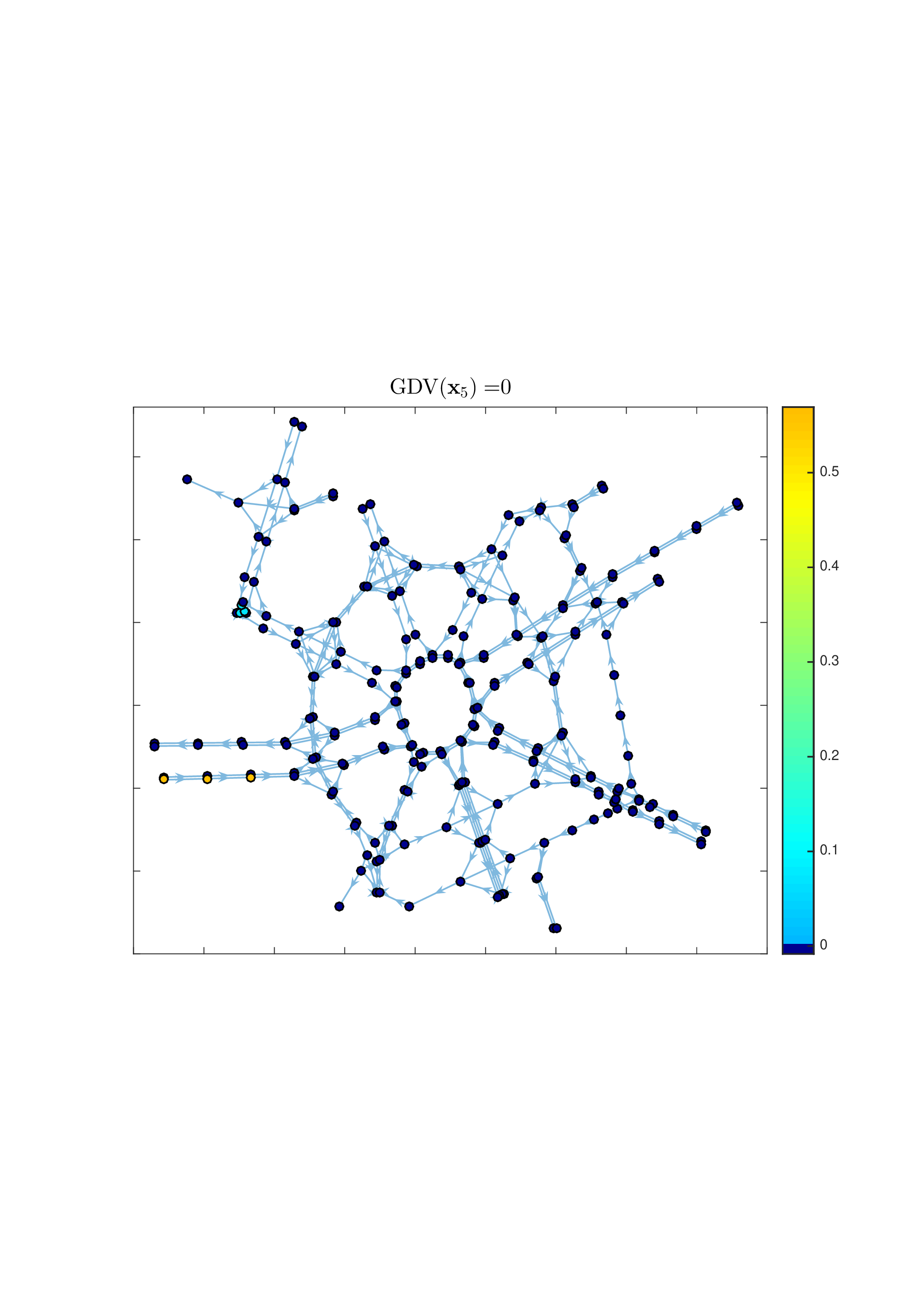}
\end{subfigure}
       \vskip \baselineskip
\begin{subfigure}[b]{0.49\textwidth}\hspace{-1.5cm}
\centering
               \includegraphics[width=9.4cm,height=7.0cm,bb=50 226 542 600]{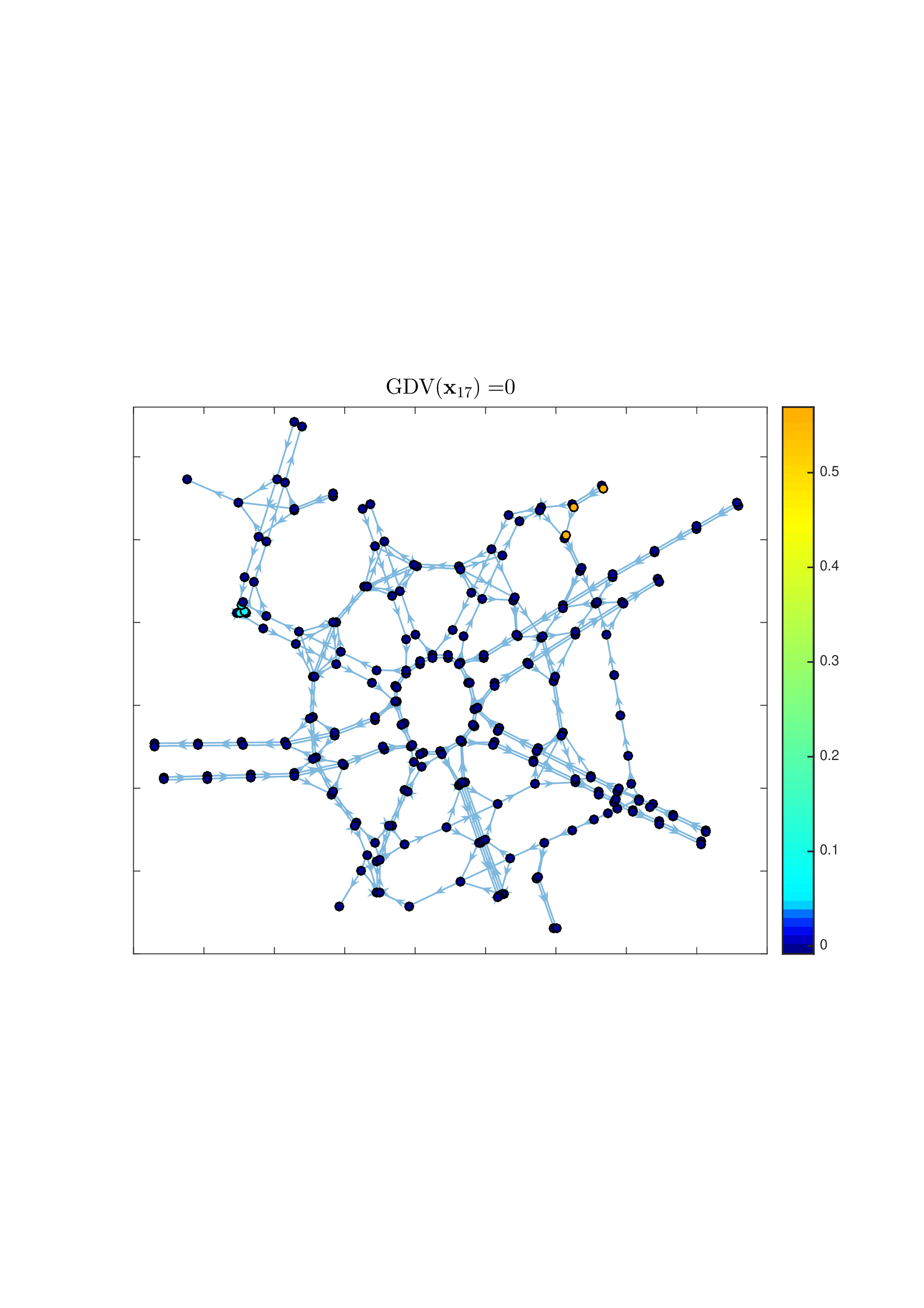}
      \end{subfigure}
\begin{subfigure}[b]{0.5\textwidth}
\centering
              \includegraphics[width=9.4cm,height=7.0cm,scale=1,bb=35 226 542 600]{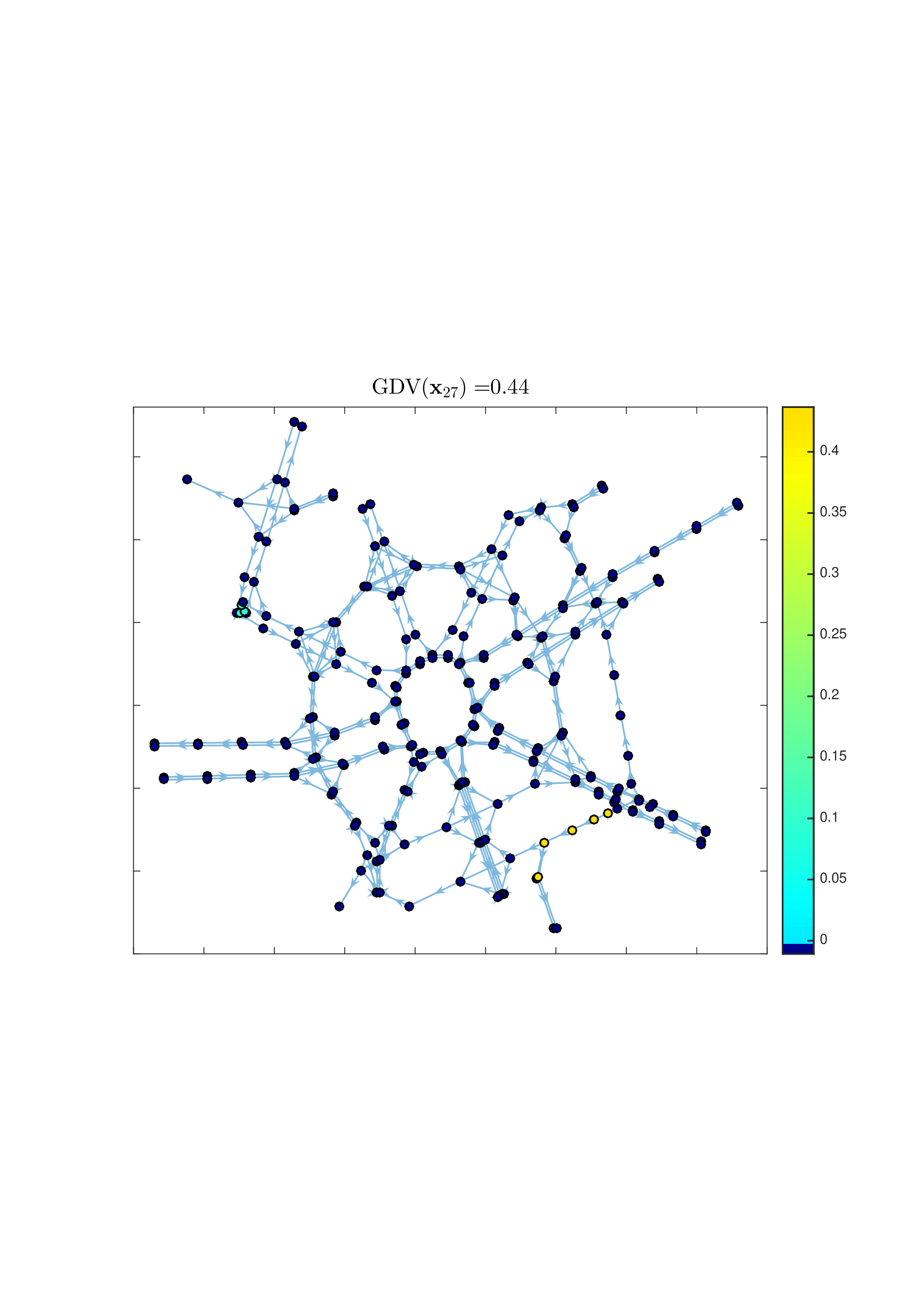}
\end{subfigure}
 \vskip \baselineskip
\begin{subfigure}[b]{0.49\textwidth}\hspace{-1.5cm}
\centering
               \includegraphics[width=9.4cm,height=7.0cm,bb=50 226 542 600]{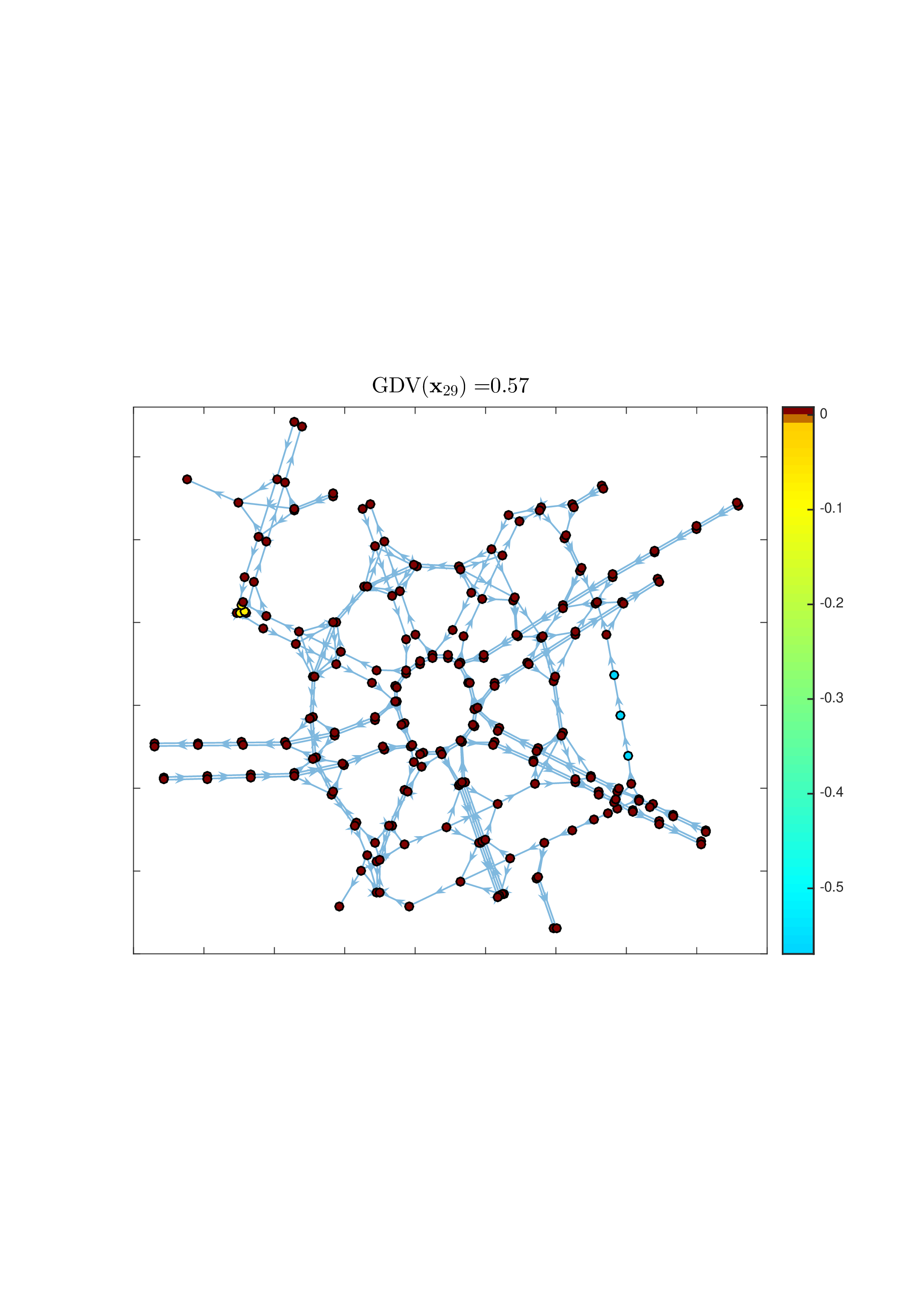}
      \end{subfigure}
\begin{subfigure}[b]{0.5\textwidth}
\centering
              \includegraphics[width=9.4cm,height=7.0cm,scale=1,bb=35 226 542 600]{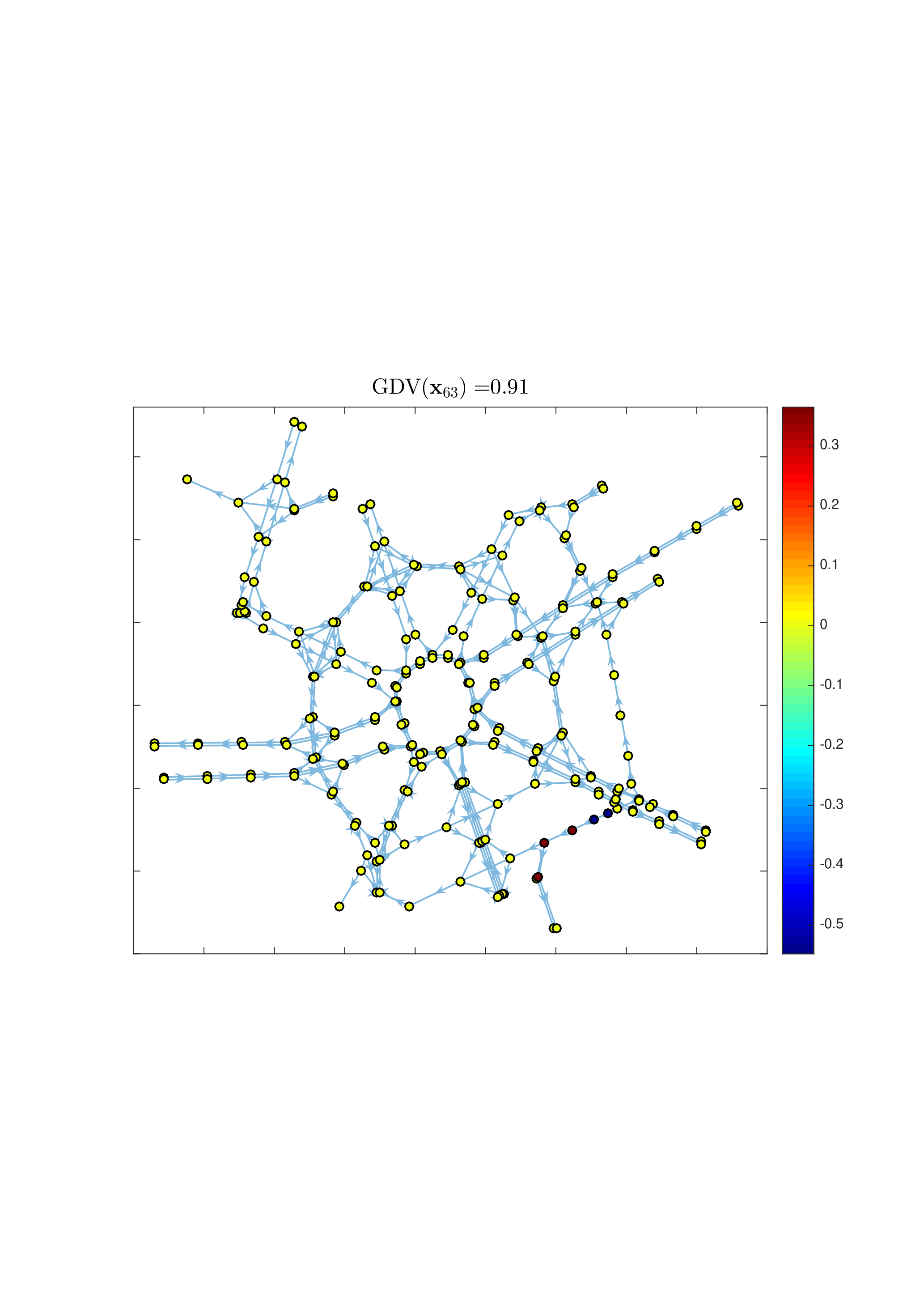}
\end{subfigure}\vspace{0.5cm}
\caption{Optimal basis vectors $\mx_k$, $k=3,5,17,27,29,63$ for Algorithm $2$ and the graph in Fig. \ref{fig:figmazzini}.}\label{fig:fdirsquare}
\end{figure*}

\begin{figure*}
\centering
\begin{subfigure}[b]{0.49\textwidth}\hspace{-1.5cm}
\centering
               \includegraphics[width=9.4cm,height=7.0cm,bb=50 226 542 600]{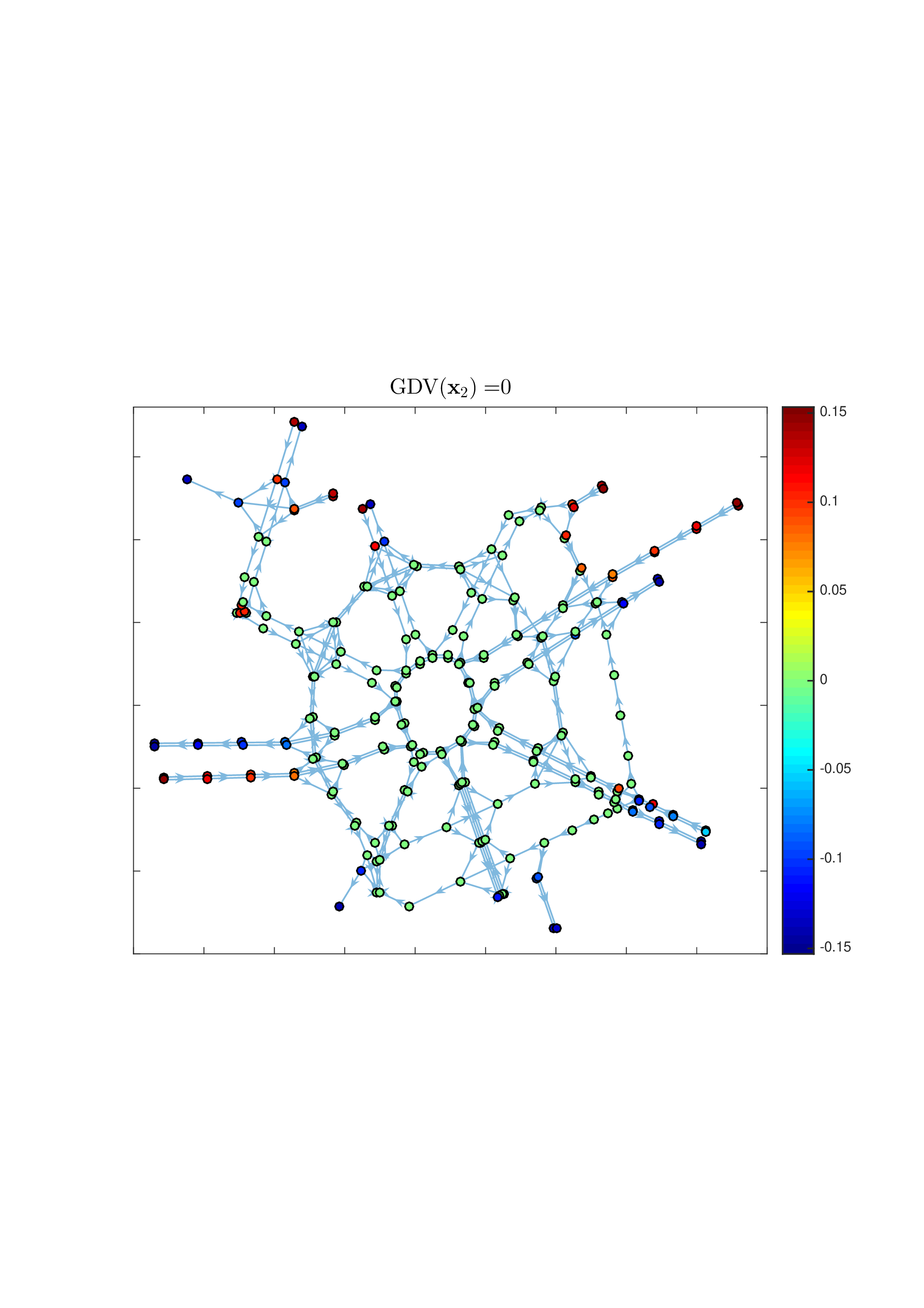}
     \end{subfigure}
\begin{subfigure}[b]{0.5\textwidth}
\centering
              \includegraphics[width=9.4cm,height=7.0cm,bb=35 226 542 600]{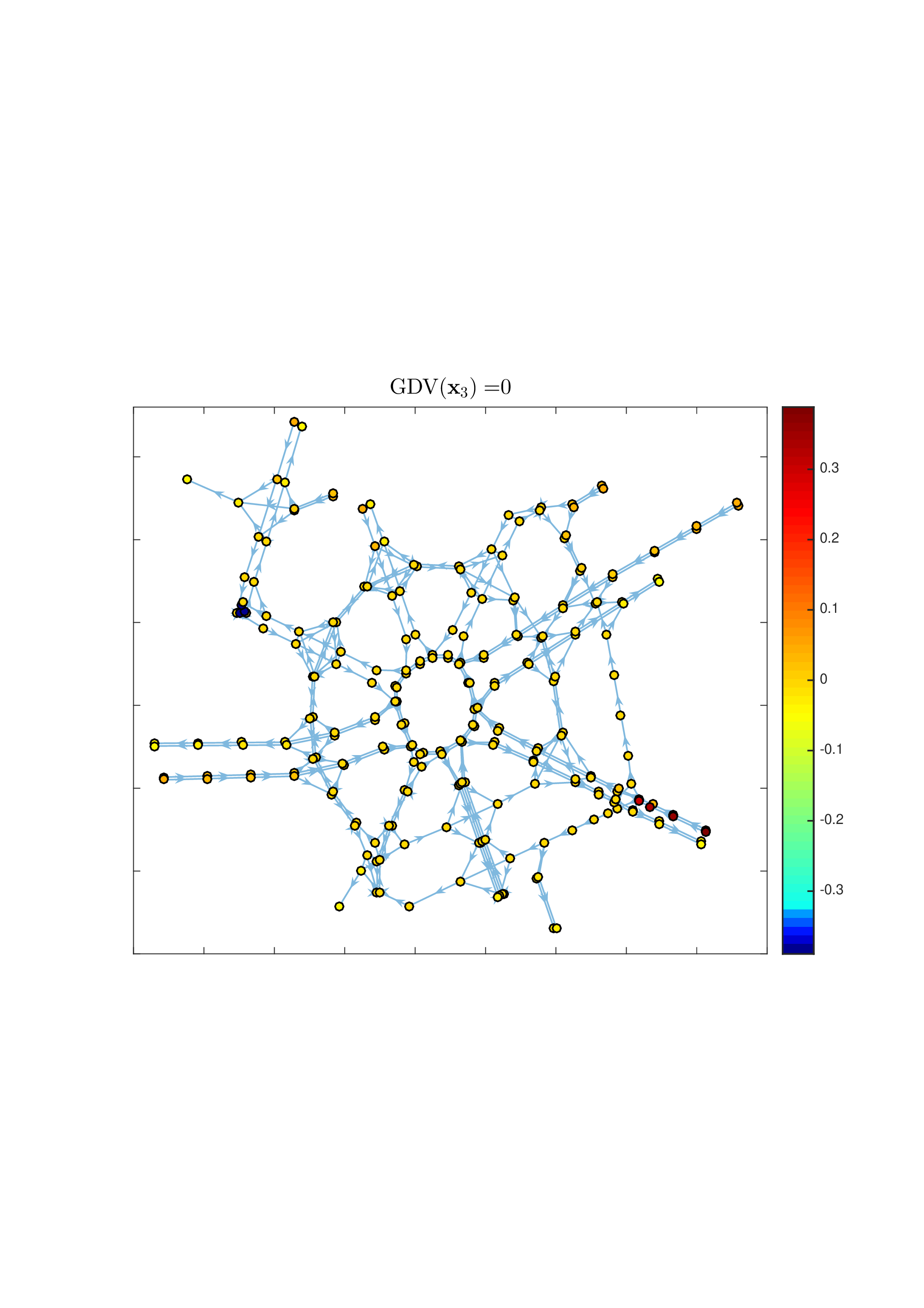}
\end{subfigure}
       \vskip \baselineskip
\begin{subfigure}[b]{0.49\textwidth}\hspace{-1.5cm}
\centering
               \includegraphics[width=9.4cm,height=7.0cm,bb=50 226 542 600]{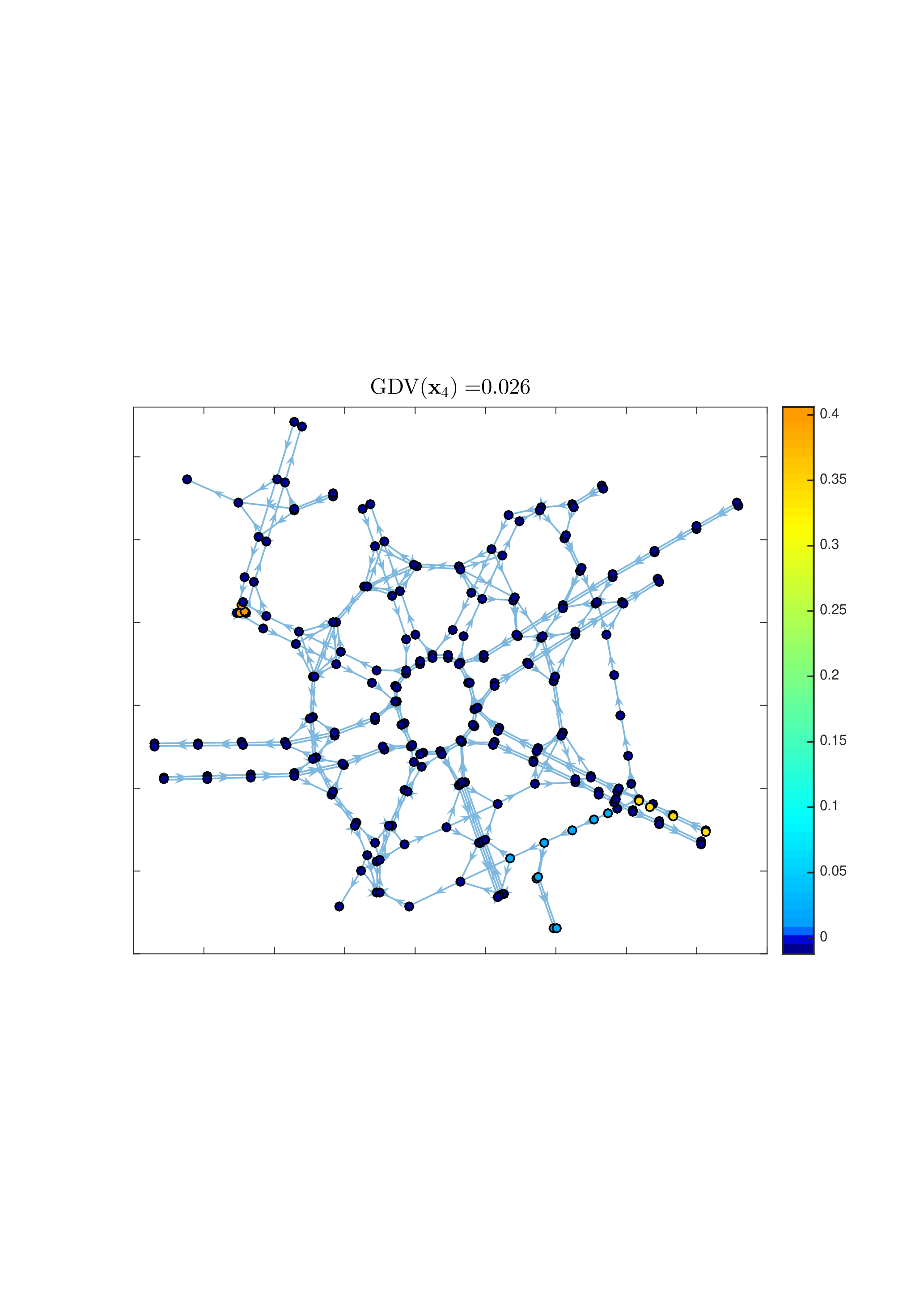}
      \end{subfigure}
\begin{subfigure}[b]{0.5\textwidth}
\centering
              \includegraphics[width=9.4cm,height=7.0cm,bb=35 226 542 600]{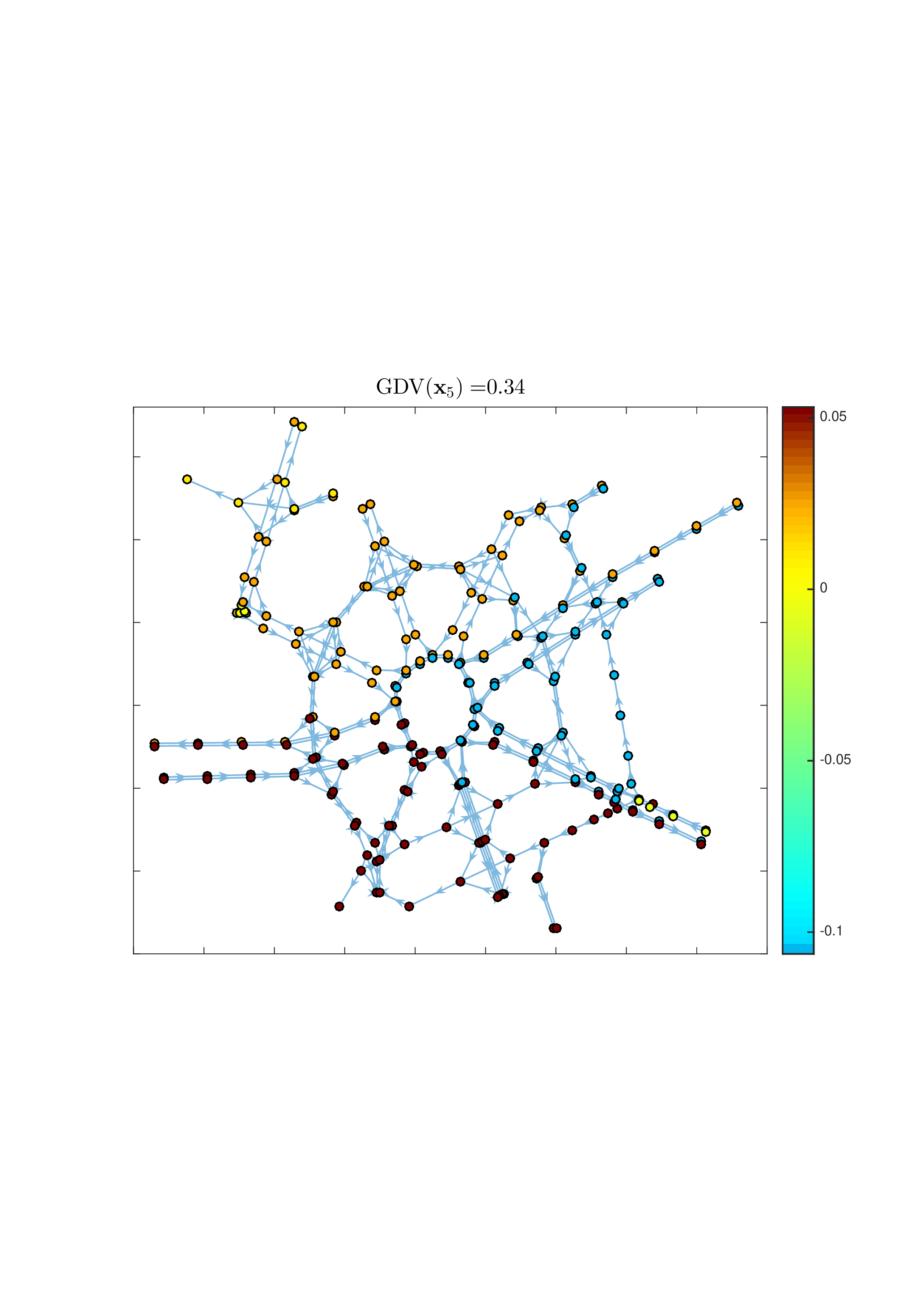}
\end{subfigure}
 \vskip \baselineskip
\begin{subfigure}[b]{0.49\textwidth}\hspace{-1.5cm}
\centering
               \includegraphics[width=9.4cm,height=7.0cm,bb=50 226 542 600]{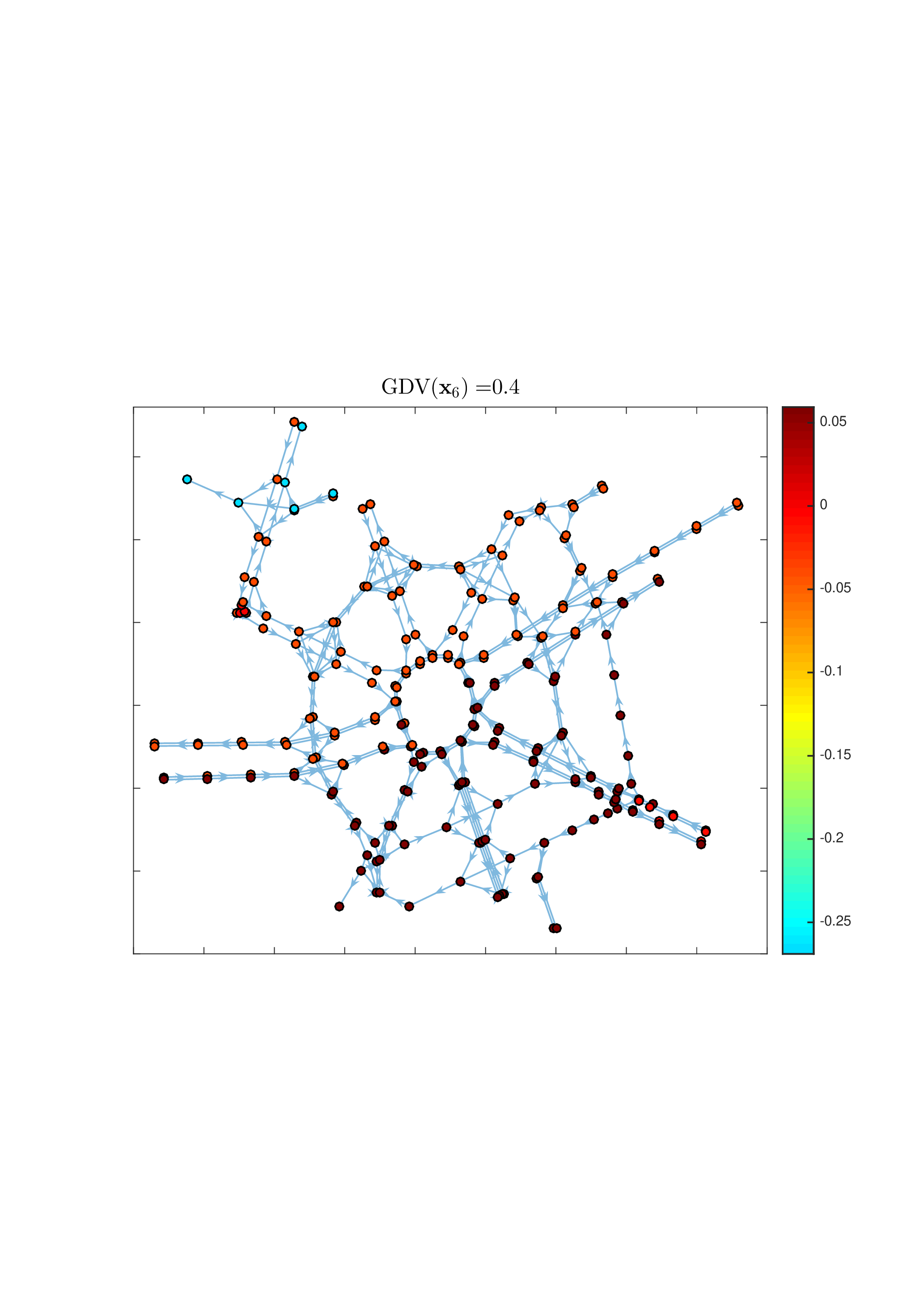}
      \end{subfigure}
\begin{subfigure}[b]{0.5\textwidth}
\centering
              \includegraphics[width=9.43cm,height=7.0cm,bb=34 226 542 600]{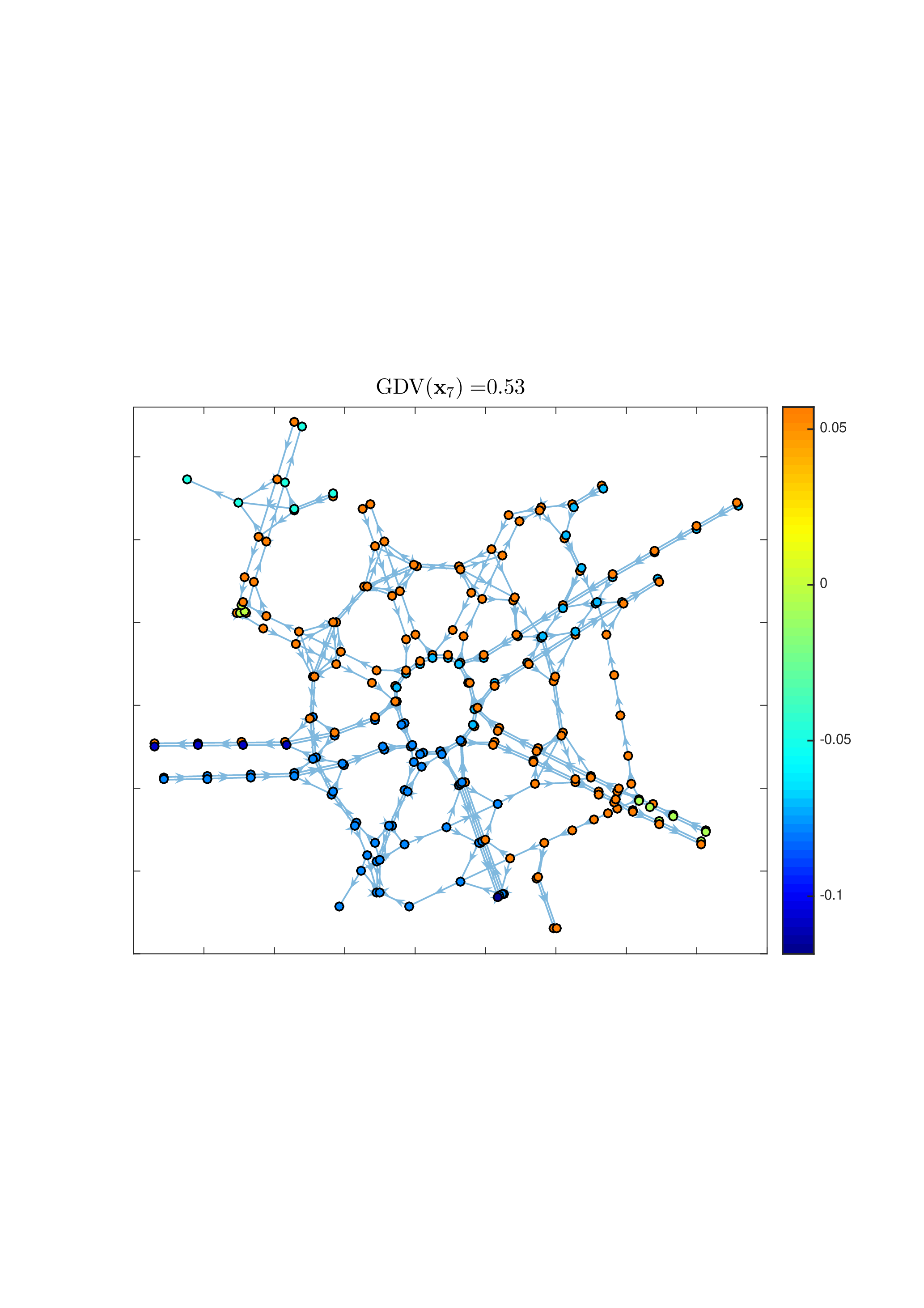}
\end{subfigure}
\vspace{0.2cm}
\caption{Optimal basis vectors $\mx_k$, $k=2,\ldots, 7$  for Algorithm $4$ and the graph in Fig. \ref{fig:figmazzini}.}\label{fig:fdirsquarebal1}
\end{figure*}

\appendix \vspace{-0.1cm}
\subsection{Closed-form solution for problem $\tilde{\mathcal{Q}}_{k,n}$}
\label{A:closed_form} \vspace{-0.1cm}
In this section we provide a closed-form solution for the non-convex problem $\tilde{\mathcal{Q}}_{k,n}$. This problem can
be equivalently written as \vspace{-0.3cm}
\beq
\hspace{-0.5cm}
\begin{split}
\label{sub_probs3}
\mP^{k,n} = \hspace{0.5cm} & \underset{\mP \in \mathbb{R}^{N \times N}}{{\rm arg}\min} \, \hspace{0.5cm} g_{k,n-1}(\mP) \\
 & \hspace{0.3cm} \text{s.t.} \hspace{1.1cm} \mP^T \mP= \mI
\end{split}
\eeq
where $g_{k,n-1}(\mP)\triangleq \langle \mLambda^k, \mP-\mX^{k,n-1} \rangle+\frac{\rho^k}{2}\| \mP-\mX^{k,n-1}\|^{2}_{F} + \frac{c_2^{k,n-1}}{2} \parallel \mP-\mP^{k,n-1}\parallel^{2}_{F}$.
Our proof consists of two steps: i) first, we find
the stationary solutions by solving the KKT necessary conditions; ii) then, we prove that the resulting closed-form solution is a global minimum of the non-convex problem   (\ref{sub_probs3}).
The Lagrangian function $\mathcal{L}_{P}$ associated to (\ref{sub_probs3}) can be written as \vspace{-0.3cm}
\beq
\begin{split}
\mathcal{L}_{P}=& \langle \mLambda^k, \mP-\mX^{k,n-1} \rangle+\frac{\rho^k}{2}\| \mP-\mX^{k,n-1}\|^{2}_{F}\\ & + \frac{c_2^{k,n-1}}{2} \parallel \mP-\mP^{k,n-1}\parallel^{2}_{F}+ \langle \mLambda_1, \mP^T \mP-\mI \rangle
\end{split}\vspace{-0.2cm}
\eeq
where $\mLambda_1 \in \mathbb{R}^{N \times N}$ is the multipliers' matrix associated to the orthogonality constraint.
The KKT conditions become then\vspace{-0.01cm}
\beq\vspace{-0.01cm}
\label{KKT}
\begin{array}{ll}
\begin{split}
\text{a)} \; \nabla_{P}\mathcal{L}_{P}= &\mP[ \mI (\rho^k+c_2^{k,n-1})+2 \mLambda_1]-c_2^{k,n-1} \mP^{k,n-1}\\ &\! -\rho^k \mX^{k,n-1}+\mLambda^k=\mathbf{0}, \end{split}\medskip\\
\;\text{b)}\;  \mLambda_1  \perp  \mP^T \mP-\mI=\mathbf{0}\vspace{-0.2cm}
\end{array}
\eeq
where we  chose  $\mLambda_1=\mLambda_1^T$.
Hence,  defining $\mB \triangleq \mI+2 \mLambda_1/(\rho^k+c_2^{k,n-1})$, from equation a) one gets:\vspace{-0.01cm}
\beq
\label{PBW}
\mP \mB=\mF
\eeq
with $\mF\triangleq \ds \frac{c_2^{k,n-1} \mP^{k,n-1}+\rho^k \mX^{k,n-1}-\mLambda^k}{\rho^k+c_2^{k,n-1}}$.
Let $\mQ \mSigma \mT^T$ be the SVD decomposition of $\mF$.
From (\ref{PBW}), it turns out \vspace{-0.01cm}
\beq \label{PBW1}
\mP \mB=\mQ \mSigma \mT^T
\eeq
and,  using the orthogonality condition b) in (\ref{KKT}),
it holds
\beq
\mB^T \mB= \mT \mSigma^2 \mT^T \; \Rightarrow \; \mB= \mT \mSigma \mT^T.
\eeq
Therefore, replacing $\mB$ in (\ref{PBW1}), we get
\beq
\mP  \mT \mSigma \mT^T =\mQ \mSigma \mT^T \; \Rightarrow\; \mP= \mQ  \mT^T.
\eeq
It remains to prove that $\mP^{\star}=\mP^{k,n}= \mQ  \mT^T$ is  a global minimum for problem (\ref{sub_probs3}).
To this end, it is sufficient to show that
\beq
g_{k,n-1}(\mP^{\star})\leq g_{k,n-1}(\mP), \quad \; \forall\, \mP \, : \, \mP^T\mP=\mI
\eeq
i.e., using the equalities $\parallel \mP^{\star}\parallel_{F}^{2}=\parallel \mP\parallel_{F}^{2}=N$, we have to prove that
$\forall\, \mP \, : \, \mP^T\mP=\mI$, it results
\beq \label{ineq_to_pr}
\begin{split}
&\tr(\mP^{\star  T}(\mLambda^k-\rho^k \mX^{k,n-1}-c_2^{k,n-1}\mP^{k,n-1} )) \leq\\
&\tr(\mP^{T}(\mLambda^k-\rho^k \mX^{k,n-1}-c_2^{k,n-1}\mP^{k,n-1} )).
\end{split}
\eeq
Using the above definition of $\mF$,  (\ref{ineq_to_pr}) reduces to
\beq
\tr(\mP^{\star  T}\mF)  \geq
\tr(\mP^{T} \mF), \quad \; \forall\, \mP \, : \, \mP^T\mP=\mI
\eeq
and since $\mP^{\star}=\mQ  \mT^T$, the final inequality to hold true is
\beq \label{last_in}
\tr(\mSigma)  \geq
\tr(\mT^{T} \mP^T \mQ \mSigma), \quad \; \forall\, \mP \, : \, \mP^T\mP=\mI.
\eeq
Define $\mZ^T:=\mT^{T} \mP^T \mQ$ so that $\mZ^T \mZ=\mI$. Then, from (\ref{last_in}) we get
\beq \label{last_in1}
\tr(\mSigma)  \geq
\tr(\mZ^T \mSigma), \quad \; \forall\, \mZ \, : \, \mZ^T\mZ=\mI.
\eeq
This last inequality holds because $\Sigma_{ii}>0$ and $Z_{ii}  \leq  \mid Z_{ii}\mid  \;\;\leq 1$, $\forall i$, where the latter is implied by $\mZ^T\mZ=\mI$ \cite{Manton}.
Additionally, $Z_{ii}=1$, $\forall i$, if and only if $\mZ=\mI$,
so that the equality in (\ref{last_in1}) holds if and only if $\mZ=\mI$ or $\mP^{\star}=\mQ  \mT^T$.
\vspace{-0.3cm}

\subsection{Proof of Theorem \ref{thm:Th1}}
\label{B:proof Th1} \vspace{-0.1cm}
For lack of space, we omit here the details of the proof, which proceeds using similar arguments  as   in  the proof of  Proposition $2.5$ in \cite{Chen}.
However, to invoke this correspondence, we need to prove that the following properties hold true: i) the function $\mathcal{L}_k$ in (\ref{Lagrange}) satisfies the Kurdyka-{\L}ojasiewicz (K-{\L}) property;
ii) $\mathcal{L}_k$ is  a coercive function.
 To prove point i), let us first introduce some definitions \cite{Bochnak}.
 \begin{definition} \label{semialg_set}
 A semi-algebraic subset of $\mathbb{R}^n$ is a finite union of sets of the  form
 \beq
 \begin{split}
 \{ \bx \in \mathbb{R}^n  : P_1(\bx)=0,\ldots,& P_k(\bx)=0, \\  & Q_1(\bx)>0,\ldots, Q_l(\bx)>0\}
 \end{split}
 \eeq
 where $P_1,\ldots, P_k$ and $Q_1,\ldots, Q_l$ are polynomial in $n$ variables.
 \end{definition}
 \begin{definition} \label{semialg_func}
 A function $f\, : \, \mathbb{R}^n \rightarrow \mathbb{R}$ is said to be semi-algebraic if
 its graph, defined as $\text{gph} f :=\{ (\bx, f(\bx)) | \; \bx \in \mathbb{R}^n\} $, is a semi-algebraic set.
 \end{definition}
It is shown  [\, cf. \cite{Bolte}, Th. $3$] that the semi-algebraic functions satisfy the K-{\L} property.
\begin{definition}
 A function $\phi(\bx)$ satisfies the Kurdyka-{\L}ojasiewicz (K-{\L}) property
 at point $\bar{\bx} \in \text{dom}(\partial \phi)$ if there exists $\theta \in [0,1)$ such that
 \beq
 \ds \frac{|\phi(\bx)-\phi(\bar{\bx})|^{\theta}}{\text{dist}(\mathbf{0}, \partial \phi(\bx))}
 \eeq
 is bounded around $\bar{\bx}$.
 \end{definition}

The global convergence of the PAM method established in \cite{Attouch}
requires the objective function to satisfy the K-{\L} property.
Define $\mW:=(\mX,\mP)$ and
consider the function $\mathcal{L}_k$ in (\ref{Lagrange}), i.e. \vspace{-0.3cm}
\beq \label{Lagrange1}
 \mathcal{L}_k(\mW)=\mathcal{L}(\mX,\mP,\mLambda^k; \rho^k)=f_1(\mX) + f_2(\mP)+g_k(\mX,\mP)
 \eeq
 where $f_1(\mX)=\text{GDV}(\mX)$, $f_2(\mP)=\delta_{\mathcal{S}_t}(\mP)$ and $g_k(\mX,\mP)=\langle \mLambda^k, \mP-\mX \rangle+\frac{\rho^k}{2}\| \mP-\mX\|^{2}_{F}$.
 Observe that $f_1(\mX)=\ds \sum_{i,j=1}^{N} a_{ji}\text{max}(x_i-x_j,0)$ is the weighted sum of  the functions
 $f_{ij}(x_i,x_j)=\text{max}(x_i-x_j,0)$.
 Being a finite sum of semi-algebraic functions also a semi-algebraic function, it is sufficient to show that $f_{ij}$
 is semi-algebraic.
 Assume, w.l.o.g. $y_{ij}=x_i-x_j$ so that $z=f_{ij}(y_{ij})=\text{max}(y_{ij},0)$. The graph of $f_{ij}$ becomes \vspace{-0.1cm}
 \beq \nonumber
\text{gph} f_{ij}\!\!=\!\{ (y_{ij},z)  :  z=y_{ij}, y_{ij}\geq 0 \} \cup \{ (y_{ij},z)  : z=0, y_{ij}\leq 0 \}
 \eeq
 and according to Definition \ref{semialg_set} it is a semi-algebraic set. Then $f_1(\mX)$ as sum of semi-algebraic functions is also semi-algebraic.
 Since $f_2(\mP)$ and $g_k(\mX,\mP)$ are  semi-algebraic functions it follows that $\mathcal{L}_k(\mW)$ is also semi-algebraic.
 It remains to prove point ii) to assess that $\mathcal{L}_k$ is a coercive function, i.e. $\mathcal{L}_k(\mW)\rightarrow \infty$
 when $\|\mW\|_{\infty} \rightarrow \infty$.
 Clearly, the term $f_2(\mP)$ is coercive. The remaining terms in  (\ref{Lagrange1}) can be written as \vspace{-0.1cm}
 \beq \nonumber
 \begin{split}
 f_1(\mX) + g_k(\mX,\mP)\!=\text{GDV}(\mX)+ \ds \frac{\rho^k}{2} \langle \mX, \mX  \rangle \!-\!\langle \rho^k \mP+\mLambda^k, \mX  \rangle\\
 \!\!+ \langle \mLambda^k, \mP\rangle+\ds \frac{\rho^k}{2} \parallel \mP\parallel_{F}^{2}.
 \end{split}
 \eeq
 Since $\mP \in \mathcal{S}_t$ it holds $\parallel \mP\parallel_{F}^{2}=N$.
 Thus, from the inequalities $\langle \mA,\mB \rangle \geq - \parallel \mA \parallel_{F} \parallel \mB \parallel_{F}$ and
 $\parallel \mB \parallel_{F} \leq \parallel \mB \parallel_{1}$, it holds $\langle \mLambda^k, \mP\rangle\geq  - \sqrt{N} \parallel \mLambda^k \parallel_{1}$, so that one gets \vspace{-0.1cm}
 \beq \nonumber
 \begin{split}
 f_1(\mX) + g_k(\mX,\mP)\geq \text{GDV}(\mX)+ \ds \frac{\rho^k}{2} \langle \mX, \mX  \rangle-
  \rho^k \parallel  \mX \parallel_{1}\\ -\langle \mLambda^k, \mX  \rangle
 - \sqrt{N} \parallel \mLambda^k \parallel_{1} +\ds \frac{\rho^k N}{2}
\end{split}
 \eeq
 where we used the inequality $\langle \rho^k \mP, \mX  \rangle \leq \rho^k \parallel  \mX \parallel_{1}$.
 Observe that the sequence $\{\rho^k\}_{k \in \mathbb{N}}$ is non-decreasing when $\gamma>1$ so that $\rho^k>\rho^1$.
 Then the function $f_1(\mX) + g_k(\mX,\mP)$ is coercive being   $\text{GDV}(\mX)+ \ds \frac{\rho^k}{2} \langle \mX, \mX  \rangle$
 a positive  function.

\ifCLASSOPTIONcaptionsoff
  \newpage
\fi
\bibliographystyle{IEEEtran}
\bibliography{main}

\end{document}